\documentclass[11pt]{amsart}
\usepackage{amssymb,amsmath,amsfonts,amscd,euscript}

\newcommand{\nc}{\newcommand}

\numberwithin{equation}{section}
\newtheorem{thm}{Theorem}[section]
\newtheorem{prop}[thm]{Proposition}
\newtheorem{lem}[thm]{Lemma}
\newtheorem{conj}[thm]{Conjecture}
\newtheorem{cor}[thm]{Corollary}
\theoremstyle{remark}
\newtheorem{rem}[thm]{Remark}
\newtheorem{definition}[thm]{Definition}
\newtheorem{example}[thm]{Example}
\newtheorem{dfn}[thm]{Definition}

\nc{\gl}{\mathfrak{gl}}
\nc{\GL}{\mathfrak{GL}}
\nc{\g}{\mathfrak{g}}
\nc{\gh}{\widehat\g}
\nc{\h}{\mathfrak{h}}
\nc{\la}{\lambda}
\nc{\al}{\alpha }
\nc{\be}{\beta }
\nc{\ve}{\varepsilon }
\nc{\om}{\omega }

\nc{\ta}{\theta}
\nc{\veps}{\varepsilon}
\nc{\ch}{{\mathop {\rm ch}}}
\nc{\Gr}{{\mathop {\rm Gr}}}
\nc{\Tr}{{\mathop {\rm Tr}\,}}
\nc{\Id}{{\mathop {\rm Id}}}
\nc{\ad}{{\mathop {\rm ad}}}
\nc{\bra}{\langle}
\nc{\ket}{\rangle}
\nc{\x}{{\bf x}}
\nc{\bs}{{\bf s}}
\nc{\bp}{{\bf p}}
\nc{\bc}{{\bf c}}
\nc{\pa}{\partial}
\nc{\ld}{\ldots}
\nc{\cd}{\cdots}
\nc{\hk}{\hookrightarrow}
\nc{\T}{\otimes}
\newcommand{\bea}{\begin{equation}}
\newcommand{\ena}{\end{equation}}
\nc{\gr}{\mathrm{gr}}
\nc{\ov}{\overline}

\nc{\cO}{\mathcal O}
\nc{\cF}{\mathcal F}
\nc{\cL}{\mathcal L}
\nc{\msl}{\mathfrak{sl}}
\nc{\msp}{\mathfrak{sp}}
\nc{\mgl}{\mathfrak{gl}}
\nc{\U}{\mathrm U}
\nc{\V}{\EuScript V}
\nc{\bH}{\EuScript H}
\nc{\Res}{\mathrm{Res\ }}

\newcommand{\bC}{{\mathbb C}}
\newcommand{\bZ}{{\mathbb Z}}

\newcommand{\bP}{{\mathbb P}}
\newcommand{\bG}{{\mathbb G}}
\newcommand{\BA}{{\mathbb A}}

\newcommand{\fa}{{\mathfrak a}}
\newcommand{\fg}{{\mathfrak g}}
\newcommand{\fh}{{\mathfrak h}}
\newcommand{\fb}{{\mathfrak b}}

\newcommand{\fn}{{\mathfrak n}}

\newcommand{\Fl}{\EuScript{F}}

\begin{document}

\title{Degenerate affine Grassmannians and loop quivers}
\author{Evgeny Feigin, Michael Finkelberg, Markus Reineke}
\address{Evgeny Feigin:\newline
Department of Mathematics,\newline National Research University Higher School of Economics,\newline
Russia, 117312, Moscow, Vavilova str. 7\newline
{\it and }\newline
Tamm Department of Theoretical Physics,
Lebedev Physics Institute
}
\email{evgfeig@gmail.com}
\address{Michael Finkelberg:\newline
National Research University Higher School of Economics,\newline
Russia, 117312, Moscow, Vavilova str. 7; \newline 
Institute for Information Transmission Problems of RAS, \newline
Russia, 127051, Moscow, Bolshoj Karetnyj per. 19}
\email{fnklberg@gmail.com}
\address{Markus Reineke:\newline
Fachbereich C - Mathematik, Bergische Universit\"at Wuppertal, D - 42097 Wuppertal}
\email{mreineke@uni-wuppertal.de}

\begin{abstract}
We study the connection between the affine degenerate Grassmannians in type $A$, quiver Grassmannians for one vertex loop quivers and affine
Schubert varieties. We give an explicit description of the degenerate affine Grassmannian of type $GL_n$ and identify it
with semi-infinite orbit closure of type $A_{2n-1}$. We show that principal quiver Grassmannians for the one vertex loop quiver
provide finite-dimensional approximations of the degenerate affine Grassmannian. Finally, we give an explicit
description of the degenerate affine Grassmannian of type $A_1^{(1)}$,  propose
a conjectural description in the symplectic case and discuss the generalization to the case of the affine degenerate flag varieties.

\end{abstract}

\maketitle

\section*{Introduction}
Let $G$ be a simple Lie group and let $G/B$ be the flag variety attached to $G$. These varieties enjoy many nice properties;
in particular, they are spherical, i.e.~the Borel subgroup acts on a flag variety with an open orbit. The varieties
$G/B$ can be degenerated in such a way that the action of the Borel subgroup degenerates (modulo torus) into an action of the abelian
unipotent group of the same dimension, acting with an open orbit on the degenerate flag variety (see\cite{F1}, \cite{F2}).
The construction is of Lie theoretic nature and uses the theory of highest weight $G$-modules.

It was shown in \cite{CFR1} that in type $A$ the degenerate flag varieties are closely related to the representation
theory of the equioriented quivers of type $A$. More precisely, the degeneration of $SL_n/B$ is isomorphic to a certain quiver
Grassmannian of subrepresentations of a direct sum of an injective and projective representations. This observation, on the one hand,
allows to use the representation theory of quivers in order to study the geometry of the degenerate flag varieties, and,
on the other hand, produces links between Lie theory and quiver Grassmannians.

Both the degenerate flag varieties and the quiver Grassmannians are known to be related to the theory of Schubert varieties. More precisely,
one can identify certain quiver Grassmannians and degenerate flag varieties with certain Schubert varieties (see \cite{CL}, \cite{G}, \cite{PRS}).
The main question we ask in this paper is to what extent one can generalize the degeneration constructions above to the case of the affine Lie
algebras and affine Lie groups? Is there a connection with the theory of quivers and with Schubert varieties in this case?
We give partial answers to the questions above. In short, the affine story is rather complicated (even in type A);
the correct replacement of the equioriented type $A$ quiver is the (equioriented) cycle quiver.

In our paper we concentrate on the case of the degenerate affine Grassmannians (though the general case is also discussed).
From the point of view of representation theory this means that we restrict to the basic level one module of the affine Lie algebras.
The general definition of the degenerate flag varieties goes through the theory of highest weight modules. In the
affine case the same definition works perfectly. Unfortunately, we are not able to identify the resulting ind-variety
even for affine $\msl_n$ (however, we completely describe the $A_1$ case and put forward a conjecture for the symplectic Lie
algebras). The problems pop up even on the level of representations: we do not have good enough description of the 
PBW graded level one basic representation $L_0^a$ of the affine $\msl_n$. In section 2 we formulate a conjecture saying
that $L_0^a$ can be realized inside the semi-infinite wedge space. Unfortunately, we can not prove the conjecture at the moment.  
  
It turns out that there exists a similar object attached to the affine Lie algebra $\widehat{\mgl}_n$.
More precisely, using the formalism of the semi-infinite wedge spaces, we define the PBW degeneration ${\rm Gr}^a(\mgl_n)$ of the affine
Grassmannians. The definition is very similar to the general Lie theoretic one and we are able
to describe the degenerate object in linear algebra terms. Using this description, we make a connection to the theory
of quiver Grassmannians for the one vertex loop quivers. More precisely, ${\rm Gr}^a(\mgl_n)$  can be realized as an inductive
limit of quiver Grassmannians, which are analogues of Schubert varieties. We use the representation theoretic techniques
to derive algebro-geometric properties of this finitization. In particular, we study the structure of orbits and construct desingularizations
explicitly. We also identify them with certain classical affine Schubert varieties for larger groups.

Yet another approach to the study of the $\widehat{\mgl}_n$ degenerate Grassmannians comes from the identification with
the closure of the semi-infinite orbit for the classical parabolic flag varieties for $\widehat{\msl}_{2n}$.
We identify the finitization as above with concrete subvarieties inside the closure.
We thus realize ${\rm Gr}^a(\mgl_n)$ as a semi-infinite orbit closure inside the $\msl_{2n}$ classical affine Grassmannian.

The paper is organized as follows. In section 1 we collect the basic objects and constructions on affine algebras and
PBW degenerations. In section 2 we define and study degenerate affine Grassmannians of type $A$.  Section 3 is devoted to
the quiver part of the story: the principal quiver Grassmanians for the one vertex loop quivers are studied and the identification
with subvarieties in the degenerate  flag varieties and with affine Schubert varieties is constructed.
In section 4 we discuss the degenerate affine Grassmannians  of types $\widehat{\msl}_2$ and $\widehat{\msp}_{2n}$.

\section{The setup}
\subsection{Affine Lie algebras}
Let $\g$ be a simple Lie algebra, $\gh$ the corresponding affine algebra. We have
\[
\gh=\g\T\bC[t,t^{-1}]\oplus\bC K\oplus \bC d,
\]
where $K$ is central and $[d,x\T t^i]=-ix\T t^i$.
Let $\g=\fn^-\oplus\fh\oplus\fn$ be the Cartan decomposition. Consider the decomposition
for the affine algebra $\widehat{\fg}=\widehat{\fg}^-\oplus\widehat{\fg}^0\oplus\widehat{\fg}^+$, where
\begin{gather*}
\widehat{\fg}^-=\fn^-\T 1\oplus \fg\T t^{-1}\bC[t^{-1}], \ \widehat{\fg}^0=\fh\T 1\oplus\bC K\oplus \bC d,\\
\widehat{\fg}^+=\fn^+\T 1\oplus \fg\T t\bC[t].
\end{gather*}

Let $\theta$ be the highest root for $\fg$.
For a dominant integral $\g$-weight
$\la$ and a non-negative integer $k$ such that $(\la,\theta)\le k$, let $L_{\la,k}$ be the
corresponding irreducible integrable highest weight $\gh$-module with a highest weight vector
$v_{\la,k}$. We have
\begin{gather*}
\widehat{\fg}^+ v_{\la,k}=0,\ \U(\widehat{\fg}^-)v_{\la,k}=L_{\la,k},\\
(h\T 1) v_{\la,k}=\la(h)v_{\la,k},\ Kv_{\la,k}=kv_{\la,k},\ dv_{\la,k}=0.
\end{gather*}

Let $G$ be a simple simply-connected Lie group with a Borel subgroup $B$, $\fg=Lie(G)$, $\fb=Lie(B)$.
Let $\widehat G$ and $I$ be the corresponding affine group and its Iwahori subgroup, respectively.
For a parahoric subgroup $P\subset \widehat G$ such that $I\subset P\subset \widehat G$,
let $\widehat G/P$ be the corresponding affine flag variety. These varieties are infinite-dimensional
ind-varieties, i.e.~they are inductive limits of the finite-dimensional Schubert varieties. More precisely,
let $T\subset\widehat G$ be the Cartan torus and let $p$ be a $T$-stable point in the affine flag variety. The corresponding Schubert variety
is the closure of the $I$-orbit through $p$. If $P=P_0$ is  the maximal parahoric corresponding to the affine simple root
then the corresponding flag variety is called the affine Grassmannian. Finally, we note that if $P$ stabilizes the highest weight line
$\bC v_{\la,k}$ in $\bP(L_{\la,k})$, then we have a natural embedding $\widehat G/P\subset \bP(L_{\la,k})$.

\subsection{Sato Grassmannians and flag varieties of type $A_\infty$}
We consider an infinite-dimensional vector space $V$ with a basis $v_i$, $i\in\bZ$. The semi-infinite
wedge space $F=\Lambda^{\infty/2}$ is spanned by the elements
\[
v_{i_0}\wedge v_{i_1}\wedge\dots,\ i_0>i_1>\dots,\ i_{k+1}=i_k-1 \text{ for } k \text{ large enough.}
\]
The charge of the wedge product $v_{i_0}\wedge v_{i_1}\wedge\dots$ is defined as $(i_k+k)$ for $k$ large enough.
We have the decomposition $F=\bigoplus_{m\in\bZ} F^{(m)}$, where $F^{(m)}$ is spanned by the wedge products of charge $m$.
We define the vacuum vectors
\[
|m\ket=v_m\wedge v_{m-1}\wedge v_{m-2}\wedge\dots\in F^{(m)}.
\]

The Lie algebra $\mgl_\infty$ is spanned by the matrix units $E_{i,j}$, i.e. it consists of the infinite matrices with finite support.
The natural action of $\mgl_\infty$ on the space $V$ extends to an action on each space  $F^{(m)}$. It is easy to see that
each $F^{(m)}$ is an irreducible $\mgl_\infty$ module with highest weight vector $|m\ket$.

The Sato Grassmannian ${\rm SGr}_m$ sits inside $\bP(F^{(m)})$ via the Pl\"ucker embedding; it consists of subspaces $U\subset V$ with the property that there
exists $N\in\bZ$ such that
\[
{\rm span}(v_N,v_{N-1},\dots)\subset U \text{ and } \dim U/{\rm span}(v_N,v_{N-1},\dots)=m-N.
\]
In particular, the line $\bC|m\ket$ belongs to (the image of) ${\rm SGr}_m$. It is easy to see that all the varieties
${\rm SGr}_m$ are isomorphic (via shifts of the indices of  $v_k$).
Following \cite{KaPe} we define the flag variety  $\Fl_\infty$ of type $A_\infty$ as the subvariety of the product
$\prod_{m\in\bZ} {\rm SGr}_m$ consisting of
collections $(U_m)_{m\in\bZ}$, $U_m\in {\rm SGr}_m$ such that $U_m\subset U_{m+1}$.

\subsection{Type $A^{(1)}_{n-1}$ case}
Now let us consider an $n$-dimensional vector space $W$ with a basis $w_1,\dots,w_n$.  Let us identify
the space $W\T \bC[t,t^{-1}]$ with $V$ by
\begin {equation}\label{fold}
v_{nk+j}=w_j\T t^{-k-1},\ j=1,\dots,n,\ k\in\bZ
\end{equation}
In particular,
\[
|0\ket=(w_1\T 1)\wedge\dots\wedge (w_n\T 1)\wedge (w_1\T t)\wedge\dots \wedge (w_n\T t)\wedge\dots .
\]
This gives an embedding
$\msl_n\T\bC[t,t^{-1}]\subset\mgl_\infty$ and induces an action of $\widehat{\msl}_n$ on each $F^{(m)}$.
The irreducible highest weight  $\widehat{\msl}_n$ modules can thus be realized in the semi-infinite picture.
In particular, the level one module $L_{\omega_i,1}$, $i=0,\dots,n-1$ can be seen as the subspace of $F^{(i)}$ generated by $|i\ket$.
We also see that the affine Grassmannian $\widehat{SL_n}/P_0$ of type $A_n$ is naturally embedded into the Sato Grassmannian.
The image of this embedding can be described explicitly:
\begin{equation}
\widehat{SL_n}/P_0 = \{U\in {\rm SGr}_0:\ tU\subset U\}.
\end{equation}
The complete flag variety for affine $SL_n$
is formed by the collections $(U_i)_{i\in\bZ}$, $U_i\in {\rm SGr}_i$ such that $U_i\subset U_{i+1}$, $tU_i\subset U_{i+n}$.

\subsection{PBW filtration and degenerate flag varieties}
We first collect some facts on the PBW degeneration in type $A$ (\cite{FFoL1}, \cite{FFoL3}, \cite{F1}). Given a highest weight representation $V_\la$
of $\msl_n$ we have an increasing filtration on $V_\la$, induced by the PBW filtration on $\U(\fn^-)$. The associated graded
spaces $V^a_\la$ are modules over the abelian algebra $(\fn^-)^a$ with the underlying vector space $\fn^-$.
The degenerate flag variety $\Fl^a_\la(\fg)\subset \bP(V^a_\la)$ is the closure of the orbit of the group
$\exp((\fn^-)^a)$ through the line containing the highest weight vector $v_\la$.

Let $\la$ be a regular dominant weight for $\fg=\msl_n$. Then the corresponding degenerate flag variety does not depend on $\la$
and can be described explicitly as follows.
Let $W=\bC^n$ with a basis $w_1,\dots,w_n$ and let $pr_k$ be the projection along $w_k$ to the span
of the remaining basis vectors. Then the degenerate flag variety attached to a regular dominant weight 
consists of collections
$(V_1,\dots,V_{n-1})$, $V_k\in{\rm Gr}_k(W)$ such that $pr_{k+1}V_k\subset V_{k+1}$. We denote this variety by $\Fl^a(\msl_n)$.
Note that it was shown recently in \cite{CL}, Theorem 1.2  that $\Fl^a(\msl_n)$ is isomorphic to a Schubert variety for $\msl_{2n-2}$. 
We also note that the
degenerate Grassmann varieties for $\msl_n$ coincide with their classical analogues.

The explicit realization of $\Fl^a(\msl_n)$ can be naturally generalized to the $A_\infty$ case.
Recall the basis $v_i$ of $V=\bC^\infty$. We define the operators $pr_k:V\to V$ as projections along $v_k$ to the span of
$v_i$, $i\ne k$.  The following definitions are degenerate versions of the classical analogues (see e.g. \cite{KaPe}).
\begin{dfn}
The full degenerate flag variety $\Fl^a_\infty$ of type $A_\infty$ is the subvariety of the product $\prod_{m\in\bZ} {\rm SGr}_m$ consisting of
collections $(U_m)_{m\in\bZ}$, $U_m\in {\rm SGr}_m$ such that $pr_{m+1}U_m\subset U_{m+1}$.
\end{dfn}
We note that these varieties can be seen as infinite limits of the type $A_n$ degenerate flag varieties.

\begin{rem}
The degeneration procedure from $\Fl_\infty$ to $\Fl^a_\infty$ can be described as follows. 
Let ${\bf F}_\infty$ be the subvariety inside ${\mathbb A}^1\times \prod_{m\in\bZ} {\rm SGr}_m$, consisting 
of the points $\left(t,(U_m)_{m\in\bZ}\right)$ such that $pr_{m+1}(t) U_m\subset U_{m+1}$, where
$pr_{m+1}(t):V\to V$ is the map defined by $pr_{m+1}(t)v_k=v_k$, $k\ne m+1$ and $pr_{m+1}(t)v_{m+1}=tv_{m+1}$.
Then we have the natural projection ${\bf F}\to{\mathbb A}^1$. The fiber over $t=0$ is isomorphic to $\Fl^A_\infty$ and
the general fiber is isomorphic to $\Fl_\infty$.
\end{rem}

Let us now define the affine degenerate flag varieties of type $\mgl_n$.
We denote by $pr_{w_i\T t^k}:W\T \bC[t,t^{-1}]\to W\T \bC[t,t^{-1}]$ the projections along $w_i\T t^k$
to the span of the remaining vectors of the form $w_j\T t^l$.

\begin{dfn}
The degenerate flag variety $\Fl^a(\mgl_n)$ of type $\mgl_n$  consists of collections
$(U_i)_{i=0}^n$ such that $U_i\in{\rm SGr}_i$,
$pr_{w_{i+1}\T t^{-1}} U_i\subset U_{i+1}$ and  $U_n=t^{-1}U_0$.
\end{dfn}

\begin{rem}\label{flgr}
We note that if $(U_i)_{i=0}^n\in \Fl^a(\mgl_n)$, then
\[
pr_{W\T 1} tU_0 = pr_{w_1\T 1}\dots pr_{w_n\T 1} tU_0\subset U_0.
\]
Indeed,
\[
pr_{W\T t^{-1}} tU_n = pr_{w_1\T t^{-1}}\dots pr_{w_n\T t^{-1}}(U_0)\subset U_n.
\]
Since $tU_n=U_0$ we obtain $pr_{W\T 1} tU_0\subset U_0$.
\end{rem}

Now let us consider the case of the affine Lie algebras. 
The standard PBW filtration $F_\bullet$ on the universal enveloping algebra
$\U(\widehat{\fg}^-)$ induces a filtration on the module $L_{\la,k}$. The associated graded space
$L^a_{\la,k}$ is a representation of the abelian Lie algebra $\widehat{\fg}^{-,a}$ (with the underlying vector space $\widehat{\fg}^{-}$).
\begin{lem}\label{nilp}
For any $x\in \widehat{\fg}^{-,a}$ there exists $N$ such that $x^N v_{\la,k}=0$ in $L_{\la,k}^a$.
\end{lem}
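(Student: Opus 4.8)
The plan is to reduce to a single affine root vector and then to treat real and imaginary roots separately; only the imaginary case is a genuine difficulty. Since $\widehat{\fg}^{-,a}$ is abelian, if $x=\sum_i x_i$ is the decomposition of $x$ into affine root vectors (real ones $e_\be\T t^{-m}$, $f_\be\T 1$, and imaginary ones in $\fh\T t^{-m}$), then the $x_i$ act as commuting operators on $L^a_{\la,k}$; hence if each $x_i$ satisfies $x_i^{N_i}v_{\la,k}=0$, the multinomial expansion together with the pigeonhole principle gives $x^Nv_{\la,k}=0$ for $N\ge\sum_i(N_i-1)+1$. Writing each element of $\fh\T t^{-m}$ as a combination of $h_\al\T t^{-m}$ for simple coroots $h_\al$ and applying the same remark once more, I am reduced to two cases: $x$ a real root vector, and $x=h_\al\T t^{-j}$ with $j\ge1$. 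In the first case the claim is immediate from integrability: in $L_{\la,k}$ every real root vector acts locally nilpotently, so $x^Nv_{\la,k}=0$ already in $L_{\la,k}$ for $N\gg0$, whence the symbol vanishes in $L^a_{\la,k}$.

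The heart of the matter, and the main obstacle, is the imaginary case $x=h_\al\T t^{-j}$: this element is \emph{not} locally nilpotent on $L_{\la,k}$ itself (it behaves like a bosonic creation operator), so the PBW grading must genuinely be exploited. Here I would write $h_\al\T t^{-j}=[A,B]$ with $A=e_\al\T t^{-j}$ and $B=f_\al\T1$, both real root vectors lying in $\widehat{\fg}^-$. A direct computation in $\widehat{\fg}$ gives $(\ad A)B=h_\al\T t^{-j}$, $(\ad A)^2B=-2\,e_\al\T t^{-2j}=:-2A'$ and $(\ad A)^3B=0$, so that conjugation by the operator $\exp(sA)$ (well defined and locally finite since $A$ is locally nilpotent) is exact:
\[
\exp(sA)\,B\,\exp(-sA)=B+s\,(h_\al\T t^{-j})-s^2A'.
\]

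Because $B=f_\al\T1$ is locally nilpotent and $\exp(-sA)v_{\la,k}=\sum_{r<p}\tfrac{(-s)^r}{r!}A^rv_{\la,k}$ is a \emph{fixed} finite sum (with $A^pv_{\la,k}=0$), there is an $N_0$ independent of $s$ with $B^{N_0}\exp(-sA)v_{\la,k}=0$; conjugating back, $\bigl(B+s\,(h_\al\T t^{-j})-s^2A'\bigr)^{N_0}v_{\la,k}=0$ for all $s$. Now I pass to the associated graded. The element $B+s\,(h_\al\T t^{-j})-s^2A'$ has PBW degree one with symbol $\ol B+s\,\ol{h_\al\T t^{-j}}-s^2\ol{A'}$, so taking the top-degree symbol of the vanishing relation yields
\[
\bigl(\ol B+s\,\ol{h_\al\T t^{-j}}-s^2\,\ol{A'}\bigr)^{N_0}\,v_{\la,k}=0\quad\text{in }L^a_{\la,k},\ \text{for all }s.
\]

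To conclude, observe that the three symbols act as commuting operators. Letting $\mathcal A$ be the commutative algebra they generate on the cyclic vector $v_{\la,k}$, the previous display says exactly that $T_s:=\ol B+s\,\ol{h_\al\T t^{-j}}-s^2\ol{A'}$ is nilpotent in the polynomial ring $\mathcal A[s]$. The elementary fact that a nilpotent polynomial over a commutative ring has nilpotent coefficients then forces each coefficient, in particular $\ol{h_\al\T t^{-j}}$, to be nilpotent in $\mathcal A$, which is precisely the assertion $(h_\al\T t^{-j})^Nv_{\la,k}=0$ in $L^a_{\la,k}$ for $N\gg0$. The step I would check most carefully is the uniformity of $N_0$ in $s$: it is this uniformity that makes $T_s$ genuinely nilpotent in $\mathcal A[s]$, rather than merely nilpotent for each numerical value of $s$, and hence lets the commutative-algebra lemma apply.
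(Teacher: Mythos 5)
Your proof is correct, and for the crucial step it takes a genuinely different route from the paper's. The paper makes the same initial reduction (abelianness of $\widehat{\fg}^{-,a}$ plus a pigeonhole argument reduces to a single element $r\T t^{-l}$), but then embeds $r$ into an $\msl_2$-triple to assume $\fg=\msl_2$, and settles that case by citing an external result: starting from the integrability relation $(f\T t^{-i})^M v_{\la,k}=0$, the adjoint action of $\msl_2\T 1$ generates the space of relations $U(\msl_2)f^M$ inside $\mathrm{Sym}(\msl_2)$, and the quotient $\mathrm{Sym}(\msl_2)/(U(\msl_2)f^M)$ is finite-dimensional by \cite{F3}, Corollary 4.3, which yields nilpotency of all of $\msl_2\T t^{-i}$ on $v_{\la,k}$ at once. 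You instead isolate the only genuinely hard case --- the Cartan loop elements $h_\al\T t^{-j}$, the real root vectors being locally nilpotent on $L_{\la,k}$ itself by integrability --- and dispose of it by a self-contained argument: conjugate $B=f_\al\T 1$ by $\exp(s\,e_\al\T t^{-j})$ (an exact, finite formula since $(\ad A)^3B=0$), observe that the nilpotency exponent $N_0$ is uniform in $s$ because $\exp(-sA)v_{\la,k}$ lies in a fixed finite-dimensional subspace, pass to symbols in $L^a_{\la,k}$, and invoke the elementary fact that a nilpotent polynomial over a commutative ring has nilpotent coefficients. Both arguments run on the same engine --- relations coming from integrability are propagated by a symmetry to force nilpotency of elements that are \emph{not} locally nilpotent on $L_{\la,k}$ --- but your symmetry is conjugation by the exponential of a degree-one element, whereas the paper's is the degree-zero adjoint action of $\msl_2\T 1$. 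What your route buys is self-containedness and elementarity: beyond local nilpotence of real root vectors on integrable modules, the only input is a standard commutative-algebra exercise. What the paper's route buys is brevity and a stronger structural conclusion (finite-dimensionality of the whole algebra of symbols of one loop copy of $\msl_2$ acting on $v_{\la,k}$), at the price of the citation. Finally, the worry you flag --- uniformity of $N_0$ in $s$, which is what upgrades pointwise nilpotency to nilpotency in $\mathcal A[s]$ --- is exactly the right point to scrutinize, and your argument does settle it, since a vector-valued polynomial in $s$ vanishing for all complex $s$ has vanishing coefficients.
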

\begin{proof}
Recall that the algebra $\widehat{\fg}^{-,a}$ is abelian.
It suffices to prove the lemma for $x=r\T t^{-l}$ for some $r\in\fg$, $l>0$.
Since $r$ can be included into an $\msl_2$-triple, we may assume $\fg=\msl_2$.
Let $e,h,f$ be the standard basis of $\msl_2$. We know that
for any $i$ there exists $M$ such that $(f\T t^{-i})^Mv_{\la,k}=0$. Also we have the adjoint action of
$\msl_2=\msl_2\T 1$ on the symmetric algebra of $\msl_2$, generating new relations.
Hence, it suffices to prove that the algebra Sym$(\msl_2)/(U(\msl_2)f^M)$ is finite-dimensional.
But this algebra is known to be finite-dimensional by \cite{F3}, Corollary 4.3.
\end{proof}

\begin{rem}
We note that this lemma does not hold in the non-degenerate situation.
\end{rem}

Let $\widehat{G}^{-,a}=\exp(\widehat{\fg}^{-,a})$ be the Lie group of the Lie algebra $\widehat{\fg}^{-,a}$. This group is isomorphic to the
sum of (an infinite number of) copies of the groups $\bG_a$ (one copy for each negative root of $\gh$).
Let us now consider the projective space $\bP(L^a_{\la,k})$. For a vector $v\in L^a_{\la,k}$ let $[v]\in \bP(L^a_{\la,k})$ be
the corresponding line in the projectivization.

\begin{dfn}\label{dafv}
The degenerate affine flag variety $\Fl^a_{\la,k}$ is the closure of the orbit $\widehat{G}^{-,a}[v_{\la,k}]$
inside $\bP(L^a_{\la,k})$.
\end{dfn}

\begin{rem}
The orbit $\widehat{G}^{-,a}[v_{\la,k}]$ makes sense, since the action of the group $\widehat{G}^{-,a}$ on
$\bP(L^a_{\la,k})$ is well defined. In fact, thanks to Lemma \ref{nilp}, all the operators from the Lie algebra 
$\widehat{\fg}^{-,a}$ of the Lie group $\widehat{G}^{-,a}$ act as nilpotent operators and hence can be exponentiated,
giving rise to the $\widehat{G}^{-,a}$-action.
\end{rem}

\begin{rem}
We do not describe the degeneration procedure in this paper. Note however that for finite-dimensional Lie algebras
of type $A_n$ the closure of the orbit of the group  $\widehat{G}^{-,a}$ (of its finite-dimensional analogue, to be precise)
is indeed a (flat) degeneration of the classical flag variety (see \cite{F1}, Proposition 5.8). 
\end{rem}

\section{Degenerate affine Grassmannians of type $A$}
\subsection{Main theorems}
\begin{dfn}
The degenerate affine Grassmannian ${\rm Gr}^a(\mgl_n)$ of type $\mgl_n$ is the subvariety of the Sato Grassmannian
${\rm SGr}_0$ consisting of subspaces $U$ such that $pr_{W\T 1} tU\subset U$, where
$pr_{W\T 1}$ is the projection along $W\T 1$ to the span of $W\T t^i, i\ne 0$.
\end{dfn}

We also define finite-dimensional approximations of the varieties ${\rm Gr}^a(\mgl_n)$.
\begin{dfn}
${\rm Gr}^a_N(\mgl_n)$ is the subvariety of ${\rm Gr}^a(\mgl_n)$ consisting
of $U$ such that
\[
W\T t^N\bC[t] \subset U\subset W\T t^{-N}\bC[t].
\]
\end{dfn}

\begin{rem}\label{B}
Let $S_{N,n}=\frac{W\T t^{-N}\bC[t]}{W\T t^N\bC[t]}$. Then ${\rm Gr}_N^a(\mgl_n)$ is isomorphic to the variety of $Nn$-dimensional
subspaces $U\subset S_{N,n}$ such that $pr_{W\T 1} tU\subset U$.
\end{rem}

We note that ${\rm Gr}^a(\mgl_n)$ is naturally the inductive limit of its finite dimensional pieces ${\rm Gr}^a_N(\mgl_n)$.
We will prove the following theorems:

\begin{thm}\label{fdim}
\begin{itemize}
\item ${\rm Gr}^a_N(\mgl_n)$ is an irreducible projective variety of dimension $Nn^2$.
\item ${\rm Gr}^a_N(\mgl_n)$ carries an action of an abelian unipotent group $\bG_a^{Nn^2}$ with an open dense orbit.
\item ${\rm Gr}^a_N(\mgl_n)$ is isomorphic to an affine Schubert variety for the group $\widehat{SL}_{2n}$.
\end{itemize}
\end{thm}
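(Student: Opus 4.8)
The plan is to reduce everything to the linear-algebra model of Remark \ref{B} and then to a lattice model, from which all three assertions follow. By Remark \ref{B}, ${\rm Gr}^a_N(\mgl_n)$ is the variety of $Nn$-dimensional subspaces $U\subset S_{N,n}={\rm span}(v_i:-nN<i\le nN)$ with $AU\subset U$, where $A=pr_{W\T 1}\circ t$ is the operator induced on $S_{N,n}$. Since $tv_i=v_{i-n}$ and $pr_{W\T 1}$ annihilates $v_{1-n},\dots,v_0$, the operator $A$ sends $v_i\mapsto v_{i-n}$ unless $i\in\{1,\dots,n\}$ or $i-n\le -nN$, in which cases $Av_i=0$. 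First I would compute the Jordan type of $A$: its kernel is spanned by the $2n$ basis vectors killed by these two rules, and tracing chains exhibits, for each $j=1,\dots,n$, an upper chain $v_{j+n(N-1)}\to\dots\to v_j\to 0$ and a lower chain $v_{j-n}\to\dots\to v_{j-nN}\to 0$, each of length $N$. Hence $A$ is nilpotent with $2n$ Jordan blocks of size $N$, i.e. $(S_{N,n},A)\cong(\bC[x]/x^N)^{2n}$ with $x$ acting as $A$, and ${\rm Gr}^a_N(\mgl_n)$ is the quiver Grassmannian of $Nn$-dimensional submodules of the free module $(\bC[x]/x^N)^{2n}$ for the one-vertex loop quiver.

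Next I would set up the lattice model. Using $\bC[x]/x^N\cong x^{-N}\bC[[x]]/\bC[[x]]$, rewrite the module as $x^{-N}\Lambda_0/\Lambda_0$ with $\Lambda_0=\bC[[x]]^{2n}$; then $Nn$-dimensional submodules correspond bijectively to lattices $\Lambda$ with $\Lambda_0\subseteq\Lambda\subseteq x^{-N}\Lambda_0$ and $\dim\Lambda/\Lambda_0=Nn$, that is, to points of the affine Grassmannian of $GL_{2n}$ in this bounded (hence closed) region. By the elementary-divisor stratification over $\bC[[x]]$, such lattices are partitioned into the $GL_{2n}(\bC[[x]])$-orbits $\mathrm{Gr}^\mu$ with $\mu=(\mu_1\ge\dots\ge\mu_{2n})$, $0\le\mu_i\le N$, $\sum_i\mu_i=Nn$. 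The decisive combinatorial point is that, writing $\la=(N^n,0^n)$, every such $\mu$ satisfies $\mu\le\la$ in dominance order, since $\sum_{i\le k}\mu_i\le\min(kN,nN)=\sum_{i\le k}\la_i$; conversely each dominant $\mu\le\la$ of degree $Nn$ has $0\le\mu_i\le N$. Therefore the bounded lattice variety is exactly the closure $\overline{\mathrm{Gr}^{(N^n,0^n)}}$ of a single spherical orbit, an affine Schubert variety; as it lies in one connected component of $\mathrm{Gr}_{GL_{2n}}$, which is isomorphic to the affine Grassmannian of $SL_{2n}$, this identifies ${\rm Gr}^a_N(\mgl_n)$ with an affine Schubert variety for $\widehat{SL}_{2n}$, proving the third bullet.

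The first bullet then follows from standard properties of affine Schubert varieties: $\overline{\mathrm{Gr}^{(N^n,0^n)}}$ is an irreducible projective variety, and its dimension is given by the standard formula $\dim\overline{\mathrm{Gr}^\la}=\sum_{i<j}(\la_i-\la_j)$; for $\la=(N^n,0^n)$ only the $n^2$ pairs $(i,j)$ with $i\le n<j$ contribute, each giving $N$, so $\dim=Nn^2$.

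Finally, for the second bullet I would produce the abelian unipotent group explicitly. The base point $U_0={\rm span}(v_i:-nN<i\le 0)$ is precisely the free submodule generated by the $n$ lower-chain tops $v_{1-n},\dots,v_0$, so it has type $(N^n,0^n)$ and lies in the open orbit $\mathrm{Gr}^{(N^n,0^n)}$, where the Schubert variety is smooth of dimension $Nn^2$. The group $\bG_a^{Nn^2}$ should be the truncation to the window $S_{N,n}$ of the abelian unipotent group $\widehat G^{-,a}=\exp(\widehat{\fg}^{-,a})$ from Definition \ref{dafv}: only finitely many abelianized lowering operators act nontrivially on $S_{N,n}$, and I would show they span an $Nn^2$-dimensional abelian Lie algebra $\fa$ with $\bG_a^{Nn^2}=\exp(\fa)$. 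I would then verify that the orbit $\bG_a^{Nn^2}\cdot U_0$ is open dense by computing the differential of the orbit map $\fa\to T_{U_0}{\rm Gr}^a_N(\mgl_n)$ and checking it is injective with $Nn^2$-dimensional image; an irreducible locally closed orbit of full dimension inside the irreducible variety of the same dimension is dense, and a group orbit that is dense is open (with trivial stabilizer, since $\bG_a^{Nn^2}$ is torsion-free). The main obstacle is exactly this last step: matching the degenerate abelian group against the finite window and proving infinitesimal transitivity at $U_0$, i.e. that the abelianized lowering operators span the tangent space to the open cell. Steps two and three, by contrast, reduce to the elementary-divisor stratification and the one-line dominance inequality above.
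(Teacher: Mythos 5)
Your proposal is correct in substance, but for the first and third bullets it takes a genuinely different route from the paper. The paper proves irreducibility, projectivity and the dimension count via an explicit resolution $R_{N,n}\to{\rm Gr}^a_N(\mgl_n)$, a tower of ${\rm Gr}(n,2n)$-fibrations (Lemma \ref{irr}), and proves the Schubert property by first identifying ${\rm Gr}^a_N(\mgl_n)$ with the quiver Grassmannian $X^{(N)}_{Nn,2n}$ (Example \ref{XGr}; this is your Jordan-type computation) and then constructing, for every $X^{(N)}_{k,m}$, an explicit Iwahori-equivariant embedding of $\Lambda^k(\bC^m\T\bC[t]/t^N)$ into the level-one module, which carries $X^{(N)}_{k,m}$ onto an $I$-orbit closure (Proposition \ref{Sch}, Corollary \ref{aSv}). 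You instead pass to the lattice model and identify ${\rm Gr}^a_N(\mgl_n)$ with the ${\rm GL}_{2n}(\bC[[x]])$-orbit closure $\overline{\mathrm{Gr}^{(N^n,0^n)}}$ via elementary divisors and the dominance inequality, importing the standard facts that spherical orbit closures in the affine Grassmannian are irreducible projective Schubert varieties of dimension $\langle 2\rho,\la\rangle$; bullets one and three then come for free. Your route is shorter and names the Schubert variety explicitly; the paper's is self-contained (its Theorem \ref{adherence} in effect reproves from scratch the closure-order fact you invoke, for exactly these strata) and its resolution and orbit analysis are reused elsewhere (Section \ref{resolution}). For the second bullet both arguments use the same group $\exp\bigl(\bigoplus_{i=1}^N\mgl_n^a\T t^{-i}\bigr)\simeq\bG_a^{Nn^2}$: the paper computes the orbit of $\bC|0\ket$ outright (Lemma \ref{GGr}, Proposition \ref{group}) --- it is the open cell where the Pl\"ucker coordinate of the base point $W\T\bC[t]$ is nonzero --- and gets density from irreducibility.

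Two points need repair, though neither is fatal. First, ``truncation to the window of $\widehat G^{-,a}$'' is not quite right: the operators $p(x\T t^{-i})$ with $N<i<2N$ act nontrivially on vectors of the window but do not preserve $W\T t^N\bC[t]$, hence neither descend to $S_{N,n}$ nor preserve ${\rm Gr}^a_N(\mgl_n)$; you must take exactly $\fa=\bigoplus_{i=1}^N\mgl_n^a\T t^{-i}$, as the paper does. Once this is done, your flagged ``main obstacle'' evaporates: each such operator maps $W\T\bC[t]$ into $W\T t^{-1}\bC[t^{-1}]$ and kills the latter, so it squares to zero, the exponential is $1+P$, and the orbit of $U_0$ is literally the affine space of graphs of block-Hankel maps from the span of the $w_j\T t^k$, $0\le k<N$, to the span of the $w_j\T t^l$, $-N\le l<0$; containment in ${\rm Gr}^a_N(\mgl_n)$, injectivity of the differential and the dimension $Nn^2$ all follow in one stroke (alternatively, observe that these operators commute with $pr_{W\T 1}t$ on $S_{N,n}$, so your group sits inside ${\rm GL}_{2n}(A_N)$ and automatically preserves the variety). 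Second, in the third bullet you should justify that the isomorphism between the degree-$Nn$ component of $\mathrm{Gr}_{GL_{2n}}$ and $\mathrm{Gr}_{SL_{2n}}$ matches Schubert varieties with Schubert varieties: left translation by an arbitrary element of nonzero determinant valuation does not normalize the Iwahori subgroup, so one should translate by a length-zero element of the extended affine Weyl group (which does normalize $I$), or else match highest-weight lines explicitly as in the paper's Proposition \ref{Sch}.
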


\begin{thm}\label{infdim}
\begin{itemize}
\item ${\rm Gr}^a(\mgl_n)$ carries an action of an infinite dimensional abelian unipotent group with an open dense orbit. The group is the inductive
limit of the groups $\bG_a^{Nn^2}$ from Theorem \ref{fdim}.
\item ${\rm Gr}^a(\mgl_n)$ is isomorphic to a semi-infinite orbit closure inside the affine Grassmannian for the group $\widehat{SL}_{2n}$.
\end{itemize}
\end{thm}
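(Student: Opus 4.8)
The plan is to derive both assertions from the finite-dimensional Theorem~\ref{fdim} by passing to the inductive limit, using that ${\rm Gr}^a(\mgl_n)=\varinjlim_N {\rm Gr}^a_N(\mgl_n)$ with each ${\rm Gr}^a_N(\mgl_n)$ a closed (projective) subvariety of ${\rm Gr}^a_{N+1}(\mgl_n)$. I fix once and for all the base point $p=\bC|0\ket$, which lies in every ${\rm Gr}^a_N(\mgl_n)$ and is the highest weight line; the open dense orbit of Theorem~\ref{fdim} in ${\rm Gr}^a_N(\mgl_n)$ is its orbit $O_N=\bG_a^{Nn^2}\cdot p$. Recall that in the ind-variety topology a subset of ${\rm Gr}^a(\mgl_n)$ is open, resp.\ closed, exactly when its intersection with each finite piece is.

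For the first bullet, I would first check that the actions furnished by Theorem~\ref{fdim} are compatible with the closed embeddings ${\rm Gr}^a_N(\mgl_n)\hookrightarrow {\rm Gr}^a_{N+1}(\mgl_n)$. The groups $\bG_a^{Nn^2}$ are realized by explicit nilpotent (shift-type) operators on $V$ and form, under the natural inclusions $\bG_a^{Nn^2}\hookrightarrow\bG_a^{(N+1)n^2}$, an increasing chain; one verifies that the smaller group preserves ${\rm Gr}^a_N(\mgl_n)$ and that its action there coincides with the restriction of the action of the larger group. Setting $\mathcal G=\varinjlim_N\bG_a^{Nn^2}$, I would form the orbit $O=\mathcal G\cdot p=\bigcup_N O_N$; since the base point is shared and the groups increase, $O_N\subset O_{N+1}$. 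Density of $O$ follows from density of each $O_N$ in ${\rm Gr}^a_N(\mgl_n)$, and for openness I would use that $O\cap {\rm Gr}^a_N(\mgl_n)=\bigcup_{N'\ge N}\big(O_{N'}\cap {\rm Gr}^a_N(\mgl_n)\big)$ is a union of subsets open in the projective variety ${\rm Gr}^a_N(\mgl_n)$, hence open, so that $O$ is open in the ind-topology. This step is essentially formal once equivariance of the embeddings is established.

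For the second bullet, I would rely on the identification, supplied by Theorem~\ref{fdim}, of each ${\rm Gr}^a_N(\mgl_n)$ with an affine Schubert variety $X_{w_N}$ for $\widehat{SL}_{2n}$, arranging these identifications so that the embeddings ${\rm Gr}^a_N(\mgl_n)\hookrightarrow {\rm Gr}^a_{N+1}(\mgl_n)$ correspond to inclusions $X_{w_N}\hookrightarrow X_{w_{N+1}}$ of Schubert varieties inside ${\rm Gr}_{\widehat{SL}_{2n}}$. Realizing ${\rm Gr}_{\widehat{SL}_{2n}}$ in the lattice model attached to the natural $2n$-periodic regrouping of the basis $\{v_i\}$, I would single out one coweight $\mu$ and the corresponding semi-infinite orbit $S_\mu$ (the orbit through $s^\mu$ of a maximal unipotent subgroup of the loop group), whose closure is the union $\overline{S_\mu}=\bigsqcup_{\nu\le\mu}S_\nu$. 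The core of the argument is then to prove, first, that $X_{w_N}\subset\overline{S_\mu}$ for all $N$, and second, that the family $\{X_{w_N}\}$ is cofinal among the affine Schubert varieties contained in $\overline{S_\mu}$, so that every such Schubert variety lies in some $X_{w_N}$. Granting both, the increasing chain $X_{w_0}\subset X_{w_1}\subset\cdots$ exhausts $\overline{S_\mu}$, and passing to the limit gives ${\rm Gr}^a(\mgl_n)=\varinjlim_N X_{w_N}=\overline{S_\mu}$ as ind-varieties.

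The hard part will be precisely this matching: pinning down $\mu$ and establishing both the containment $X_{w_N}\subset\overline{S_\mu}$ and the cofinality. Here one must translate the defining condition $pr_{W\T 1}tU\subset U$ -- a half-shift followed by a projection, rather than an honest $s$-stability condition -- into the combinatorics of Mirkovi\'c--Vilonen cells, so that the finite approximations ${\rm Gr}^a_N(\mgl_n)$ are recognized exactly as the intersections $\overline{S_\mu}\cap X_{w_N}$ and not as a smaller or larger family. I expect the cleanest route to be a direct linear-algebra description of $\overline{S_\mu}$ in the lattice model whose incidence conditions match, term by term, the definition of ${\rm Gr}^a(\mgl_n)$, with Theorem~\ref{fdim} invoked to certify that the resulting bijection is an isomorphism of varieties on each finite piece.
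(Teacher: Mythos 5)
Your argument for the first bullet is fine and is essentially the paper's own: the paper defines $G(n)$ as the closure of the orbit of $\exp(\mgl_n^a\T t^{-1}\bC[t^{-1}])=\varinjlim_N\bG_a^{Nn^2}$ through the highest weight line, and the levelwise identification $G_N(n)\simeq{\rm Gr}^a_N(\mgl_n)$ of Proposition \ref{group} immediately gives $G(n)\simeq{\rm Gr}^a(\mgl_n)$ together with the open dense orbit; your compatibility-of-actions and limit argument spells out the same thing.

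For the second bullet there is a genuine gap, and moreover the strategy aims at the wrong object. The ``semi-infinite orbit'' of the theorem is \emph{not} an orbit $S_\mu$ of the unipotent radical of a Borel of the loop group (the Mirkovi\'c--Vilonen setting you invoke, with closure $\bigsqcup_{\nu\le\mu}S_\nu$); it is the orbit of the \emph{abelian} group $\exp(\fa\T t^{-1}\bC[t^{-1}])$ through the highest weight line $[l_0]\in\widehat{SL}_{2n}/P_0\subset\bP(L_0(\msl_{2n}))$, where $\fa\subset\msl_{2n}$ is the abelian nilradical spanned by the $E_{i,j}$ with $n+1\le i\le 2n$, $1\le j\le n$ (the ``parabolic'' semi-infinite orbit; see the paper's introduction). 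For $n\ge 2$ this group is a proper subgroup of the full lower-triangular unipotent loop group and its orbit through $[l_0]$ is strictly smaller, so matching ${\rm Gr}^a(\mgl_n)$ against a Borel-type $\overline{S_\mu}$ cannot yield the claimed identification. The paper needs none of the machinery you propose: Lemma \ref{2n} provides an isomorphism of cyclic modules $L_0(\mgl_n^a)\simeq U(\fa\T t^{-1}\bC[t^{-1}])l_0\subset L_0(\msl_{2n})$ which intertwines the actions under $\fa\simeq\mgl_n^a$ and matches $|0\ket$ with $l_0$; projectivizing, it carries the orbit closure $G(n)\simeq{\rm Gr}^a(\mgl_n)$ isomorphically onto the closure of the $\exp(\fa\T t^{-1}\bC[t^{-1}])$-orbit through $[l_0]$, i.e.\ onto the semi-infinite orbit closure, and the proof ends there. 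By contrast, your route rests on two unproved claims that together constitute the entire content of the statement: (a) that the Schubert identifications can be arranged compatibly with the chain ${\rm Gr}^a_N(\mgl_n)\subset{\rm Gr}^a_{N+1}(\mgl_n)$ --- not automatic, since Proposition \ref{Sch} places $X^{(N)}_{Nn,2n}$ in the parahoric quotient $\widehat{SL}_{2n}/P_{Nn\!\!\mod 2n}$, which changes with the parity of $N$; and (b) the containment $X_{w_N}\subset\overline{S_\mu}$ together with cofinality, which you explicitly defer. Note also that in the paper the Schubert realization (Corollary \ref{aSv}) is proved after, and independently of, Theorem \ref{infdim}; it is an output of this circle of ideas, not an ingredient in the proof of the second bullet.
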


\begin{lem}\label{irr}
${\rm Gr}^a_N(\mgl_n)$ is an irreducible projective variety of dimension $Nn^2$.
\end{lem}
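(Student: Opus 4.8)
The plan is to argue entirely inside the concrete model of Remark \ref{B}. Put $R=\bC[x]/(x^N)$ and let $x$ act on $S_{N,n}$ through the nilpotent operator $\phi=\mathrm{pr}_{W\T 1}\circ t$. On the basis $w_j\T t^i$, $-N\le i\le N-1$, one checks that for each fixed $j$ the operator $\phi$ consists of the two length-$N$ Jordan chains $w_j\T t^{-N}\to\dots\to w_j\T t^{-1}\to 0$ and $w_j\T t^{0}\to\dots\to w_j\T t^{N-1}\to 0$, so that $S_{N,n}\cong R^{2n}$ as an $R$-module. Under this identification the condition $\mathrm{pr}_{W\T 1}\,tU\subset U$ becomes $xU\subset U$, and therefore
\[
{\rm Gr}^a_N(\mgl_n)\ \cong\ X:=\{\,U\subseteq R^{2n}\ \text{an $R$-submodule with}\ \dim_\bC U=Nn\,\}.
\]

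Next I would isolate the generic cell and read off the dimension. Fix the splitting $R^{2n}=R^n\oplus R^n$ and let $\mathcal O\subseteq X$ be the locus of $U$ with $U\cap(0\oplus R^n)=0$; such a $U$ is automatically a complement of $0\oplus R^n$, i.e.\ the graph of an $R$-linear map $R^n\to R^n$. Thus $\mathcal O\cong\Hom_R(R^n,R^n)=\mathrm{Mat}_n(R)\cong\BA^{Nn^2}$, and $\mathcal O$ is open in $X$ because $\dim_\bC\bigl(U\cap(0\oplus R^n)\bigr)$ is upper semicontinuous. Moreover $\mathcal O$ is exactly the orbit of $U_0=R^n\oplus 0$ under the abelian unipotent group of transformations $(v_1,v_2)\mapsto(v_1,v_2+Av_1)$, $A\in\mathrm{Mat}_n(R)$, which is isomorphic to $\bG_a^{Nn^2}$; this already supplies the group demanded by the second bullet of Theorem \ref{fdim}. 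Consequently $X$ carries a smooth open subset isomorphic to $\BA^{Nn^2}$, and once we know $X=\ol{\mathcal O}$ the dimension is forced to be $Nn^2$.

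Irreducibility is thus the whole point, and I would set it up through the orbits of $\mathrm{Aut}_R(R^{2n})=GL_{2n}(R)$ acting on $X$. Since $R$ is a principal ideal ring, a submodule and its quotient are direct sums of cyclic modules $R/(x^k)$ with $k\le N$; with $\dim_\bC U=Nn$ fixed there are only finitely many isomorphism types of embeddings $U\hookrightarrow R^{2n}$, so $X$ is a union of finitely many $GL_{2n}(R)$-orbits. Each orbit is irreducible (image of the connected group $GL_{2n}(R)$), the free rank-$n$ direct summands form one such orbit which is open and contains the chart $\mathcal O$ above, and hence every irreducible component of $X$ is an orbit closure and $\ol{\mathcal O}$ is one component. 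The lemma is therefore equivalent to the statement that there are no others, i.e.\ that every orbit is contained in $\ol{\mathcal O}$. (Note that mere connectedness of $X$ would not suffice here, since a connected variety may well be reducible.)

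This last point is the main obstacle. I would prove it by exhibiting, for an arbitrary submodule $U$, an explicit one-parameter family $(U_s)_{s\in\BA^1}$ in $X$ with $U_s\in\mathcal O$ for $s\ne0$ and $U_0=U$, degenerating a generic free direct summand onto the given $U$; already the cases $n=1$ and $U=xR^{2n}$ exhibit the mechanism, where suitable families of graph submodules limit onto $xR^{2n}$. A structural way to organize these degenerations is the $\bC^\times$-action $\lambda\cdot(v_1,v_2)=(v_1,\lambda v_2)$, which commutes with $x$, has the split submodules $U=(U\cap R^n)\oplus(U\cap R^n)$ as its fixed points, and whose limit structure I would use to show that every submodule specializes into $\ol{\mathcal O}$. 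Granting density of $\mathcal O$ we conclude that $X=\ol{\mathcal O}$ is irreducible of dimension $Nn^2$. Finally, I note that irreducibility can alternatively be imported from the third bullet of Theorem \ref{fdim}: an affine Schubert variety is by definition the closure of a single $I$-orbit, hence irreducible, with dimension equal to the length of the corresponding Weyl group element. I would keep this as a clean fallback, but prefer the intrinsic degeneration argument so that Lemma \ref{irr} does not rely on the Schubert identification.
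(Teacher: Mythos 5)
Your reduction to the linear--algebra model is correct and essentially reproduces the paper's own bookkeeping: the identification of ${\rm Gr}^a_N(\mgl_n)$ with the variety of $Nn$-dimensional $R$-submodules of $R^{2n}$, $R=\bC[x]/(x^N)$, is Remark \ref{B}/Example \ref{XGr}; your chart $\mathcal{O}\cong\Hom_R(R^n,R^n)\cong\BA^{Nn^2}$ of graph submodules is (up to relabeling) the open orbit of Proposition \ref{group}; and the finite $GL_{2n}(R)$-orbit stratification is the orbit analysis of Section 3. You also correctly observe that projectivity is immediate, that the dimension statement follows once $X=\ol{\mathcal{O}}$ is known, and that the lemma is thereby \emph{equivalent} to the claim that every orbit lies in $\ol{\mathcal{O}}$.

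That last claim, however, is the entire content of the lemma, and you do not prove it --- you only announce a plan ("I would prove it by exhibiting\ldots", "Granting density of $\mathcal{O}$\ldots"). Neither of your two proposed mechanisms closes it as stated. (i) The explicit one-parameter degenerations are exactly the hard direction of the paper's Theorem \ref{adherence}: there one must first reduce, via chains of \emph{minimal} degenerations in the dominance order, to the two-row case, and then write down the explicit family generated by $(zt^{N-\lambda_1},t^{N-\lambda_1+1})$ and $(t^{N-\lambda_2-1},zt^{N-\lambda_2})$. Your two sample cases do not indicate how to handle a general partition --- e.g.\ for $n=2$, $N=3$ the submodule of type $(2,2,2,0)$ cannot be reached by degenerating pairs of free summands independently, one genuinely needs the chain $(3,3,0,0)>(3,2,1,0)>(2,2,2,0)$ --- so this is a theorem's worth of missing work, not a verification. (ii) The $\bC^\times$-action $\lambda\cdot(v_1,v_2)=(v_1,\lambda v_2)$ flows in the wrong direction: $\lim_{\lambda\to 0}\lambda\cdot U$ is the associated split submodule, so the flow shows that split submodules lie in closures of $\bC^\times$-orbits of arbitrary points, not that arbitrary points lie in $\ol{\mathcal{O}}$; and a Bialynicki--Birula cell decomposition is not available on a variety whose smoothness and irreducibility are precisely in question. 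Finally, your fallback via the third bullet of Theorem \ref{fdim} is circular within the paper's logic: the proof of Proposition \ref{Sch} (whence Corollary \ref{aSv}) begins by choosing the $T$-fixed point whose $I$-orbit \emph{is the open dense part} of $X^{(N)}_{k,m}$, which presupposes irreducibility. The paper's own proof avoids all orbit degenerations: it constructs the variety $R_{N,n}$ of flags $(U_0,\ldots,U_N)$, $U_i\subset S_{N,n}(i)$, $({\rm pr}_{W\T 1}t)\,U_i\subset U_{i+1}$, which is a tower of $Gr(n,2n)$-fibrations --- hence irreducible of dimension $Nn^2$ --- and surjects generically one-to-one onto ${\rm Gr}^a_N(\mgl_n)$. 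If you want to stay inside your framework, the quickest honest repair is the route of Corollary \ref{maincor}/Corollary \ref{geocon}: realize $X$ as a geometric quotient of the irreducible variety $(\mathcal{N}^{(N)}(V)\times\Hom(V,W))^{\rm st}$; but some such argument must actually be supplied.
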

\begin{proof}
We prove the claim by constructing a resolution of singularities.  
We consider the space $S_{N,n}=\frac{W\T t^{-N}\bC[t]}{W\T t^N\bC[t]}$. The operator
$pr_{W\T 1} t$ naturally acts on $S_{N,n}$. In particular, $(pr_{W\T 1}t)^N=0$. Let $S_{N,n}(k)={\rm Im} (pr_{W\T 1})^k$
for $k=0,1,\dots,N-1$. In particular, $\dim S_{N,n}(k)=n(2N-2k)$.

Let $R_{N,n}$ be the variety of collections of vector spaces $(U_0,U_1,\dots,U_N)$ such that the following holds:
\begin{enumerate}
\item $U_i\subset S_{N,n}(k),\; \dim U_i=n(N-k)$,
\item $pr_{W\T 1}t U_i\subset U_{i+1},\; i=0,\dots, N-1$.
\end{enumerate}
One easily sees that $R_{N,n}$ is an $N$-floor tower of fibrations over a point, each fibration having fiber $Gr(n,2n)$.
In addition, $R_{N,n}$ surjects onto ${\rm Gr}^a_N(\mgl_n)$ (forgetting all $U_i$ with $i>0$) and this surjection is generically one-to-one.
In fact, let $R_{N,n}^0\subset R_{N,n}$ be the subvariety cut out by the condition $\dim (pr_{W\T 1})^k t U_0=n(N-k)$.
By definition of $R_{N,n}$, the restriction of the surjection $R_{N,n}\to {\rm Gr}^a_N(\mgl_n)$ to 
$R_{N,n}^0$ is one-to-one. Now easily sees that $R_{N,n}^0$ is an open dense part of $R_{N,n}$.
This proves the lemma.  
\end{proof}

\begin{rem}
We generalize the claim of Lemma \ref{irr} in Example \ref{XGr} and Corollary \ref{geocon}.
\end{rem}

In order to prove the remaining statements of Theorem \ref{fdim} and Theorem \ref{infdim} we construct a
projective embedding of ${\rm Gr}^a(\mgl_n)$ and hence of its finite-dimensional pieces ${\rm Gr}^a_N(\mgl_n)$.

\subsection{Semi-infinite abelianization}
Let us decompose the Lie algebra $\mgl_\infty$ into four blocks
\[
\mgl_\infty=\mgl_\infty^{-,-}\oplus \mgl_\infty^{-,+}\oplus \mgl_\infty^{+,-}\oplus\mgl_\infty^{+,+},
\]
where $\mgl_\infty^{-,+}$ is spanned by the matrix units $E_{i,j}$ with $i\le 0$, $j>0$,
$\mgl_\infty^{+,-}$ is spanned by the matrix units $E_{i,j}$ with $i>0$, $j\le 0$ and similarly for the two other summands.
For example, $\mgl_\infty^{+,-}$ is spanned by the operators mapping $v_{\le 0}$ to $v_{>0}$.
\begin{rem}
The summand $\mgl_\infty^{+,-}$ is abelian.
\end{rem}

Let $p:\mgl_\infty\to \mgl_\infty^{+,-}$ be the projection along the three other summands. Recall the embedding
$\mgl_n\T\bC[t,t^{-1}]\subset\mgl_\infty$. Combining this embedding with $p$ we obtain a map
\[
\mgl_n\T t^{-1}\bC[t^{-1}]\to \mgl_\infty^{+,-}.
\]
We define the space
\[
L_0(\mgl^a_n)=U(p(\mgl_n\T t^{-1}\bC[t^{-1}]))|0\ket.
\]
Since $\mgl_\infty^{+,-}$ is abelian, $L_0(\mgl^a_n)$ is a cyclic representation of the abelian
Lie algebra $\mgl^a_n\T t^{-1}\bC[t^{-1}]$.

\begin{example}\label{h(z)}
Let us consider the Lie algebra ${\widehat{\mgl_1}}$. This is nothing but the Heisenberg algebra. Let  $F^{(0)}$ be the
Fock module. Let us denote by $h_i$, $i\in\bZ$ the basis of ${\widehat{\mgl_1}}$. Then the elements $h_i$, $i<0$ are
represented by the formula $h_i=\sum_{k\in\bZ} E_{k-i,k}$.

The projection $p$ to $\mgl_\infty^{+,-}$ defines a representation of the abelian Lie algebra spanned by
$h_i$, $i<0$ on $F^{(0)}$.
This  representation is no longer irreducible. In particular, the subspace generated from the highest weight vector is
defined by the relation $(h_{-1}+zh_{-2}+\dots)^2=0$. Here $z$ is a variable and the relation above 
means that the coefficients of $z^k$ vanish for all $k\ge 0$.   
\end{example}

Let us consider the affine Lie algebra $\widehat{\msl}_{2n}$ and its vacuum representation $L_0(\msl_{2n})$ with
the highest weight vector $l_0$. Let us embed $\mgl_n^a$ into $\msl_{2n}$ as the unipotent radical corresponding to the weight $\omega_n$.
Explicitly, we consider the abelian subalgebra $\fa$ in $\msl_{2n}$, spanned by the matrix units $E_{i,j}$, $n+1\le i\le 2n$, $1\le j\le n$.
Clearly, $\fa\simeq \mgl_n^a$.

\begin{lem}\label{2n}
The identification $\fa\simeq \mgl_n^a$ induces an isomorphism between $L_0(\mgl_n^a)$ and the subspace
$U(\fa\T t^{-1}\bC[t^{-1}])l_0$ of $L_0(\msl_{2n})$.
\end{lem}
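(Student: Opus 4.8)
The plan is to realize everything inside the charge-zero component $F^{(0)}$ of the semi-infinite wedge space and to reduce the statement to an equality of annihilators. First I would fix the period-$2n$ folding $v_{2nk+j}=w'_j\T t^{-k-1}$ ($1\le j\le 2n$) of $V$, which embeds $\widehat{\msl}_{2n}\hookrightarrow\mgl_\infty$ and realizes $L_0(\msl_{2n})$ as the cyclic $\widehat{\msl}_{2n}$-submodule generated by $|0\ket$; in particular $l_0=|0\ket$ coincides with the cyclic vector of $L_0(\mgl_n^a)$. Since $\fa\T t^{-1}\bC[t^{-1}]$ is abelian (the relevant values of the central cocycle vanish because $\Tr(xy)=0$ for $x,y$ in the strictly lower-triangular block $\fa$) and $\mgl_n^a\T t^{-1}\bC[t^{-1}]$ is abelian by construction, both spaces are cyclic modules over the symmetric algebra of $\mgl_n\T t^{-1}\bC[t^{-1}]$, identified through $E_{n+a,b}\T t^{-l}\leftrightarrow E_{a,b}\T t^{-l}$. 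Thus the lemma is equivalent to the equality of the two annihilator ideals of the cyclic vector $|0\ket$.

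Next I would compare the two actions explicitly as operators on $F^{(0)}$. Superimposing the two foldings on the common space $V$ amounts to the reindexing $w'_j\T t^m=w_j\T t^{2m+1}$ for $j\le n$ and $w'_{n+a}\T t^m=w_a\T t^{2m}$; under this dictionary the $\fa$-generator $E_{n+a,b}\T t^{-l}$ sends $w_b\T t^{2m+1}$ to $w_a\T t^{2m-2l}$, i.e.\ it moves odd $t$-powers on the $b$-th strand to even $t$-powers on the $a$-th strand. Decomposing this operator along the four blocks $\mgl_\infty^{\pm,\pm}$, its $\mgl_\infty^{-,+}$-part vanishes (all shifts are positive), while its $\mgl_\infty^{+,+}$- and $\mgl_\infty^{-,-}$-parts annihilate $|0\ket$ (no particles above $0$, respectively a duplicated particle below $0$). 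Hence on $l_0=|0\ket$ every $\fa$-generator acts through its $\mgl_\infty^{+,-}$-component alone, exactly as the defining $p$-projected $\mgl_n$-action does on its cyclic vector.

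To pass from the cyclic vector to the whole module I would use the energy grading given by the $d$-eigenvalue. On a general vector of $\U(\fa\T t^{-1}\bC[t^{-1}])l_0$ the corrections coming from the $\mgl_\infty^{-,-}$-parts of the $\fa$-generators are in general nonzero, but they change the energy by strictly less than the $\mgl_\infty^{+,-}$-parts; passing to the associated graded module the action is governed purely by the $\mgl_\infty^{+,-}$-components, and the reindexing above matches these with the $p$-projected $\mgl_n$-generators. This should produce a surjective graded homomorphism $L_0(\mgl_n^a)\twoheadrightarrow\gr\,\U(\fa\T t^{-1}\bC[t^{-1}])l_0$, equivalently an inclusion of annihilator ideals in one direction.

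Finally I would upgrade the surjection to an isomorphism by a character count, proving the reverse inclusion of annihilators. The subspace $\U(\fa\T t^{-1}\bC[t^{-1}])l_0$ is a principal subspace of the basic module attached to the abelian nilradical $\fa$ of the $\omega_n$-parabolic of $\widehat{\msl}_{2n}$, so its graded dimension is computable from the level-one (Frenkel--Kac) relations; on the other side the relations defining $L_0(\mgl_n^a)$ are the matrix analogue of the relation $h(z)^2=0$ found in Example~\ref{h(z)}, and I would match the two Hilbert series. The base case $n=1$ is precisely Example~\ref{h(z)}, where the Heisenberg relation $h(z)^2=0$ equals the vacuum relation $f(z)^2=0$ in $L_0(\msl_2)$; for general $n$ I would reduce the individual matrix entries to this rank-one situation through the $\msl_2$-triples already used in Lemma~\ref{nilp}. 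I expect the main obstacle to be exactly this last step: pinning the annihilator ideal down precisely enough to see that no extra relations are created by the $\mgl_\infty^{-,-}$-corrections, equivalently that the two graded characters coincide.
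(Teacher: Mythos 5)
Your setup and your analysis of the action on the cyclic vector are correct and agree with the paper's proof in spirit: both realize $L_0(\msl_{2n})$ in a semi-infinite wedge space and observe that an $\fa$-generator can only hit $l_0$ through its $\mgl_\infty^{+,-}$-block. But the two steps you use to pass from the cyclic vector to the whole module both fail as stated. First, your premise that the $\mgl_\infty^{-,-}$-corrections ``are in general nonzero'' on $\U(\fa\T t^{-1}\bC[t^{-1}])l_0$ is wrong, and wrong in a way that matters: in your superimposed indexing every $\fa$-generator has odd-mode sources and even-mode targets, while every vector of the cyclic submodule still contains \emph{all} even non-negative modes as wedge factors (no $\fa$-generator can ever remove them, since $\fa$ kills the second block of strands). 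Hence every $(-,-)$-component creates a duplicated factor and is identically zero on the whole submodule, as is every $(+,+)$-component (its sources, the odd negative modes, never occur). This is exactly the paper's key observation --- $l_0$ contains the full wedge of $R^{>,+}$ as a spectator --- and it removes any need for an associated-graded argument. Moreover the filtration you propose could not do the job anyway: each generator $x\T t^{-l}$ is homogeneous for the $d$-grading, so all four block-components shift energy by exactly $l$; the filtration with the property you want is the one counting negative-mode factors, as in Section \ref{tilde} of the paper, not the energy.

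Second --- and this is the genuine gap --- the claim that ``the reindexing above matches these with the $p$-projected $\mgl_n$-generators'' is false for the reindexing you defined. Under your superimposed folding the $(+,-)$-part of $E_{n+a,b}\T t^{-l}$ sends $w_b\T t^{2m+1}\mapsto w_a\T t^{2m-2l}$ for $0\le m<l$ (odd sources, even targets, degree shift $2l+1$), whereas $p(E_{a,b}\T t^{-l})$ sends $w_b\T t^{m}\mapsto w_a\T t^{m-l}$ for $0\le m<l$; these are different operators on $F^{(0)}$, so neither inclusion of annihilator ideals follows from your dictionary. What is needed, and what the paper constructs, is a different identification that intertwines them: send the non-negative modes $w_j\T t^i$, $i\ge 0$, to the first $n$ strands and the negative modes $w_j\T t^i$, $i<0$, to the strands $j+n$. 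Once that intertwiner is in place, and once one knows the corrections vanish identically (first point), the two cyclic modules are carried onto each other vacuum-to-vacuum and generator-to-generator, and the lemma follows outright, with no character computation. Your route instead terminates in an unproven character identity, which you yourself flag as the main obstacle; nothing in your argument supplies it, and such identities are precisely the hard point in this subject (compare the paper's open Conjecture \ref{T}), so the proposal as written is incomplete.
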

\begin{proof}
Consider the space $R$ with basis $r_1,\dots,r_{2n}$. We write
$R=R^<\oplus R^>$, where $R^<$ is spanned by $r_1,\dots,r_n$ and $R^>$ is spanned by $r_{n+1},\dots,r_{2n}$.
Then $L_0(\msl_{2n})$ sits inside
$\Lambda^{\infty/2}(R\T\bC[t,t^{-1}])$ with $l_0$ being the wedge product
\[
l_0=\bigwedge_{i\ge 0} \left(r_1\T t^i\wedge\dots\wedge r_{2n}\T t^i\right).
\]
Let us write $R\T\bC[t,t^{-1}]$ as a direct sum of four subspaces $R^{<,+}$, $R^{<,-}$,
$R^{>,+}$ and $R^{>,-}$, where
\begin{gather*}
R^{<,+}=R^<\T \bC[t],\ R^{<,-}=R^<\T t^{-1}\bC[t^{-1}]
\end{gather*}
and similarly for $R^{>,+}$ and $R^{>,-}$. For example, $l_0$ is the wedge product of the ``top'' wedge powers of
$R^{<,+}$ and of $R^{>,+}$.

Now let us look at the subspace $U(\fa\T t^{-1}\bC[t^{-1}])l_0$. We take $a\in \fa$ and consider the vector $(a\T t^i) l_0$.
Clearly, the only nontrival terms showing up come from the action of $a\T t^i$ 
sending $R^{<,+}$ to $R^{>,-}$.
Recall the space $W$ with the basis $w_1,\dots,w_n$ used to construct the wedge representation for $\widehat{\mgl}_n$.
Let us embed $W\T \bC[t,t^{-1}]$ into $R\T\bC[t,t^{-1}]$ as follows:
\[
w_j\T t^i\mapsto r_j\T t^i,\ i\ge 0,\ w_j\T t^i\mapsto r_{j+n}\T t^i,\ i< 0.
\]
Thus the image of this embedding coincides with $R^{<,+}\oplus R^{>,-}$.
Now it remains to note that the operators of the form $p(\mgl_n^a)$ map
$W\T \bC[t]$ to $W\T t^{-1}\bC[t^{-1}]$.
\end{proof}

Let us consider the Lie group $GL_n^a(t^{-1}\bC[t^{-1}])=\exp(\mgl_n^a\T t^{-1}\bC[t^{-1}])$.
The group $GL_n^a(t^{-1}\bC[t^{-1}])$ is isomorphic to the direct sum
of an infinite number of copies of the additive group $\bG_a$ of the base field.
Consider the action of $GL_n^a(t^{-1}\bC[t^{-1}])$ on the projectivization $\bP(L_0(\mgl_n^a))$ and
(temporarily) denote by $G(n)$ the closure of the orbit through the line containing the highest weight vector.
Our goal is to identify $G(n)$ with ${\rm Gr}^a(\mgl_n)$.
Let us define a finite-dimensional approximation of $G(n)$ as follows:
\[
G_N(n)=\overline{\exp\left(\bigoplus_{i=1}^N \mgl_n^a\T t^{-i}\right)\cdot\bC|0\ket}.
\]

\begin{lem}\label{GGr}
Let $U\in G_N(n)$. Then $pr_{W\T 1}(tU)\subset U$ and $W\T t^N\bC[t] \subset U\subset W\T t^{-N}\bC[t]$.
\end{lem}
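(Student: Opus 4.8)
The plan is to establish the three assertions first on the open orbit $\exp\bigl(\bigoplus_{i=1}^N \mgl_n^a\T t^{-i}\bigr)\cdot\bC|0\ket$ and then extend them to the closure $G_N(n)$ by observing that each defining condition is closed. Throughout I abbreviate $A=pr_{W\T 1}\,t$, so that $A(w\T t^l)=w\T t^{l+1}$ for $l\ne -1$ and $A(w\T t^{-1})=0$, and I write $U_0=W\T\bC[t]$ for the subspace attached to $|0\ket$; the required condition $pr_{W\T 1}(tU)\subset U$ is then just $AU\subset U$.

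First I would record the precise form of the group action. For $M\in\mgl_n$ the operator $X_i:=p(M\T t^{-i})$ lies in the abelian block $\mgl_\infty^{+,-}$, which sends $v_{\le 0}$ into $v_{>0}$ and annihilates $v_{>0}$; concretely $X_i(w\T t^l)=Mw\T t^{l-i}$ exactly when $0\le l\le i-1$, and $X_i=0$ otherwise. Hence any product of two such operators vanishes, so for $Y=\sum_{i=1}^N p(M_i\T t^{-i})$ we have $Y^2=0$ and $\exp(Y)=\Id+Y$; a general orbit point is thus $U=(\Id+Y)U_0$.

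The core of the argument is the commutation identity $[A,Y]=0$, which I would verify by checking $[A,X_i]=0$ on the basis vectors $w\T t^l$, splitting into the ranges $l\ge 0$, $l=-1$ and $l\le -2$; in each range a direct comparison of $AX_i$ and $X_iA$ shows they agree (the exponent bookkeeping $t^{l-i+1}=t^{l+1-i}$ makes the two sides literally equal for $l\ge 0$, while both vanish for $l<0$). Granting this, $Ag=A(\Id+Y)=(\Id+Y)A=gA$ for every orbit element $g=\Id+Y$; since moreover $AU_0=W\T t\bC[t]\subset U_0$, we obtain $AU=AgU_0=gAU_0\subset gU_0=U$, the desired invariance. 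The two containments are then read off from $U=(\Id+Y)U_0$: because $Y(w\T t^l)=0$ for $l\ge N$ (the range $0\le l\le i-1\le N-1$ being empty) we get $W\T t^N\bC[t]\subset U$, and because every term of $(\Id+Y)(w\T t^l)$ with $l\ge 0$ has exponent at least $l-N\ge -N$ we get $U\subset W\T t^{-N}\bC[t]$.

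Finally I would pass to the closure. Since $A$ preserves both $W\T t^N\bC[t]$ and $W\T t^{-N}\bC[t]$, it descends to an operator $\bar A$ on $S_{N,n}$; by Remark \ref{B} the orbit lies in the Grassmannian of $Nn$-dimensional subspaces $U$ with $W\T t^N\bC[t]\subset U\subset W\T t^{-N}\bC[t]$, inside which the condition $\bar A\bar U\subset\bar U$ is closed. As all three conditions are closed and hold on the orbit, they hold on $G_N(n)$. I expect the only genuine computation to be the commutator identity $[A,X_i]=0$: it is short but index-sensitive, and everything else is formal.
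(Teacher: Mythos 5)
Your proof is correct and follows essentially the same route as the paper: both describe the orbit points explicitly as $(\Id+Y)U_0$ using $Y^2=0$ (the paper writes out the resulting spanning vectors $w_j\T t^k+\sum_{i>k}x_iw_j\T t^{k-i}$), verify the required conditions on the orbit, and pass to $G_N(n)$ by closedness. Your commutation identity $[A,\exp Y]=0$ is a clean repackaging of the paper's direct observation that $pr_{W\T 1}t$ sends each spanning vector to the one with $k$ replaced by $k+1$ (the $i=k+1$ term being killed by the projection), and your explicit closure argument via the operator $\bar A$ on $S_{N,n}$ just spells out what the paper leaves implicit in ``Now the claim is clear.''
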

\begin{proof}
Take an element $g=\exp(\sum_{i=1}^N x_i\T t^{-i})$, $x_i\in\mgl_n$. Then the space corresponding to the line
$g\cdot\bC|0\ket$ is spanned by the vectors
\[
w_j\T t^k + x_iw_j\T t^{k-i},\ j=1,\dots,n,\ i=1,\dots,N,\ k=0,\dots,N-1,\ k-i<0.
\]
Now the claim is clear.
\end{proof}

\begin{prop}\label{group}
$G_N(n)\simeq{\rm Gr}_N^a(\mgl_n)$.
\end{prop}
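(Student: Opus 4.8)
The plan is to establish the isomorphism $G_N(n)\simeq{\rm Gr}_N^a(\mgl_n)$ by showing that the orbit closure $G_N(n)$, realized inside the Sato Grassmannian, coincides as a set (and scheme) with the variety ${\rm Gr}_N^a(\mgl_n)$ defined by the explicit linear-algebra conditions. Lemma \ref{GGr} already gives one inclusion: every $U\in G_N(n)$ satisfies $pr_{W\T 1}(tU)\subset U$ together with $W\T t^N\bC[t]\subset U\subset W\T t^{-N}\bC[t]$, so by Remark \ref{B} we have $G_N(n)\subset{\rm Gr}_N^a(\mgl_n)$. The work is therefore to prove the reverse inclusion and to check that the identification respects the scheme structure, not merely the closed points.

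The main step is to show that the open dense orbit inside ${\rm Gr}_N^a(\mgl_n)$ agrees with the generic part of the orbit $\exp(\bigoplus_{i=1}^N\mgl_n^a\T t^{-i})\cdot\bC|0\ket$. First I would produce a parametrization: for $g=\exp(\sum_{i=1}^N x_i\T t^{-i})$ with $x_i\in\mgl_n$, the proof of Lemma \ref{GGr} exhibits the spanning vectors $w_j\T t^k + x_i w_j\T t^{k-i}$ of $g\cdot\bC|0\ket$. I would argue that the assignment $(x_1,\dots,x_N)\mapsto g\cdot\bC|0\ket$ is injective on an open set, so that the orbit has dimension $Nn^2$ (matching the number of free matrix entries), which is exactly the dimension of ${\rm Gr}_N^a(\mgl_n)$ established in Lemma \ref{irr}. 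Since ${\rm Gr}_N^a(\mgl_n)$ is irreducible of dimension $Nn^2$ by Lemma \ref{irr}, and $G_N(n)$ is a closed irreducible subvariety of the same dimension contained in it, the two must coincide.

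Concretely, I would identify the open cell $R_{N,n}^0$ used in the resolution in Lemma \ref{irr} with the image of the orbit map. Given $U$ in the open locus where $\dim(pr_{W\T 1}t)^k U=n(N-k)$ for all $k$, one can choose a basis of $U$ adapted to the filtration by the images of $(pr_{W\T 1}t)^k$ and read off matrices $x_i\in\mgl_n$ so that $U=g\cdot\bC|0\ket$; this inverts the orbit parametrization on the dense open set and shows that the abelian unipotent group $\bG_a^{Nn^2}$ acts on ${\rm Gr}_N^a(\mgl_n)$ with the claimed open dense orbit, reproving the second bullet of Theorem \ref{fdim} along the way. Because both varieties are reduced (the Sato Grassmannian is smooth and the defining incidence conditions cut out reduced subschemes) and share a common dense open subset, the closures agree and the set-theoretic identification upgrades to an isomorphism of varieties.

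The hardest part will be controlling the boundary: a priori the closure $G_N(n)$ could be strictly smaller than ${\rm Gr}_N^a(\mgl_n)$ if the latter had components or boundary strata not reached by the orbit. This is precisely where irreducibility of ${\rm Gr}_N^a(\mgl_n)$ from Lemma \ref{irr} is essential, since it forces the single orbit closure of the correct dimension to fill everything. I would therefore lean on Lemma \ref{irr} to rule out spurious components and use the density of $R_{N,n}^0$ to guarantee that the generic fibers of the resolution match the generic orbit, so that taking closures on both sides yields equality.
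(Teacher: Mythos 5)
Your central argument is correct, and it takes a genuinely different route from the paper's. The paper proves the hard inclusion directly: it identifies the orbit $\exp\left(\bigoplus_{i=1}^N \mgl_n^a\T t^{-i}\right)\cdot\bC|0\ket$ set-theoretically with the locus of all $U\in{\rm Gr}_N^a(\mgl_n)$ whose Pl\"ucker coordinate corresponding to the set $\{w_j\T t^i,\ 0\le i\le N-1\}$ is nonzero, and then concludes from irreducibility of both sides that the closures coincide; Lemma \ref{irr} enters only through irreducibility. You instead avoid determining exactly which points of ${\rm Gr}_N^a(\mgl_n)$ lie on the orbit: injectivity of $(x_1,\dots,x_N)\mapsto g\cdot\bC|0\ket$ (which in fact holds everywhere, not just generically, since the $x_i$ can be read off from the unique vectors of $U$ of the form $w_j\T 1+\sum_{i\ge 1}x_iw_j\T t^{-i}$, $U$ meeting $W\T t^{-1}\bC[t^{-1}]$ trivially) gives $\dim G_N(n)=Nn^2$, and a closed irreducible subvariety of full dimension inside the irreducible variety ${\rm Gr}_N^a(\mgl_n)$ must be the whole variety. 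This is a complete argument; it needs the dimension statement of Lemma \ref{irr} and not just irreducibility, and it yields the open dense orbit of Theorem \ref{fdim} for free, an orbit being open in its closure. (The scheme-theoretic worries are moot here, since both sides are varieties, i.e.\ reduced, by definition.)

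However, your ``concrete'' identification is false and should be deleted: the orbit is \emph{not} the locus cut out by the rank conditions $\dim (pr_{W\T 1}t)^k U=n(N-k)$, i.e.\ the image of $R_{N,n}^0$. Counterexample: for $N=n=1$ one has $pr_{W\T 1}t\equiv 0$ on $S_{1,1}$, so ${\rm Gr}_1^a(\mgl_1)=\bP(S_{1,1})\simeq\bP^1$ and the rank conditions are vacuous, while the orbit is only the affine line $\{[w\T 1+xw\T t^{-1}]\}$, missing the point $[w\T t^{-1}]$. Likewise for $N=2$, $n=1$, the subspace spanned by $w\T t^{-2}$ and $w\T t^{-1}$ lies in ${\rm Gr}_2^a(\mgl_1)$ and satisfies the rank conditions, but is not on the orbit. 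The rank conditions are strictly weaker than the correct characterization: the orbit is the locus where $U$ is a graph over $W\T \bC[t]/W\T t^N\bC[t]$ (equivalently, where the Pl\"ucker coordinate above is nonzero), and it is the invariance $pr_{W\T 1}tU\subset U$ that then forces this graph to be block-Toeplitz, i.e.\ of the form $g\cdot\bC|0\ket$. Consequently the appeal in your final paragraph to ``the density of $R_{N,n}^0$'' to match generic fibers of the resolution with the generic orbit does not work as stated; fortunately, your dimension count already closes the proof without it.
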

\begin{proof}
We know that both $G_N(n)$ and ${\rm Gr}_N^a(\mgl_n)$ are irreducible. According to Lemma \ref{GGr}, $G_N(n)$ sits inside
${\rm Gr}^a(\mgl_n)$. Also note that the open orbit
\[
\exp\left(\bigoplus_{i=1}^N \mgl_n^a\T t^{-i}\right)\cdot\bC|0\ket
\]
 consists of all $U\in \frac{W\T t^{-N}\bC[t]}{W\T t^N\bC[t]}$ such that the Pl\"ucker coordinate of $U$, corresponding to
the set of vectors $w_j\T t^i$, $i\le 0$ does not vanish. Hence the open parts of $G_N(n)$ and
${\rm Gr}^a(\mgl_n)$ coincide.
\end{proof}

\noindent{\bf Proof of Theorem \ref{fdim}} The first claim follows from Lemma \ref{irr} and the second claim
follows from Proposition \ref{group}. The proof of the last claim is postponed until the end of the next section
(see Corollary \ref{aSv}).  \qed

\noindent{\bf Proof of Theorem \ref{infdim}}
Since $G_N(n)\simeq{\rm Gr}_N^a(\mgl_n)$, we obtain $G(n)\simeq{\rm Gr}^a(\mgl_n)$. 
By Lemma \ref{2n} we obtain that ${\rm Gr}^a(\mgl_n)$ is naturally embedded into the classical affine Grassmannian for the group $\widehat{SL}_{2n}$.
The image of this embedding is the closure of the orbit of the abelian unipotent group $\exp(\fa\T t^{-1}\bC[t^{-1}])$
through the highest weight line. \qed

\subsection{$\msl_n$ case}\label{tilde}
Let $L_0(\msl_n)$ be the basic level one $\msl_n$ module with highest weight vector $l_0$.
Recall the PBW graded version $L_0^a(\msl_n)$; in particular, $L_0^a(\msl_n)=U(\msl_n^a\T t^{-1}\bC[t^{-1}])l_0$.
We define
\[
\tilde L_0(\msl_n)=U(p(\msl_n\T t^{-1}\bC[t^{-1}]))|0\ket.
\]

\begin{conj}\label{T}
The $\fn^{-,a}\T t^{-1}\bC[t^{-1}]$-modules $\tilde L_0(\msl_n)$ and $L_0^a(\msl_n)$ are isomorphic.
\end{conj}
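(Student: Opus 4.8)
The plan is to establish the isomorphism of $\fn^{-,a}\T t^{-1}\bC[t^{-1}]$-modules by comparing two explicit realizations, exactly paralleling the $\mgl_n$ argument of Lemma~\ref{2n}. On the one side we have $\tilde L_0(\msl_n)=U(p(\msl_n\T t^{-1}\bC[t^{-1}]))|0\ket$, the submodule of the semi-infinite wedge space $F^{(0)}$ generated from the vacuum under the \emph{projected} (hence abelian) action of $p(\msl_n\T t^{-1}\bC[t^{-1}])$. On the other side we have the PBW graded module $L_0^a(\msl_n)=U(\msl_n^a\T t^{-1}\bC[t^{-1}])l_0$, the associated graded of the basic representation under the PBW filtration. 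Both are cyclic modules over the abelian Lie algebra $\msl_n^a\T t^{-1}\bC[t^{-1}]$ generated by a highest weight vector, so the natural strategy is to build a surjective module map in one direction and then prove it is injective by a dimension/character count degree by degree.

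First I would construct the comparison map. The universal object is the symmetric algebra $S=U(\msl_n^a\T t^{-1}\bC[t^{-1}])$ acting on the respective cyclic vectors, so there are canonical surjections $S\twoheadrightarrow L_0^a(\msl_n)$ and $S\twoheadrightarrow \tilde L_0(\msl_n)$ sending $1$ to the highest weight vector. It therefore suffices to show these two quotients of $S$ are cut out by the \emph{same} ideal, equivalently that the annihilator of $|0\ket$ in $S$ equals the annihilator of $l_0$. The containment $\mathrm{Ann}(l_0)\subseteq\mathrm{Ann}(|0\ket)$ (or its reverse) would give the surjection; the heart of the matter is to prove equality. Here I would compare the defining relations explicitly: the relations in $L_0^a(\msl_n)$ come from the PBW degeneration of the Garland-type relations in the basic representation, while the relations in $\tilde L_0(\msl_n)$ come from the linear-algebra description of the projection $p$, in the spirit of the quadratic relation $(h_{-1}+zh_{-2}+\dots)^2=0$ computed for $\mgl_1$ in Example~\ref{h(z)}. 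I would write out the generating relations in both pictures and match them, using the $\msl_{2n}$-embedding $\fa\simeq\mgl_n^a$ and the wedge realization of Lemma~\ref{2n} to identify how the root operators of $\msl_n$ project into $\mgl_\infty^{+,-}$.

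The main obstacle is precisely the place where the text admits defeat: we lack a good enough description of $L_0^a(\msl_n)$, so the injectivity (equivalently, the equality of annihilators) cannot presently be verified. Concretely, the $\mgl_n$ case succeeds because $L_0(\mgl_n^a)$ sits inside a \emph{single} $\msl_{2n}$ semi-infinite wedge space as the orbit closure of a purely abelian group, and the defining relations are visibly quadratic; for $\msl_n$ the PBW graded relations are more intricate, and there is no known monomial basis or character formula for $L_0^a(\msl_n)$ that would let one match graded dimensions. The plan would be to reduce, via the adjoint $\msl_n=\msl_n\T1$ action (as in Lemma~\ref{nilp}), to understanding the relations generated from a single root vector $f_\theta\T t^{-l}$, and then attempt to show that the $\msl_n$-module generated by the obvious quadratic relations already exhausts $\mathrm{Ann}(l_0)$. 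Establishing that no higher-degree relations are needed is exactly the step I expect to be unable to complete, which is why the statement is left as Conjecture~\ref{T} rather than a theorem.
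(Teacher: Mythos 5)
You have correctly recognized that this statement is a conjecture in the paper: no complete proof exists there either, and your two-step plan (produce a surjection between the two cyclic modules over $S=U(\msl_n^a\T t^{-1}\bC[t^{-1}])$, then try to finish by a character count) is in outline exactly the partial progress the paper records, with the same admitted obstruction (no character formula or monomial basis for $L_0^a(\msl_n)$). However, relative to what the paper actually establishes, your write-up has a concrete gap: you never prove the annihilator containment that gives the surjection, and you even hedge on its direction (``or its reverse''). The paper proves specifically that $L_0^a(\msl_n)$ surjects onto $\tilde L_0(\msl_n)$, and the proof is not a formal matching of Garland-type relations against the relations of the projected action; it is a grading argument. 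One grades the semi-infinite wedge space by the number of positive indices appearing in a wedge monomial; every element of $\mgl_\infty^{+,-}$ raises this grading by exactly one, while the other three blocks of $\mgl_\infty$ preserve or lower it. Hence, if $p(x_1,x_2,\dots)$ is a homogeneous degree-$N$ relation in $L_0^a(\msl_n)$, i.e.\ $p(x_1,x_2,\dots)\,l_0=q(x_1,x_2,\dots)\,l_0$ in $L_0(\msl_n)$ with $\deg q<N$, then the degree-$N$ graded component of $p(x_1,x_2,\dots)|0\ket$ --- which is precisely the value of $p$ under the projected abelian action, since only the $\mgl_\infty^{+,-}$-parts of the $x_i$ can raise the grading --- must vanish, because $q(x_1,x_2,\dots)|0\ket$ has no component in degree $N$. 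This shows $\mathrm{Ann}_S(l_0)\subseteq\mathrm{Ann}_S(|0\ket)$ and fixes the direction of the surjection; without this step your proposal has two quotients of $S$ and no map between them.

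Two smaller points. First, your hope that the quadratic relations (as in Example \ref{h(z)}) together with the adjoint $\msl_n$-action exhaust $\mathrm{Ann}(l_0)$ is plausible but is not claimed in the paper even conjecturally; the paper's reduction is weaker and cleaner: granted the surjection, the conjecture is equivalent to the equality of characters of $L_0^a(\msl_n)$ and $\tilde L_0(\msl_n)$, and that is exactly what remains open. Second, the paper does settle the conjecture for $n=2$, not by matching relations but by taking the $ehf$-monomial basis of $L_0(\msl_2)$ from \cite{FKLMM} and showing, via an explicit choice of a wedge monomial $w(m)$ occurring in $m|0\ket$ for each $ehf$-monomial $m$, that these vectors stay linearly independent in $\tilde L_0(\msl_2)$; combined with the surjection this gives the isomorphism. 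Your plan, as stated, would not recover even this case, since it nowhere produces a lower bound on $\tilde L_0$ (linear independence) --- it only discusses upper bounds (relations).
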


We prove the following lemma.
\begin{lem}
There exists a surjection $L_0^a(\msl_n)\to \tilde L_0(\msl_n)$ of $\msl_n^a\T t^{-1}\bC[t^{-1}]$ modules.
\end{lem}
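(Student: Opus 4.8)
The plan is to construct the surjection directly on the level of cyclic modules by comparing defining relations. Both $L_0^a(\msl_n)$ and $\tilde L_0(\msl_n)$ are cyclic modules over the abelian Lie algebra $\msl_n^a\T t^{-1}\bC[t^{-1}]$, generated by the respective highest weight vectors $l_0$ and $|0\ket$. Since the enveloping algebra of this abelian algebra is the symmetric algebra $S=\mathrm{Sym}(\msl_n\T t^{-1}\bC[t^{-1}])$, each module is a cyclic $S$-module, hence of the form $S/J$ for an ideal $J$ (the annihilator of the cyclic vector). Thus to produce a surjection of $S$-modules $L_0^a(\msl_n)\to\tilde L_0(\msl_n)$ it suffices to show the containment of annihilators $\mathrm{Ann}_S(l_0)\subseteq\mathrm{Ann}_S(|0\ket)$; the induced map sends $l_0\mapsto|0\ket$ and is automatically surjective because $|0\ket$ generates.

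First I would recall the realization of the target. By Lemma \ref{2n} applied with the roles reversed (the same semi-infinite wedge argument, now for $\msl_n$ rather than $\mgl_n$), the module $\tilde L_0(\msl_n)=U(p(\msl_n\T t^{-1}\bC[t^{-1}]))|0\ket$ is exactly the image of $S$ acting on $|0\ket$ via the projection $p$ onto $\mgl_\infty^{+,-}$ composed with the action on $F^{(0)}$. So $\mathrm{Ann}_S(|0\ket)$ is the kernel of the composite $S\to U(\mgl_\infty^{+,-})\to F^{(0)}$, $X\mapsto p(X)|0\ket$. On the other side, $L_0^a(\msl_n)$ is the associated graded of the genuine level one module $L_0(\msl_n)$ with respect to the PBW filtration, and $\mathrm{Ann}_S(l_0)$ is the ideal generated by the principal symbols of the relations holding in $L_0(\msl_n)$.

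The key step, then, is to show that every relation of $L_0^a(\msl_n)$ already holds in $\tilde L_0(\msl_n)$, i.e.\ that each generator of $\mathrm{Ann}_S(l_0)$ lies in $\mathrm{Ann}_S(|0\ket)$. I would exhibit the defining relations of $L_0^a(\msl_n)$ explicitly as principal symbols of the standard integrability/Garland-type relations for the basic module --- relations of the form $(e_\ta\T t^{-1})^{2}$ (more precisely the PBW symbols of the straightening relations coming from $(f\T t^{-i})^{M}l_0=0$, cf.\ the proof of Lemma \ref{nilp}), together with the $\U(\msl_n)$-module structure generating the rest by the adjoint action. For each such symbol I would verify that applying $p(\cdot)$ and acting on $|0\ket$ gives zero in $F^{(0)}$; this is the concrete semi-infinite wedge computation, where the abelianity of $\mgl_\infty^{+,-}$ makes the products of $p$-projected operators annihilate $|0\ket$ for degree reasons, exactly as in Example \ref{h(z)}.

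The main obstacle I expect is not producing the surjection but controlling its kernel --- that is, precisely the difficulty flagged in the introduction. Showing $\mathrm{Ann}_S(l_0)\subseteq\mathrm{Ann}_S(|0\ket)$ only needs that the known relations of the PBW-degenerate module survive in the wedge realization, which is a finite bookkeeping of symbols against the abelian $\mgl_\infty^{+,-}$-action. What one cannot easily do is the reverse containment, since a good generating set for $\mathrm{Ann}_S(|0\ket)$ (equivalently, a PBW-type basis of $L_0^a(\msl_n)$) is exactly what is missing; that is why the statement is only a lemma giving a surjection rather than the isomorphism of Conjecture \ref{T}. Accordingly I would keep the argument one-directional: identify the explicit degenerate relations, check each maps to a relation in $\tilde L_0(\msl_n)$ via the wedge action, and conclude the existence of the surjective $\msl_n^a\T t^{-1}\bC[t^{-1}]$-module map sending $l_0$ to $|0\ket$.
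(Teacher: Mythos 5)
Your first step coincides with the paper's: both modules are cyclic over $S=\mathrm{Sym}(\msl_n\T t^{-1}\bC[t^{-1}])$, so it suffices to prove the containment of annihilators $\mathrm{Ann}_S(l_0)\subseteq\mathrm{Ann}_S(|0\ket)$. The gap is in how you propose to verify it. You want to check the containment on an explicit generating set of $\mathrm{Ann}_S(l_0)$, taken to be the PBW symbols of Garland/integrability-type relations such as $(f\T t^{-i})^M l_0=0$, propagated by the adjoint $\U(\msl_n)$-action. This fails for two reasons. First, $\mathrm{Ann}_S(l_0)$ is the associated graded ideal $\gr I$ of the left ideal $I\subset\U(\widehat{\fg}^-)$ annihilating $l_0$, and it is a standard pitfall that the symbols of a generating set of $I$ need not generate $\gr I$: passing to the associated graded can produce annihilating elements that do not lie in the ideal generated by the symbols of the old generators. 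Second, and decisively here, no explicit generating set of $\mathrm{Ann}_S(l_0)$ is known for affine $\msl_n$ --- this is precisely the missing ``good description of $L_0^a$'' flagged in the introduction, and it is why Conjecture \ref{T} is open. So the ``finite bookkeeping of symbols'' you envisage cannot get started: checking only the known relations would at best give a surjection onto $\tilde L_0(\msl_n)$ from the possibly larger quotient of $S$ by those relations, not from $L_0^a(\msl_n)$. (Note also that the obstacle in your scheme is already in producing the surjection, not, as you say, only in controlling its kernel; and the appeal to Lemma \ref{2n} ``with roles reversed'' is not needed --- that $\tilde L_0$ is the image of $S$ acting on $|0\ket$ through the projection $p$ is just its definition.)

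The paper's proof avoids all of this by an argument that applies uniformly to an \emph{arbitrary} element of $\mathrm{Ann}_S(l_0)$, with no generating set required. It introduces the grading on $\Lambda^{\infty/2}(V)$ by the number of positive indices occurring in a wedge monomial: every operator in $\mgl_\infty^{+,-}$ raises this grading by exactly one, while operators in the other three blocks preserve or lower it. Now take any homogeneous polynomial $p$ of degree $N$ with $p\cdot l_0=0$ in $L_0^a(\msl_n)$. This means that in $L_0(\msl_n)\subset F^{(0)}$ one has $p(x_\bullet)|0\ket=q(x_\bullet)|0\ket$ for some $q$ of degree less than $N$. The grading-$N$ component of the left-hand side is exactly the action of $p$ through the projected (abelianized) operators, i.e.\ the action defining $\tilde L_0(\msl_n)$, while the right-hand side has no component in grading $N$ since each operator raises the grading by at most one. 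Hence $p$ annihilates $|0\ket$ in $\tilde L_0(\msl_n)$, which is the containment of annihilators. This grading trick is the idea your proposal is missing.
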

\begin{proof}
We need to show that any relation which holds in $L_0^a(\msl_n)$ is true in $\tilde L_0(\msl_n)$. We consider a grading
on the semi-infinite wedge space $\Lambda^{\infty/2}(V)$. The $s$-th graded component is spanned by vectors
$v_{i_1}\wedge v_{i_2}\wedge\dots$ such that the number of positive indices among $i_l$ is equal to $s$. For example,
the zeroth component is spanned by $|0\ket$. Then any $x\in \mgl_\infty^{+,-}$ increases this grading by one.
The operators from the three remaining summands either decrease the grading or preserve it.

Now assume that we have a relation in $L_0^a(\msl_n)$. To make it explicit, let us fix a basis $x_1,x_2,\dots$ in
$\fn^{-}\T t^{-1}\bC[t^{-1}]$. Then any relation can be represented by a homogeneous total degree $N$ polynomial $p(x_1,x_2,\dots)$, which vanishes
in  $L_0^a$. This means that we have an equality in $L_0$:
\[
p(x_1,x_2,\dots) |0\ket = q(x_1,x_2,\dots) |0\ket \text{ for some } q \text{ of degree less than } N.
\]
This equality implies that $p(x_1,x_2,\dots) |0\ket$ vanishes in $\tilde L_0(\msl_n)$, since this is exactly the degree $N$
component of $p(x_1,x_2,\dots) |0\ket$, considered as a vector in $L_0(\msl_n)$.
\end{proof}

\begin{rem}
To complete the proof of Conjecture \ref{T} it thus suffices to show that the character of $L_0^a(\msl_n)$
is equal to the character of $\tilde{L}_0(\msl_n)$. Unfortunately, we are not able to do this at the moment.
\end{rem}

\subsection{Flatness}
In this section we discuss the infinite-dimensional analogue of the flatness of the degeneration of the $\msl_n$
flag varieties.
We consider the space $W\T \bC[t,t^{-1}]$, $W=\bC^n$. Let $U\in{\rm SGr}_0$ be a subspace with $tU\subset U$. We start with the
open cell containing the base point $U_0=W\T \bC[t]$. The coordinates in this cell are given by the collection
of linear mappings $(A_{k,i})$, $k\ge 1$, $i\ge 0$,  $A_{k,i}\in{\rm End} (W)$, such that for a fixed $i$,
the operators $A_{k,i}$ vanish for $k$ large enough.
The subspace $U$ corresponding to a collection $(A_{k,i})$ is defined as the linear span of the vectors
\begin{equation}\label{wti}
w\T t^i + A_{1,i}w\T t^{-1}+A_{2,i}w\T t^{-2}+\dots.
\end{equation}
Now we introduce the operator $\al_\hbar\in ({\rm End} W\T \bC[t,t^{-1}])$ defined by
\[
\al_\hbar (w\T t^i)=
\begin{cases}
w\T t^i,\text{ if } i\ne 0,\\
\hbar w\T t^0,\text{ if } i=0.
\end{cases}
\]

We consider the ind-subscheme ${\bf Gr}\subset{\rm SGr}_0\times\BA^1$ in
the product of the Sato Grassmannian and the affine line with coordinate
$\hbar$ cut out by the equations $\al_\hbar tU\subset U$.
%We introduce a family of varieties ${\rm Gr}_\hbar$ over affine line with the
%coordinate $\hbar$. Namely
%${\rm Gr}_\hbar$ consists of points $U\in {\rm SGr}_0$ such that
%$\al_\hbar tU\subset U$.
We note that if $\hbar\ne 0$ then the fiber of ${\bf Gr}$ over $\hbar$
is isomorphic to the affine Grassmannian for $\mgl_n$.
The special fiber ${\bf Gr}_0$ is the degenerate affine Grassmannian.
%Let ${\bf Gr}$ be the total space of the degeneration.

\begin{prop}
\label{flatn}
The morphism ${\bf Gr}\to\BA^1$ is flat.
\end{prop}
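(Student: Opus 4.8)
The plan is to reduce to the finite-dimensional pieces and to verify that the family has constant Hilbert polynomial. Since $\BA^1=\mathrm{Spec}\,\bC[\hbar]$ and $\bC[\hbar]$ is a principal ideal domain, flatness is equivalent to torsion-freeness of the structure sheaf over $\bC[\hbar]$, and this may be checked on the finite pieces ${\bf Gr}_N$ obtained by intersecting ${\bf Gr}$ with the bounds $W\T t^N\bC[t]\subset U\subset W\T t^{-N}\bC[t]$. First I would compute the defining ideal of ${\bf Gr}$ in the standard affine charts of ${\rm SGr}_0$, using the coordinates $A_{k,i}\in\mathrm{End}(W)$ of \eqref{wti}. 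Expanding the incidence condition $\al_\hbar tU\subset U$ in these coordinates produces the relations
\[
A_{m,i+1}=A_{m+1,i}-\hbar\,A_{1,i}A_{m,0},\qquad m\ge 1,\ i\ge 0,
\]
which recursively express every $A_{\bullet,i}$ with $i\ge 1$ as a $\bC[\hbar]$-polynomial in the initial data $A_{\bullet,0}$. The remaining boundary conditions $A_{\bullet,i}=0$ for $i\ge N$ then cut out ${\bf Gr}_N$ as the zero locus of explicit polynomials $P_m(A_{\bullet,0};\hbar)$ on $\{A_{\bullet,0}\}\times\BA^1$.

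Next I would exploit the scaling symmetry. Let $\rho_s$ be the $\bG_m$-action on ${\rm SGr}_0$ rescaling $W\T t^j$ by $s^{-1}$ for $j<0$ and fixing $W\T t^j$ for $j\ge 0$. A direct check gives $\rho_s\,\al_\hbar t\,\rho_s^{-1}=\al_{s\hbar}t$, so $\rho_s$ carries the fibre of ${\bf Gr}$ over $\hbar$ isomorphically onto the fibre over $s\hbar$. Hence all fibres over $\hbar\ne 0$ are mutually isomorphic (to the non-degenerate affine Grassmannian piece), and the Hilbert polynomial $P(\hbar)$ of the fibres of ${\bf Gr}_N\to\BA^1$ is constant on $\hbar\ne 0$. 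By the Hilbert-polynomial criterion for flatness over a smooth curve it then remains to prove $P(0)=P(\hbar\ne 0)$; equivalently, that no jump occurs as $\hbar\to 0$.

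The heart of the argument is this equidimensionality statement. By Lemma \ref{irr} the special fibre ${\rm Gr}^a_N(\mgl_n)$ is irreducible of dimension $Nn^2$, and upper semicontinuity gives $\dim {\bf Gr}_{N,0}\ge\dim {\bf Gr}_{N,\hbar}$ for $\hbar\ne 0$; the content lies in the reverse inequality together with equality of degrees. I see two routes. Geometrically, one shows that the total space ${\bf Gr}_N$ is Cohen--Macaulay with equidimensional fibres and invokes miracle flatness; the $\hbar$-deformation of the tower resolution $R_{N,n}$ of Lemma \ref{irr} is the natural tool for controlling ${\bf Gr}_N$. Algebraically, one shows that the ideal $(P_m(A_{\bullet,0};\hbar))$ admits a monomial initial degeneration independent of $\hbar$, so that a monomial basis of the special fibre lifts to a $\bC[\hbar]$-basis of the chart, which is exactly the torsion-freeness we want. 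I expect this last point --- that the degenerate boundary conditions $A_{\bullet,N}=0$ specialize flatly from their non-degenerate counterparts --- to be the main obstacle, since it is precisely where the geometry of the degeneration, rather than formal nonsense, enters.
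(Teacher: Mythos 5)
Your opening reduction --- ``torsion-freeness may be checked on the finite pieces ${\bf Gr}_N$'' --- is precisely the step that fails, and it fails for a reason the paper itself flags in the remark immediately after the proposition: the flatness holds \emph{only} at the level of the inductive limit. The truncated families ${\bf Gr}_N\to\BA^1$ are not flat, and the intermediate statements you plan to establish for them are false. Take $n=1$, $N=1$: writing $U=[x:y]\in\bP^1={\rm Gr}_1(S_{1,1})$, the condition $\al_\hbar tU\subset U$ is the single equation $\hbar x^2=0$ in $\bP^1\times\BA^1$, so the fibre over any $\hbar\ne 0$ is a double point, while the fibre over $\hbar=0$ is all of $\bP^1$. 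In general the special fibre of the truncation is ${\rm Gr}^a_N(\mgl_n)$ of dimension $Nn^2$ (Lemma \ref{irr}), whereas the general fibre is a bounded piece of the ordinary affine Grassmannian (with nilpotents) of strictly smaller dimension (roughly $Nn^2/2$). Consequently the Hilbert polynomial of the fibres of ${\bf Gr}_N\to\BA^1$ is not constant; the ``reverse inequality'' $\dim{\bf Gr}_{N,0}\le\dim{\bf Gr}_{N,\hbar}$, which you identify as the heart of the argument, is false; miracle flatness cannot apply because the fibres are not equidimensional; and no monomial basis of the special fibre can lift to a $\bC[\hbar]$-basis of a chart of ${\bf Gr}_N$, because that chart ring in the example above is $\bC[\hbar,a]/(\hbar a^2)$, which has the honest $\hbar$-torsion element $a^2$. (Checking torsion-freeness on the pieces would indeed suffice if they were torsion-free --- the limit ring embeds into their product --- but they are not, and the converse implication is exactly what fails here.)

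The paper's proof works instead with the coordinate ring of a dense open affine piece $S^0$ of the \emph{ind-scheme} (subspaces transversal to $W\T t^{-1}\bC[t^{-1}]$), i.e.\ with the inverse limit $\varprojlim_N\bC[S^0(N)]$, in which the torsion present at every finite level disappears. Your derivation of the chart relations parallels the paper's \eqref{Aki}, up to two details: the correct product is $\hbar A_{k-1,0}A_{1,i}$, not $\hbar A_{1,i}A_{k-1,0}$ (these differ as elements of ${\rm End}(W)$ for $n\ge 2$), and you eliminate in favour of the first column $A_{\bullet,0}$ where the paper uses the first row $A_{1,\bullet}$; either elimination is legitimate on the ind-scheme. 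The decisive point you are missing is what comes next: $\bC[S^0]$ is a degree-wise completion of the free polynomial ring in $\hbar$ and the entries of $A_{1,0},A_{1,1},\dots$ for the grading $\deg\hbar=0$, $\deg A_{k,i}=k+i$. The truncation relations that create torsion (such as $\hbar a^2=0$ above) have degree growing with $N$, so in any fixed degree they are absent for $N$ large; hence if $\hbar f=0$ in the limit ring, each homogeneous component $f^{(n)}$ lies in a genuine polynomial ring and is killed by $\hbar$, forcing $f=0$. Your scaling symmetry $\rho_s\al_\hbar t\rho_s^{-1}=\al_{s\hbar}t$ is correct and explains why all fibres over $\hbar\ne 0$ are isomorphic, but it cannot substitute for this limit argument, which is where the entire content of the proposition lies.
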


\begin{proof}
For each connected component ${\bf Gr}^n,\ n\in{\mathbb Z}$, of ${\bf Gr}$,
it suffices to produce a dense open affine subscheme $S^n\subset{\bf Gr}^n$
such that $\hbar\in\bC[S^n]$ is not a zero divisor. We will exhibit $S^0$;
the other connected components are taken care of similarly.
We define $S^0$ as the intersection of ${\bf Gr}^0$ with the open cell in
${\rm SGr}_0$ formed by the subspaces transversal to $W\otimes t^{-1}\bC[t^{-1}]$.

\begin{lem}
Let $U\in {\rm SGr}_0$ be defined by the collection of operators $(A_{k,i})$.
Then $(U,\hbar)\in{\bf Gr}$ if and only if
\begin{equation}\label{Aki}
A_{k,i}=A_{k-1,i+1}+\hbar A_{k-1,0}A_{1,i}
\end{equation}
for all $k,i\ge 1$.
\end{lem}
\begin{proof}
We need
\begin{equation}\label{tU}
w\T t^{i+1} +\hbar A_{1,i}w\T t^{0}+A_{2,i}w\T t^{-1}+\dots\in U.
\end{equation}
This means that the vector \eqref{tU} is a linear combination of the vectors \eqref{wti} starting with $w\T t^{i+1}$ and
$w\otimes t^0$ with coefficients 1 and $\hbar A_{1,i}$. 
This gives the desired system of equations.
\end{proof}

\begin{rem}
If $\hbar=0$ then equation \eqref{Aki} reduces to $A_{k,i}=A_{k-1,i+1}$ (block Hankel matrices).
\end{rem}

Now the ind-scheme $S^0$ is a union of the finite-type subschemes $S^0(N)$
cut out by the conditions $A_{k,i}=0$ if $k>N$ or $i\ge N$.
The relations \eqref{Aki} allow to express any operator $A_{k,i}$
with $k>0$ in terms of $A_{1,i}$. Hence the coordinate ring $\bC[S^0]$ is a
certain completion of the polynomial ring in $\hbar$ and the matrix
coefficients of $A_{1,0},A_{1,1},A_{1,2},\ldots$ The relations~\eqref{Aki} are
homogeneous if we set $\deg\hbar=0,\ \deg A_{k,i}=k+i$. If we had a relation
$\hbar f=0$ for $0\ne f\in\bC[S^0]$, it would imply $\hbar f^{(n)}=0$ for each
homogeneous component $f^{(n)}$ of $f$. Since each $f^{(n)}$ is an element of
the polynomial ring in $\hbar$ and the matrix
coefficients of $A_{1,0},A_{1,1},A_{1,2},\ldots$, we conclude $f^{(n)}=0$, hence
$f=0$. The proposition is proved.
\end{proof}

\begin{rem}
The special fiber ${\bf Gr}_0$ is a union of reduced
finite type schemes of growing dimensions. For $\hbar\ne0$, in the case
of $\mgl_1$, the fiber ${\bf Gr}_\hbar$ is a union of zero-dimensional schemes
with nilpotents. So the flatness of ${\bf Gr}$ over ${\mathbb A}^1$
holds only at the level of
inductive limit, and looks somewhat counterintuitive.
\end{rem}

\begin{rem}
Again let us consider the case of $\mgl_1$. Then for $\hbar\ne 0$ the
fiber lives in the projectivization of the Fock module $\bC[h_{-1},h_{-2},\dots]$
(the Fock module can be naturally identified with
the space of sections of the tautological line bundle). However, in the degenerate situation the
space of sections of the tautological line bundle is smaller (more precisely,
$\bC[h_{-1},h_{-2},\dots]/(h(z)^2)$, i.e. the ideal we quotient out is generated by all the coefficients
of the series $(\sum_{i>0} z^{i-1}h_{-i})^2$, see Example \ref{h(z)}).
 \end{rem}

\section{Quiver Grassmannians for (truncated) loop quivers}

\subsection{Basics}
For $N\geq 1$, let $A_N={\bC}[t]/(t^N)$ be the truncated polynomial ring, which we view as a finite-dimensional
and self-injective algebra over ${\bC}$ ($A_N$ being an indecomposable projective and injective module over itself).

For a finitely generated (possibly non-commutative) algebra $A$, a finite-dimensional $A$-module $M$ and an integer $k\leq\dim M$,
we denote by ${\rm Gr}^A_k(M)$ the Grassmannian of $k$-dimensional subrepresentations $U$ of $M$. This is a projective variety,
admitting a closed embedding into the Grassmannian ${\rm Gr}_k(M)$ of $k$-dimensional ${\bC}$-linear subspaces of $M$.

Let $W$ be an $m$-dimensional vector space, and consider $W\otimes A_N$ as an $A_N$-module, which is thus
projective and injective of dimension $mN$. Our aim is to study the following varieties and to relate them to degenerate
affine Grassmannians and to affine Schubert varieties:

\begin{definition} With notation as above, define
$$X^{(N)}_{k,m}={\rm Gr}_{k}^{A_N}(W\otimes A_N)$$
as the Grassmannian of $k$-dimensional $A_N$-subrepresentations of $W\otimes A_N$.
\end{definition}

\begin{example}\label{XGr}
$X^{(N)}_{Nn,2n}\simeq {\rm Gr}^a_N(\mgl_n)$ (see Remark \ref{B}).
\end{example}

Let us interpret this definition in linear algebra terms: consider the operator $\varphi$ on $W\otimes A_N$
which is given by $\varphi={\rm id}_W\otimes t$, where $t$ means multiplication by $t$ on $A_N$
(which is a regular nilpotent operator on $A_N$). Thus $\varphi$ is nilpotent and its Jordan canonical form consists of
$m$ nilpotent Jordan blocks of size $N$ each. The variety $X^{(N)}_{k,m}$ parametrizes $\varphi$-invariant $k$-dimensional
subspaces of $W\otimes A_N$, thus it is naturally a closed subvariety of ${\rm Gr}_k(W\otimes A_N)$.\\
Note that the group ${\rm GL}_m(A_N)\simeq {\rm GL}(W\otimes A_N)$ acts on $X^{(N)}_{k,m}$; this group is of dimension $mN$,
with reductive part ${\rm GL}_m({\bC})$ and unipotent radical $1+Mat_{m}(tA_N)$.\\
Our aim in the next subsection is to realize $X^{(N)}_{k,m}$ as a geometric quotient of a well-known variety by a (free) group action.
We recall the prototype for such quotient realizations. Let $V$ be a $k$-dimensional vector space.
Then the linear Grassmannian ${\rm Gr}_k(W\otimes A_N)$ can be viewed as the quotient of the set ${\rm Hom}^0(V,W\otimes A_N)$ of
injective linear maps from $V$ to $W\otimes A_N$ modulo the action of ${\rm GL}(V)$, thus
$${\rm Gr}_k(W\otimes A_N)\simeq {\rm Hom}^0(V,W\otimes A_N)/{\rm GL}(V).$$

\subsection{Interpretation as framed moduli}

We recall a result of classical invariant theory, see \cite{LBR}, Section 5.1:\\
For two vector spaces $V$ and $W$, consider the action of ${\rm GL}(V)$ on ${\rm End}(V)\times {\rm Hom}(V,W)$
by $g\cdot(\varphi,f)=(g\varphi g^{-1},fg^{-1})$. We call a pair $(\varphi,f)$ stable if $\bigcap_{i\geq 0}{\rm Ker}(f\varphi^i)=0$.
This is equivalent to the map $$\bigoplus_{i=0}^{\dim V-1}f\varphi^i:V\rightarrow W^{\dim V}$$ being injective.
Denote by $({\rm End}(V)\times {\rm Hom}(V,W))^{\rm st}$ the open subset of stable points; the action of ${\rm GL}(V)$ is free on this stable locus.

\begin{thm}
The set $({\rm End}(V)\times {\rm Hom}(V,W))^{\rm st}$ admits a geometric quotient by ${\rm GL}(V)$. It
embeds into the Grassmannian ${\rm Gr}_{\dim V}(W^{\dim V})$ of subspaces of $W^{\dim V}$ of dimension $\dim V$, by mapping the class of
$(\varphi,f)$ to the subspace ${\rm Im}(\bigoplus_{i=0}^{\dim V-1}f\varphi^i)$.
\end{thm}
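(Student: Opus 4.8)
The plan is to treat a pair $(\varphi,f)$ as a framed module: $\varphi$ makes $V$ a finite-dimensional $\bC[\varphi]$-module, and $f$ frames it by $W$. First I would record that the stability condition $\bigcap_{i\ge 0}{\rm Ker}(f\varphi^i)=0$ is precisely the injectivity of $\Phi_{\varphi,f}:=\bigoplus_{i=0}^{d-1}f\varphi^i\colon V\to W^{d}$, where $d=\dim V$; here one uses the Cayley--Hamilton observation that $\bigcap_{i\ge0}{\rm Ker}(f\varphi^i)=\bigcap_{i=0}^{d-1}{\rm Ker}(f\varphi^i)$, since $f\varphi^i$ for $i\ge d$ is a $\bC$-linear combination of the lower powers. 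Thus on the stable locus $U(\varphi,f):={\rm Im}(\Phi_{\varphi,f})$ is a genuine point of ${\rm Gr}_d(W^d)$, and this is the candidate map to the Grassmannian. A one-line computation $(fg^{-1})(g\varphi g^{-1})^i=f\varphi^ig^{-1}$ shows $\Phi_{g\cdot(\varphi,f)}=\Phi_{\varphi,f}\circ g^{-1}$, so $U(\varphi,f)$ is ${\rm GL}(V)$-invariant and the map descends to the orbit space.

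Next I would verify that the action is free on the stable locus: if $g$ fixes a stable $(\varphi,f)$ then $g$ commutes with $\varphi$ and $f(g-{\rm Id})=0$, whence $f\varphi^i(g-{\rm Id})=f(g-{\rm Id})\varphi^i=0$ for all $i$, and therefore ${\rm Im}(g-{\rm Id})\subset\bigcap_i{\rm Ker}(f\varphi^i)=0$, i.e. $g={\rm Id}$. The heart of the argument is to show that the fibres of $(\varphi,f)\mapsto U(\varphi,f)$ are exactly the ${\rm GL}(V)$-orbits. Given two stable pairs with $U(\varphi,f)=U(\varphi',f')$, since both $\Phi_{\varphi,f}$ and $\Phi_{\varphi',f'}$ are isomorphisms onto this common subspace, the composite $g:=\Phi_{\varphi,f}^{-1}\Phi_{\varphi',f'}\in{\rm GL}(V)$ is well defined, and comparing the zeroth components already gives $f'=fg$. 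The remaining task, reconstructing $\varphi$, is the main obstacle.

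To recover $\varphi$ I would exploit the grading $W^d=\bigoplus_{i=0}^{d-1}W_i$: the map $f$ is $\Phi_{\varphi,f}^{-1}$ followed by the projection to $W_0$, while $\varphi$ should correspond to the shift sending $W_i$ to $W_{i-1}$. The delicate point is that this shift does not literally preserve $U(\varphi,f)$, since it discards the top component $f\varphi^{d}$; one must feed in the Cayley--Hamilton relation expressing $\varphi^{d}$ through lower powers (equivalently, replace the naive shift by its companion-matrix form determined by the characteristic polynomial of $\varphi$) so that the induced endomorphism of $U(\varphi,f)$ becomes genuinely conjugate to $\varphi$. Carrying this out yields $g\varphi'=\varphi g$, hence $\varphi'=g^{-1}\varphi g$, and together with $f'=fg$ that the two pairs lie in one orbit; this is the step where the stability hypothesis is used in full. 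Finally, for the geometric quotient I would note that a free action together with injectivity of the orbit map lets one present the stable locus as a principal ${\rm GL}(V)$-bundle over the image: this can be done either by constructing local sections over the standard charts of ${\rm Gr}_d(W^d)$ cut out by non-vanishing Pl\"ucker-type minors, or by observing that the stable locus is exactly the properly stable points for a suitable linearization of the ${\rm GL}(V)$-action and invoking the existence theorem for GIT quotients of stable loci. Either route identifies the quotient with the locally closed image of the orbit map in ${\rm Gr}_d(W^d)$, which is the asserted embedding.
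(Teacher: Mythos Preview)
The paper does not prove this theorem; it simply quotes it as a known result of classical invariant theory with a reference to \cite{LBR}, Section 5.1, so there is no in-paper argument to compare your proposal against.

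That said, your proposal contains a genuine gap. The steps establishing that stability is equivalent to injectivity of $\Phi_{\varphi,f}$, that $U(\varphi,f)$ is ${\rm GL}(V)$-invariant, and that the action is free on the stable locus are all correct. The problem is the injectivity of the orbit map: it is simply \emph{false} in the generality of the theorem as stated. Take $\dim V=d$ and $\dim W=1$; then ${\rm Gr}_d(W^d)$ is a single point, while the quotient of the stable locus is $d$-dimensional (it is parametrized by the characteristic polynomial of $\varphi$, since stability forces $f$ to be a cyclic covector). More concretely, with $d=2$, $W=\bC$, the pairs $\bigl(\varphi=\left(\begin{smallmatrix}0&1\\0&0\end{smallmatrix}\right),\ f=(1,0)\bigr)$ and $\bigl(\varphi'=\left(\begin{smallmatrix}1&0\\0&2\end{smallmatrix}\right),\ f'=(1,1)\bigr)$ are both stable, both have image all of $\bC^2$, yet $\varphi$ and $\varphi'$ are not conjugate. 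Your proposed Cayley--Hamilton fix cannot rescue this: the characteristic polynomial of $\varphi$ is precisely the extra datum that the subspace $U(\varphi,f)$ fails to remember, so there is no way to build a companion-type shift from $U$ alone.

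In other words, the ``embeds'' clause of the theorem, read literally as injectivity of the stated map, is inaccurate for arbitrary $\varphi$; presumably the cited source packages the statement differently (e.g.\ jointly with the map to the affine invariant-theory quotient recording the characteristic polynomial). What the paper actually \emph{uses} is only the restriction to $\mathcal{N}^{(N)}(V)$, and there injectivity does hold: with $\varphi^N=0$ the shift operator genuinely preserves $U(\varphi,f)$ and your reconstruction argument goes through---indeed this is exactly what the Lemma immediately following the Corollary establishes. So your strategy is sound once one imposes nilpotency, but cannot prove the embedding assertion for general $\varphi\in{\rm End}(V)$.
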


Let $\mathcal{N}(V)$ be the closed subvariety of nilpotent operators in ${\rm End}(V)$, and let $\mathcal{N}^{(N)}(V)$ be
the set of operators $\varphi$ such that $\varphi^N=0$, for $N\geq 1$.

As restrictions of (geometric) quotients to invariant closed subvarieties are again (geometric) quotients, we can consider the
${\rm GL}(V)$-invariant subset $(\mathcal{N}^{(N)}(V)\times{\rm Hom}(V,W))^{\rm st}$ of $({\rm End}(V)\times {\rm Hom}(V,W))^{\rm st}$ and get,
after some reindexing and using $N$-nilpotency, the following result:

\begin{cor}
The set $(\mathcal{N}^{(N)}(V)\times {\rm Hom}(V,W))^{\rm st}$ admits a geometric quotient by ${\rm GL}(V)$.
This quotient embeds into the Grassmannian ${\rm Gr}_{\dim V}(W\otimes A_N)$ of $dim V$-dimensional subspaces of
$W[[t]]/(t^N)\simeq W\otimes A_N$ by mapping the class of $(\varphi,f)$ to the subspace ${\rm Im}(\sum_{i=0}^{N-1}f\varphi^{N-1-i}t^i)$.
\end{cor}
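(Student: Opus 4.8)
The statement is obtained from the preceding Theorem by the two operations announced in the text: restriction of the geometric quotient to an invariant closed subvariety, and a reindexing of the embedding map. The plan is therefore to extract exactly what is needed from the Theorem and to verify that the reindexed map still embeds. First I would record that $\mathcal{N}^{(N)}(V)$ is a ${\rm GL}(V)$-invariant closed subvariety of ${\rm End}(V)$: it is cut out by the polynomial equation $\varphi^N=0$ in the matrix entries, and conjugation preserves the condition $\varphi^N=0$. Hence $\mathcal{N}^{(N)}(V)\times{\rm Hom}(V,W)$ is closed and invariant in ${\rm End}(V)\times{\rm Hom}(V,W)$, and intersecting it with the open invariant stable locus exhibits $(\mathcal{N}^{(N)}(V)\times{\rm Hom}(V,W))^{\rm st}$ as a closed invariant subvariety of $({\rm End}(V)\times{\rm Hom}(V,W))^{\rm st}$. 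The existence of the geometric quotient is then immediate from the Theorem together with the recalled principle that the restriction of a geometric quotient to a ${\rm GL}(V)$-invariant closed subvariety is again a geometric quotient.

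Next I would analyze the embedding on the nilpotent locus. Since $\varphi^N=0$, the operators $f\varphi^i$ vanish for all $i\ge N$, so that ${\rm Ker}(f\varphi^i)=V$ for such $i$; consequently the stability condition $\bigcap_{i\ge 0}{\rm Ker}(f\varphi^i)=0$ already forces $\bigcap_{i=0}^{N-1}{\rm Ker}(f\varphi^i)=0$. Thus the map $\Psi\colon V\to W\otimes A_N$ defined by $\Psi(v)=\sum_{i=0}^{N-1}(f\varphi^{N-1-i}v)\otimes t^i$ is injective and its image is a $\dim V$-dimensional subspace. Because this map differs from the one in the Theorem only by discarding the coordinates $f\varphi^i$ with $i\ge N$, which are forced to vanish, and by the fixed linear reindexing isomorphism $W^N\xrightarrow{\sim}W\otimes A_N$, $i\mapsto N-1-i$, the resulting assignment $[(\varphi,f)]\mapsto{\rm Im}(\Psi)$ separates ${\rm GL}(V)$-orbits exactly as in the Theorem and therefore defines an embedding of the quotient into ${\rm Gr}_{\dim V}(W\otimes A_N)$ (indeed a closed one, the image being the projective variety $X^{(N)}_{\dim V,\dim W}$).

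Finally I would explain, and check, that the specific order-reversal $i\mapsto N-1-i$ is precisely what makes the construction compatible with the $A_N$-module structure, which is the point of the reindexing and the link to the varieties $X^{(N)}_{k,m}$ of the previous subsection. A direct computation, using $t^N=0$ in $A_N$ and $\varphi^N=0$, gives $t\cdot\Psi(v)=\Psi(\varphi v)$: shifting $t^i\mapsto t^{i+1}$ kills the top term by $t^N=0$, while $\Psi(\varphi v)$ kills its bottom term by $\varphi^N=0$, and the two shifted sums coincide. Hence $\Psi$ intertwines $\varphi$ with multiplication by $t$, so ${\rm Im}(\Psi)$ is an $A_N$-submodule of $W\otimes A_N$. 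I expect no genuine obstacle in this argument: its entire content is the restriction principle plus this elementary intertwining identity, and the only thing that requires care is getting the index reversal correct, since the reversed ordering is exactly what guarantees $t$-invariance of the image.
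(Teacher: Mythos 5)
Your proof is correct and takes essentially the same route as the paper: the restriction-of-geometric-quotients principle applied to the closed ${\rm GL}(V)$-invariant subvariety $\mathcal{N}^{(N)}(V)\times{\rm Hom}(V,W)$, together with the observation that $N$-nilpotency lets one pass from the Theorem's embedding to the reindexed map $\sum_{i=0}^{N-1}f\varphi^{N-1-i}t^i$ via a fixed linear identification, and your intertwining identity $t\cdot\Psi(v)=\Psi(\varphi v)$ is precisely the first half of the paper's subsequent lemma. The only caveat is your parenthetical claim that the image is exactly $X^{(N)}_{\dim V,\dim W}$: that requires the converse direction (every $t$-invariant subspace arises from a stable pair), which you do not prove and which the paper handles in the next lemma, but it is not part of the corollary's statement, so this does not affect the validity of your argument.
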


The main technical result of this section is the following (compare with \cite{L}, section 2):

\begin{lem} The image of the above embedding coincides with the subvariety $X^{(N)}_{\dim V,\dim W}$ of ${\rm Gr}_{\dim V}(W\otimes A_N)$.
\end{lem}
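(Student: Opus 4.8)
The plan is to establish the asserted equality by proving the two inclusions separately. Throughout write $k=\dim V$ and $m=\dim W$, let $\psi={\rm id}_W\otimes t$ be the operator of multiplication by $t$ on $W\otimes A_N$, and abbreviate by $\iota_{\varphi,f}=\sum_{i=0}^{N-1}f\varphi^{N-1-i}t^i\colon V\to W\otimes A_N$ the map from the preceding Corollary. Since the $A_N$-module structure on $W\otimes A_N$ is encoded by $\psi$, the variety $X^{(N)}_{k,m}$ is exactly the set of $k$-dimensional $\psi$-invariant subspaces. Thus I must show, first, that ${\rm Im}(\iota_{\varphi,f})$ is $\psi$-invariant for every stable pair $(\varphi,f)$ with $\varphi^N=0$, and, second, that every $\psi$-invariant $k$-dimensional subspace arises as such an image.

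For the inclusion of the image into $X^{(N)}_{k,m}$ the crucial input is the intertwining identity $\psi\circ\iota_{\varphi,f}=\iota_{\varphi,f}\circ\varphi$. Indeed, applying $\psi$ shifts every $t^i$ to $t^{i+1}$ and kills the $i=N-1$ summand because $t^N=0$, whereas precomposing with $\varphi$ turns $\varphi^{N-1-i}$ into $\varphi^{N-i}$ and kills the $i=0$ summand because $\varphi^N=0$; after reindexing, both sides send $v$ to $\sum_{j=1}^{N-1}f\varphi^{N-j}v\otimes t^j$. This identity yields $\psi({\rm Im}\,\iota_{\varphi,f})={\rm Im}(\iota_{\varphi,f}\circ\varphi)\subseteq{\rm Im}(\iota_{\varphi,f})$, so the image is $\psi$-invariant; as the Corollary already guarantees the image has dimension $k$, it is a point of $X^{(N)}_{k,m}$.

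For the reverse inclusion I start from a $\psi$-invariant subspace $U\in X^{(N)}_{k,m}$ and reconstruct a pair mapping onto it. Set $V=U$, take $\varphi=\psi|_U$ (nilpotent with $\varphi^N=0$ since $t^N=0$), and let $f=\pi_{N-1}|_U$, where $\pi_j\colon W\otimes A_N\to W$ reads off the coefficient of $t^j$. The decisive elementary identity is $\pi_{N-1}\circ\psi^i=\pi_{N-1-i}$, whence $f\varphi^i=\pi_{N-1-i}|_U$. Stability is then immediate, because $\bigcap_{i=0}^{N-1}{\rm Ker}(f\varphi^i)=\bigcap_{j=0}^{N-1}{\rm Ker}(\pi_j|_U)=0$, the $\pi_j$ jointly recovering every vector of $U$. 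Substituting into the definition of $\iota_{\varphi,f}$ and reindexing gives $\iota_{\varphi,f}(v)=\sum_{i=0}^{N-1}\pi_i(v)\otimes t^i=v$ for all $v\in U$; that is, $\iota_{\varphi,f}$ is nothing but the inclusion $U\hookrightarrow W\otimes A_N$, so its image is $U$ itself.

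The only real difficulty is combinatorial bookkeeping: one must pick the correct framing map $f$ in the surjectivity step --- the projection onto the \emph{top} coefficient $t^{N-1}$ rather than the constant term --- and then keep straight that the shift produced by multiplying by $t$ matches, after reindexing, the shift produced by applying $\varphi$. Once the two elementary identities $\pi_{N-1}\circ\psi^i=\pi_{N-1-i}$ and $\psi\circ\iota_{\varphi,f}=\iota_{\varphi,f}\circ\varphi$ are verified, both inclusions follow from the telescoping computations above. Finally, since the embedding furnished by the Corollary is a closed embedding, this set-theoretic coincidence of the image with $X^{(N)}_{k,m}$ identifies the two as varieties.
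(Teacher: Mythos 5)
Your proof is correct. The forward inclusion is exactly the paper's argument: both rest on the intertwining identity $t\circ\iota_{\varphi,f}=\iota_{\varphi,f}\circ\varphi$, which gives $t\cdot{\rm Im}(\iota_{\varphi,f})={\rm Im}(\iota_{\varphi,f}\circ\varphi)\subseteq{\rm Im}(\iota_{\varphi,f})$. For the reverse inclusion, however, you take a genuinely different route. The paper starts from an arbitrary injective linear map $\sum_{i=0}^{N-1}f_it^i\colon V\to W\otimes A_N$ with $t$-invariant image, uses the invariance to produce an endomorphism $\varphi$ of $V$ satisfying $t\sum_if_it^i=(\sum_if_it^i)\varphi$, compares coefficients to solve $f_i=f_{N-1}\varphi^{N-1-i}$, and only then deduces $\varphi^N=0$ (and stability) from injectivity, concluding that $(\varphi,f_{N-1})$ maps to $U$. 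You instead choose the canonical parametrization $V=U$ (harmless, since the quotient by ${\rm GL}(V)$ makes the choice of identification of $U$ with the abstract $V$ irrelevant), set $\varphi=t|_U$ and $f=\pi_{N-1}|_U$, and use the identity $\pi_{N-1}\circ t^i=\pi_{N-1-i}$ to see that $\iota_{\varphi,f}$ is literally the inclusion $U\hookrightarrow W\otimes A_N$. This buys you two simplifications: $\varphi^N=0$ is automatic (it is a restriction of $t$, and $t^N=0$ on $W\otimes A_N$), and stability reduces to the triviality that a vector all of whose $t$-coefficients vanish is zero; the paper has to extract both facts from the coefficient equations and injectivity. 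What the paper's version buys in exchange is slightly more information: it shows that \emph{every} injective parametrization of a $t$-invariant subspace has all its coefficients determined by the top one via $f_i=f_{N-1}\varphi^{N-1-i}$, i.e.\ it describes all preimages rather than exhibiting one. For the lemma as stated, either argument is complete.
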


\begin{proof} For $\varphi$ and $f$ such that $\varphi^N=0$, the image of $\sum_if\varphi^{N-1-i}t^i$ is $t$-invariant, namely
$$t\sum_{i=0}^{N-1}f\varphi^{N-1-i}t^i=\sum_{i=0}^{N-1}f\varphi^{N-1-i}t^{i+1}=\sum_{i=0}^{N-1}f\varphi^{N-i}t^i=(\sum_{i=0}^{N-1}f\varphi^{N-1-i}t^i)\varphi,$$
and thus $$t\cdot{\rm Im}(\sum_if\varphi^{N-1-i}t^i)={\rm Im}(t\sum_if\varphi^{N-1-i}t^i)=$$
$$={\rm Im}((\sum_i f\varphi^{N-1-i}t^i)\varphi)\subset{\rm Im}(\sum_if\varphi^{N-1-i}t^i).$$
Conversely, let the image of an injective map $\sum_{i=0}^{N-1}f_it^i$ be $t$-invariant. This means that there exists an endomorphism $\varphi$ such that
$$\sum_{i=1}^{N}f_{i-1}t^i=t\sum_{i=0}^{N-1}f_it^i=(\sum_{i=0}^{N-1}f_it^i)\varphi=\sum_{i=0}^{N-1}f_i\varphi t^i.$$
Comparing coefficients, this is equivalent to
$$f_0\varphi=0,\; f_0=f_1\varphi,\, f_1=f_2\varphi,\,\ldots\, f_{N-2}=f_{N-1}\varphi,$$
thus $f_i=f_{N-1}\varphi^{N-1-i}$ for all $i=0,\ldots,N-1$, and $f_{N-1}\varphi^N=0$. The latter conditions means that
${\rm Im}(\varphi^{i+1})\subset{\rm Ker}(f_{N-1}\varphi^{N-1-i})$ for all $i$, thus
$${\rm Im}(\varphi^N)\subset \bigcap_{i=0}^{N-1}{\rm Ker}(f_{N-1}\varphi^{N-1-i})=\bigcap_{i=0}^{N-1}{\rm Ker}(f_i)=0$$
by injectivity of $\sum_if_it^i$, which means $\varphi^N=0$. Thus, the pair $(\varphi,f_{N-1})$ maps to the image of $\sum_if_it^i$.\end{proof}

We have thus proved:

\begin{cor}\label{maincor} The variety $X^{(N)}_{\dim V,\dim W}$ is isomorphic to the quotient $$(\mathcal{N}^{(N)}(V)\times{\rm Hom}(V,W))^{\rm st}/{\rm GL}(V).$$
\end{cor}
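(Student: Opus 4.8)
The plan is to assemble this statement directly from the two immediately preceding results, as the substantive work has already been carried out there. First I would recall the preceding Corollary, which produces a geometric quotient of $(\mathcal{N}^{(N)}(V)\times{\rm Hom}(V,W))^{\rm st}$ by ${\rm GL}(V)$ together with an embedding of this quotient into ${\rm Gr}_{\dim V}(W\otimes A_N)$ sending the class of a stable pair $(\varphi,f)$ to the subspace ${\rm Im}(\sum_{i=0}^{N-1}f\varphi^{N-1-i}t^i)$. The existence of the quotient and the fact that this map is a genuine embedding rest on the classical invariant-theory Theorem, restricted (as noted) to the ${\rm GL}(V)$-invariant closed subvariety $\mathcal{N}^{(N)}(V)$ of $N$-nilpotent operators, where the ${\rm GL}(V)$-action on the stable locus is free.

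Next I would invoke the Lemma above, which identifies the image of precisely this embedding with the subvariety $X^{(N)}_{\dim V,\dim W}\subset{\rm Gr}_{\dim V}(W\otimes A_N)$. Combining the two, the geometric quotient $(\mathcal{N}^{(N)}(V)\times{\rm Hom}(V,W))^{\rm st}/{\rm GL}(V)$ maps, as a morphism of varieties, bijectively onto $X^{(N)}_{\dim V,\dim W}$.

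To conclude I would upgrade this to an isomorphism. Since the map in question is an embedding in the sense of the invariant-theory Theorem and its image $X^{(N)}_{\dim V,\dim W}$ is a closed, reduced subvariety of the Grassmannian (recall $X^{(N)}_{k,m}$ admits a closed embedding into ${\rm Gr}_k(W\otimes A_N)$), the induced morphism from the reduced geometric quotient onto $X^{(N)}_{\dim V,\dim W}$ is an isomorphism. This gives exactly the asserted $X^{(N)}_{\dim V,\dim W}\simeq(\mathcal{N}^{(N)}(V)\times{\rm Hom}(V,W))^{\rm st}/{\rm GL}(V)$.

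I expect essentially no obstacle at this final step: the real content already resides in the Lemma's two-sided computation, which matches $t$-invariance of the image with $N$-nilpotency of $\varphi$ through the coefficient relations $f_i=f_{N-1}\varphi^{N-1-i}$, and in the invariant-theoretic Theorem that furnishes the geometric quotient on the stable locus. The one point worth flagging is the distinction between a bijective morphism and an immersion; the plan is to confirm that the construction supplies a closed immersion onto $X^{(N)}_{\dim V,\dim W}$ rather than merely a set-theoretic bijection, so that the passage to the image is harmless.
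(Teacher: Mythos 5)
Your proposal is correct and matches the paper exactly: the paper states this corollary with the words ``We have thus proved,'' i.e.\ it is obtained precisely by combining the preceding invariant-theoretic corollary (geometric quotient plus embedding into ${\rm Gr}_{\dim V}(W\otimes A_N)$) with the lemma identifying the image of that embedding with $X^{(N)}_{\dim V,\dim W}$. Your extra remark that the map is an embedding (not merely a bijective morphism), so restriction to its closed image is an isomorphism, is exactly the point implicitly carried by the word ``embeds'' in the cited theorem.
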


\subsection{Geometric consequences}

The above corollary allows us to easily derive various geometric properties of the varieties $X^{(N)}_{k,m}$:

\begin{cor} \label{geocon}
The variety $X^{(N)}_{k,m}$ is irreducible, normal, Cohen-Macaulay with rational singularities. It has dimension
$\dim\mathcal{N}^{(N)}({\bC}^k)+k(m-k)$. In particular, it has dimension $k(k-1)+k(m-k)=k(m-1)$ for $N\geq k$.
\end{cor}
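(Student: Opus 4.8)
The plan is to deduce everything from Corollary \ref{maincor}, which presents $X^{(N)}_{k,m}$ (with $\dim V=k$ and $\dim W=m$) as the geometric quotient of the stable locus $(\mathcal{N}^{(N)}(V)\times\mathrm{Hom}(V,W))^{\mathrm{st}}$ by the \emph{free} action of $\mathrm{GL}(V)$. The strategy is to establish all the desired properties for the total space, to check that they survive restriction to the open stable locus, and then to descend them along the quotient map.

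First I would analyze the key geometric input, namely $\mathcal{N}^{(N)}(V)=\{\varphi\in\mathrm{End}(V):\varphi^N=0\}$. This is exactly the union of the nilpotent $\mathrm{GL}(V)$-orbits whose Jordan type $\lambda$ has all parts at most $N$. Since $\{\lambda:\lambda_1\le N\}$ is a lower set in the dominance order (from $\lambda\le\mu$ one reads off $\lambda_1\le\mu_1$ at the first step), it has a unique dominant maximum $\mu=(N,\dots,N,r)$, where $k=qN+r$ with $0\le r<N$; hence $\mathcal{N}^{(N)}(V)=\overline{\mathcal{O}_\mu}$ is a single nilpotent orbit closure. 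In particular it is irreducible, and by the type-$A$ results on nilpotent orbit closures it is normal (Kraft--Procesi), Cohen--Macaulay, and has rational singularities. For $N\ge k$ every partition of $k$ already has parts $\le k\le N$, so $\mathcal{N}^{(N)}(V)=\mathcal{N}(V)$ is the full nilpotent cone of $\mathfrak{gl}_k$, of dimension $k^2-k=k(k-1)$.

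Next I would propagate these properties. As $\mathrm{Hom}(V,W)$ is a smooth affine space, the product $\mathcal{N}^{(N)}(V)\times\mathrm{Hom}(V,W)$ is again irreducible, normal, Cohen--Macaulay with rational singularities, all of these being preserved under taking a product with affine space. The stable locus is an open subvariety; it is nonempty whenever $X^{(N)}_{k,m}\ne\emptyset$ (each $t$-invariant subspace arises from a stable pair), hence dense, so it inherits irreducibility together with the remaining, local, properties. Finally the quotient map $(\mathcal{N}^{(N)}(V)\times\mathrm{Hom}(V,W))^{\mathrm{st}}\to X^{(N)}_{k,m}$ is a principal $\mathrm{GL}(V)$-bundle, hence smooth and faithfully flat with smooth fibers; normality, the Cohen--Macaulay property, and rational singularities all descend along such a morphism, and irreducibility descends trivially. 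The dimension count then reads
\[
\dim X^{(N)}_{k,m}=\dim\mathcal{N}^{(N)}(V)+\dim\mathrm{Hom}(V,W)-\dim\mathrm{GL}(V)=\dim\mathcal{N}^{(N)}(\bC^k)+km-k^2,
\]
which is the asserted $\dim\mathcal{N}^{(N)}(\bC^k)+k(m-k)$, specializing to $k(k-1)+k(m-k)=k(m-1)$ when $N\ge k$.

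The main obstacle, and really the only nonformal ingredient, is the normality, Cohen--Macaulayness, and rational singularities of the orbit closure $\mathcal{N}^{(N)}(V)$; once these are quoted from the literature on type-$A$ nilpotent cones, everything else (product with affine space, restriction to a dense open, descent along a free quotient) is standard. I would flag that the descent of these properties uses that the quotient is not merely categorical but a smooth surjection, which is guaranteed precisely by the freeness of the $\mathrm{GL}(V)$-action asserted in the invariant-theory theorem preceding Corollary \ref{maincor}.
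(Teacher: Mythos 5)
Your proposal is correct and follows essentially the same route as the paper: both establish irreducibility, normality, Cohen--Macaulayness and rational singularities for $\mathcal{N}^{(N)}(\bC^k)$ via Kraft--Procesi (it being the closure of a single conjugacy class), pass to the open dense stable locus, descend along the free $\mathrm{GL}(V)$-quotient of Corollary \ref{maincor}, and finish with the same dimension count. The only cosmetic difference is the descent mechanism: the paper invokes Boutot's theorem for normality and rational singularities (valid for arbitrary reductive quotients) and the Zariski-local triviality coming from specialness of $\mathrm{GL}_k$ for the Cohen--Macaulay property, whereas you use smooth, faithfully flat descent along the principal bundle uniformly for all three properties --- both are valid.
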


\begin{proof} Every variety $\mathcal{N}^{(N)}({\bC}^k)$ is irreducible, normal, Cohen-Macaulay with rational singularities by \cite{KP}, Theorem 0.1, since it is the closure of a single conjugacy class.
These properties are preserved under passing to open subsets and geometric ${\rm GL}_k({\bC})$-quotients.
Namely, for irreducibility this is clear, whereas normality and rational singularities are preserved under
arbitrary quotients (the latter by Boutot's theorem (\cite{B}, Corollaire)). For the Cohen-Macaulay property, we use the fact that a geometric
${\rm GL}_k({\bC})$-quotient is a principal bundle under a special group, and thus Zariski locally trivial (see \cite{Se}, Theoreme 2).
Finally, the dimension formula follows by a direct calculation.\end{proof}

The natural sequence of embeddings of the varieties $\mathcal{N}^{(N)}(V)$, stabilizing in $\mathcal{N}^{(\dim V)}(V)=\mathcal{N}(V)$,
induces a chain of embeddings $$X_{k,m}^{(1)}\subset X_{k,m}^{(2)}\subset \ldots\subset X_{k,m}^{(k)}=X_{k,m}^{(k+1)}=\ldots,$$
whose limit we define as $X_{k,m}=X_{k,m}^{(N)}$ for $N\geq k$, which is the quotient of stable pairs $(\varphi,f)$ for $\varphi$
an arbitrary nilpotent operator by the action of ${\rm GL}_k({\bC})$.

\subsection{Examples}

We give some examples of the varieties $X_{k,m}$. First, it is easy to see that $X_{1,m}\simeq{\bf P}^{m-1}$.
Second, let us consider the case $k=2$. By the above, we thus consider the set of $2m\times 2$-matrices of rank $2$ of the form
$$\left[\begin{array}{l}AB\\ A\end{array}\right]$$
for an $m\times 2$-matrix $A$ and a nilpotent $2\times 2$-matrix $B$, up to the ${\rm GL}_2$-action on columns.
We can embed this variety into projective space via the Pl\"ucker embedding. Namely, we choose homogeneous coordinates
$x_{i,j}$ for $1\leq i<j\leq 2m$ and map the above matrix to the collection of its $2\times 2$-minors $T_{i,j}$.
A priori these obey the Pl\"ucker relations
$$T_{i,k}T_{j,l}=T_{i,j}T_{k,l}+T_{i,l}T_{j,k}$$
for all $1\leq i<j<k<l\leq 2m$. Since $\det(B)=0$, we have $T_{i,j}=0$ for $1\leq i<j\leq m$. Since ${\rm tr}(B)=0$,
we have $T_{i,j+m}=T_{j,i+m}$ for all $1\leq i,j\leq m$. We conclude that $X_{2,m}$ can be realized as the set of points
in projective space with coordinates $x_{i,j}$ for $1\leq i<j\leq 2m$ subject to the relations
$$x_{i,k}x_{j,l}=x_{i,j}x_{k,l}+x_{i,l}x_{j,k}\mbox{ for }1\leq i<j<k<l\leq 2m,$$
$$x_{i,j}=0\mbox{ for }1\leq i<j\leq m,$$
$$x_{i,j+m}=x_{j,i+m}\mbox{ for }1\leq i,j\leq m.$$
For example, in case $m=2$ we can eliminate the variables $x_{1,2}$ and $x_{2,3}$ and realize $X_{2,2}$
as the singular surface in ${\bf P}^3$ with coordinates $x_{1,3},x_{1,4},x_{2,4},x_{3,4}$ and defining equation $x_{1,3}x_{2,4}=x_{1,4}^2$
(then $(0:0:0:1)$ is an isolated singularity).\\
For general $m$, we can eliminate the variables $x_{i,j}$ for $1\leq i<j\leq m$ and $x_{j,i+m}$ for $1\leq i<j\leq m$.
We can rename the remaining variables as $v_{i,j}=x_{i+m,j+m}$ for $1\leq i<j\leq m$ and $w_{i,j}=x_{i,j+m}$ for $1\leq i,j\leq m$ and
rewrite the above relations in these terms. Computer experiments with $m\leq 5$ suggest the following:

\begin{conj} The variety $X_{2,m}$ is isomorphic to the closed subvariety of projective space with coordinates $v_{i,j}$
for $1\leq i<j\leq m$ and $w_{i,j}$ for $1\leq i\leq j\leq m$ given by the following equations:
\begin{enumerate}
\item the symmetric matrix $W=(w_{i,j})$ has rank one,
\item $WZ=0$ for the matrix $Z$ with $m$ rows, with columns indexed by tuples $(j,k,l)$ for $1\leq j<k<l\leq m$, and with entries
$$Z_{i,(j,k,l)}=\left\{\begin{array}{lll}v_{k,l}&,&i=j,\\
-v_{j,l}&,&i=k,\\
v_{j,k}&,&i=l,\\
0&,&i\not=j,k,l\end{array}\right.$$
\item the Pl\"ucker relations for the $v_{i,j}$.
\end{enumerate}
\end{conj}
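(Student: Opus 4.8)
The plan is to prove the stated description by first establishing that the three families of equations hold identically on $X_{2,m}$, and then showing that together they cut out nothing more than $X_{2,m}$ itself. Throughout I use the model of Corollary \ref{maincor}: a point of $X_{2,m}$ is the ${\rm GL}_2$-class of a stable pair $(\varphi,f)$ with $\varphi=B$ a nilpotent $2\times 2$ matrix and $f=A\in\mathrm{Hom}(\bC^2,W)$, recorded by the $2\times 2$ minors of the $2m\times 2$ matrix $\left[\begin{array}{l}AB\\ A\end{array}\right]$. In the notation of the statement $v_{i,j}$ is the $(i,j)$ minor of $A$, and $w_{i,j}=\det\bigl[\begin{smallmatrix}(AB)_i\\ a_j\end{smallmatrix}\bigr]$, where $a_j$ is the $j$-th row of $A$.

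First I would verify the forward inclusion $X_{2,m}\subseteq Y$, where $Y$ denotes the closed subscheme defined by (1)--(3). Condition (3) is automatic, since the $v_{i,j}$ are by construction the Pl\"ucker coordinates of $[A]$. For (1), writing a nonzero nilpotent $B$ as $B=pq^{T}$ with $q^{T}p=0$, a direct computation gives $W=\lambda\,(Ap)(Ap)^{T}$ for a scalar $\lambda$, which is symmetric of rank $\le 1$ (and $B=0$ gives $W=0$); this is exactly (1). Condition (2) follows from the key linear identity that any three rows $a_j,a_k,a_l\in\bC^2$ of $A$ are dependent via $v_{k,l}a_j-v_{j,l}a_k+v_{j,k}a_l=0$. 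Writing $w_{i,m}=(Ap)_i\langle\eta,a_m\rangle$ for the covector $\eta=(-q_2,q_1)$, pairing this identity with $\eta$ and multiplying by $(Ap)_i$ yields $w_{i,j}v_{k,l}-w_{i,k}v_{j,l}+w_{i,l}v_{j,k}=0$, i.e.\ $WZ=0$.

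Next I would identify $Y$ set-theoretically by reading the equations geometrically: (3) says $v$ is the Pl\"ucker vector of a $2$-plane $P\subset\bC^m$ (or $v=0$); (1) says $W=\mu\, dd^{T}$ is symmetric of rank $\le 1$; and (2) says $\mathrm{Im}(W)\subset P$, since the columns $v_{k,l}e_j-v_{j,l}e_k+v_{j,k}e_l$ of $Z$ annihilate exactly the vectors lying in $P$. One then reconstructs a stable pair from such data: choose $A$ with column space $P$ (of rank $1$ if $v=0$) and a nilpotent $B$ with image $\bC d$ scaled so that the associated $W$ is the given one, stability being automatic for $A$ of rank $2$ and arranged by hand in the stratum $v=0$. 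This makes $X_{2,m}\to Y$ onto. Combined with the forward inclusion, the dimension count $\dim X_{2,m}=2(m-1)$ of Corollary \ref{geocon}, and irreducibility of both sides (that of $Y$ following since its open stratum $\{v\ne 0,\,W\ne 0\}$ is the image of the irreducible variety of stable pairs with $A$ of rank $2$ and $B\ne 0$, while the boundary strata $\{W=0\}\cong{\rm Gr}(2,m)$ and $\{v=0\}\cong\bP^{m-1}$ lie in its closure), this gives $X_{2,m}=Y$ as closed subsets of projective space, hence an isomorphism of the associated reduced varieties.

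The hard part will be the scheme-theoretic refinement: that the listed equations generate the full homogeneous prime ideal of $X_{2,m}$ rather than merely an ideal with the correct radical, which is precisely what the computer experiments for $m\le 5$ are testing. Equivalently, writing $J$ for the ideal generated by (1)--(3), one must show $R/J$ is reduced, after which the surjective closed immersion $X_{2,m}\hookrightarrow Y=\mathrm{Spec}\,(R/J)$ into a reduced $Y$ is forced to be an isomorphism. Since $X_{2,m}$ is already known to be normal and Cohen--Macaulay with rational singularities (Corollary \ref{geocon}), the remaining issue is purely ideal-theoretic, and I would attempt it either by presenting $J$ in a determinantal or ladder form and combining known Cohen--Macaulayness with generic reducedness on the smooth open stratum (so that $S_1+R_0$ forces reducedness), or by a Gr\"obner degeneration exhibiting a squarefree initial ideal. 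Establishing one of these is, I expect, the genuine obstacle, which is why the statement remains a conjecture.
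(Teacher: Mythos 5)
The first thing to say is that the paper contains \emph{no} proof of this statement: it appears only as a conjecture, supported by computer experiments for $m\leq 5$, so there is no argument of record to compare yours against. Your set-theoretic reasoning is correct, and it already goes beyond what the paper establishes. In the forward direction, writing a nonzero nilpotent $B$ as $pq^{T}$ with $q^{T}p=0$ indeed forces $(-q_2,q_1)^{T}$ to be proportional to $p$, whence $W=\lambda(Ap)(Ap)^{T}$ is symmetric of rank at most one; and the relations $w_{i,j}v_{k,l}-w_{i,k}v_{j,l}+w_{i,l}v_{j,k}=0$ constituting $WZ=0$ are nothing but the Pl\"ucker relations of ${\rm Gr}_2({\mathbb C}^{2m})$ for the index quadruples $(i,j+m,k+m,l+m)$, so they hold on $X_{2,m}$ automatically. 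In the converse direction, your reading of the equations is also correct: (3) says $v$ is decomposable, (1) says $W$ is proportional to $dd^{T}$, and (2) says $d$ lies in the $2$-plane $P$ determined by $v$, since the common annihilator of the columns of $Z$ is exactly $P$; your reconstruction of a stable pair, including the rank-one-$A$ points over the stratum $v=0$ (which sweep out the Veronese ${\mathbb P}^{m-1}$ of rank-one symmetric $W$), then shows every point of the zero locus is hit. This identifies $X_{2,m}$ with the reduced zero locus of equations (1)--(3).

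The gap --- which, to your credit, you name yourself --- is that this does not settle the conjecture in the strength the authors presumably intend. What computer experiments for $m\leq 5$ naturally test is the ideal-theoretic statement: that (1)--(3) generate the saturated homogeneous prime ideal of $X_{2,m}$, equivalently that the scheme they cut out is reduced; set-theoretic coincidence of the two closed subsets leaves open that the equations define $X_{2,m}$ with a non-reduced or embedded structure. Moreover, the first of your two suggested routes for closing this is circular as sketched: the normality and Cohen--Macaulayness of Corollary \ref{geocon} are properties of $X_{2,m}$ itself, i.e., of the quotient of the ambient polynomial ring $R$ by the prime ideal $I(X_{2,m})$, not of $R/J$ for your ideal $J$ generated by (1)--(3); to apply Serre's criterion ($R_0$ plus $S_1$ implies reduced) you would need $S_1$ for $R/J$, and establishing that is essentially the same difficulty as the conjecture. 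The Gr\"obner-degeneration route (a squarefree initial ideal for $J$) is coherent but is precisely the hard step you have not carried out. So: as a proof of set-theoretic equality your proposal is sound and is genuinely new relative to the paper; as a proof of the conjecture as the authors test it, it is incomplete, for exactly the reason you state.
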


\subsection{Orbit structure}

Now we consider the action of ${\rm GL}_m(A_N)$ on $X_{k,m}^{(N)}$ and determine the orbit structure. The group ${\rm GL}_m(A_N)$
embeds into the group ${\rm GL}(W\otimes A_N)$ as the subgroup of automorphisms commuting with ${\rm id}_W\otimes t$; every such automorphism
can be written uniquely as
$$\sum_{i=0}^{N-1}\psi_it^i$$ for $\psi_0\in{\rm GL}(W)$, $\psi_i\in{\rm End}(W)$ for $i=1,\ldots,N-1$,
where each summand acts on $W\otimes A_N$ by applying the endomorphism $\psi_i$ in the $W$-component and multiplying by $t^i$ in the $A_N$-component.
In particular, such an element acts on a linear map $\sum_if_it^i$ from $V$ to $W\otimes A_N$ by
$$(\sum_i\psi_it^i)(\sum_if_it^i)=\sum_i(\sum_{i'+i''=i}\psi_{i'}f_{i''})t^i.$$
This allows us to conclude:

\begin{lem}\label{orb}
Under the isomorphism $$X^{(N)}_{\dim V,\dim W}\simeq(\mathcal{N}^{(N)}(V)\times{\rm Hom}(V,W))^{\rm st}/{\rm GL}(V),$$
the action of ${\rm GL}_{\dim W}(A_N)$ on $X_{\dim V,\dim W}^{(N)}$ translates to
$$(\sum_i\psi_it^i)\cdot\overline{(\varphi,f)}=\overline{(\varphi,\sum_i\psi_if\varphi^i)}$$
for $\sum_i\psi_it^i\in{\rm GL}_{\dim W}(A_N)$, $\varphi\in\mathcal{N}^{(N)}(V)$ and $f\in{\rm Hom}(V,W)$.
\end{lem}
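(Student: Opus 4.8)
The plan is to trace the action through the explicit parametrization supplied by Corollary \ref{maincor}. Recall that under the isomorphism $X^{(N)}_{\dim V,\dim W}\simeq(\mathcal{N}^{(N)}(V)\times{\rm Hom}(V,W))^{\rm st}/{\rm GL}(V)$ the class $\overline{(\varphi,f)}$ corresponds to the subspace ${\rm Im}(\Phi_{\varphi,f})\subset W\T A_N$, where $\Phi_{\varphi,f}=\sum_{b=0}^{N-1}f\varphi^{N-1-b}t^b$ is the map $V\to W\T A_N$, injective by stability of $(\varphi,f)$. On the other hand, the action of $g=\sum_i\psi_it^i\in{\rm GL}_{\dim W}(A_N)\simeq{\rm GL}(W\T A_N)$ on $X^{(N)}_{\dim V,\dim W}$ is simply $U\mapsto gU$. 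So I would compute $g\cdot{\rm Im}(\Phi_{\varphi,f})={\rm Im}(g\circ\Phi_{\varphi,f})$ and identify the result as the image of the parametrizing map of another stable pair.

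The computation itself is short. Writing $\Phi_{\varphi,f}=\sum_c (f\varphi^{N-1-c})t^c$ and applying $g$ by means of the multiplication rule $(\sum_i\psi_it^i)(\sum_c h_ct^c)=\sum_b(\sum_{i+c=b}\psi_i h_c)t^b$ displayed just before the lemma, the $t^b$-component of $g\circ\Phi_{\varphi,f}$ is $\sum_{i=0}^b\psi_i f\varphi^{N-1-(b-i)}=\sum_{i=0}^b\psi_i f\varphi^{N-1-b+i}$, so that
\[
g\circ\Phi_{\varphi,f}=\sum_{b=0}^{N-1}\Big(\sum_{i=0}^{b}\psi_i f\varphi^{N-1-b+i}\Big)t^b.
\]
Setting $f'=\sum_i\psi_i f\varphi^i$, the parametrizing map of $(\varphi,f')$ is $\Phi_{\varphi,f'}=\sum_b (f'\varphi^{N-1-b})t^b$, whose $t^b$-component equals $\sum_i\psi_i f\varphi^{N-1-b+i}$. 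Here the summands with $i>b$ carry the power $\varphi^{N-1-b+i}$ with exponent $\ge N$; these vanish because $\varphi\in\mathcal{N}^{(N)}(V)$, i.e.~$\varphi^N=0$. Thus the two $t^b$-components agree term by term and $g\circ\Phi_{\varphi,f}=\Phi_{\varphi,f'}$ on the nose.

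It then remains to observe that $(\varphi,f')$ is again a stable pair: since $g$ is invertible and $\Phi_{\varphi,f}$ is injective, the equality $\Phi_{\varphi,f'}=g\circ\Phi_{\varphi,f}$ shows $\Phi_{\varphi,f'}$ is injective, which is precisely stability of $(\varphi,f')$. Consequently $gU={\rm Im}(g\circ\Phi_{\varphi,f})={\rm Im}(\Phi_{\varphi,f'})$, which under the isomorphism is the point $\overline{(\varphi,f')}=\overline{(\varphi,\sum_i\psi_i f\varphi^i)}$, giving the asserted formula. The only thing that genuinely needs care is the bookkeeping of the truncation at $t^N$: this is exactly where the $N$-nilpotency $\varphi^N=0$ must be invoked to discard the extra high-order terms, after which the matching is term-by-term, so I expect no real obstacle.
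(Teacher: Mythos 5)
Your proof is correct and follows essentially the same route as the paper: apply $g=\sum_i\psi_it^i$ to the parametrizing map $\sum_i f\varphi^{N-1-i}t^i$ via the multiplication rule and identify the result as the parametrizing map of $(\varphi,\sum_i\psi_if\varphi^i)$. The paper's proof is a one-line version of this (it only compares the $t^{N-1}$-coefficients, since the earlier technical lemma shows the pair is recovered from the top coefficient), whereas you check all coefficients agree using $\varphi^N=0$ and verify stability explicitly; this is just a more detailed writing of the same argument.
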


\begin{proof} Compute the action of $\sum_i\psi_it^i$ on $\sum_if\varphi^{N-1-i}t^i$ as above and compare the $t^{N-1}$-coefficients.\end{proof}

To parametrize the orbits of ${\rm GL}_m(A_N)$ in $X_{k,m}^{(N)}$, we use representation theory of the algebra $A_N$
and some of the methods developed in \cite{CFR1}. More precisely, we will use the following facts:

\begin{itemize}
\item The indecomposable representations of $A_N$ are (up to isomorphism) the $U_i={\bC}[t]/(t^i)$ for $i=1,\ldots,N$.
The representation $U_i$ has dimension $i$ and socle $U_1$. We have $\dim{\rm Hom}_{A_N}(U_i,U_j)=\min(i,j)$. In particular,
$A_N$ admits only finitely many isomorphism classes of representations of fixed dimension.
\item Two subrepresentations $R,R'$ of an injective representation $I$ of a finite-dimensional algebra
$A$ are conjugate under ${\rm Aut}(I)$ if and only if they are isomorphic.
\item A representation $R$ embeds into an injective representation $I$ of a finite-dimensional algebra $A$
if and only if the socles embed, that is, ${\rm soc}(R)$ embeds into ${\rm soc}(I)$.
\item If an $A$-representation $M$ admits only finitely many isomorphism classes of subrepresentations of a given
dimension $k$, the Grassmannian of subrepresentations ${\rm Gr}_k^A(M)$ is stratified into locally closed subsets
$\mathcal{S}_{[R]}$ consisting of subrepresentations in a fixed isomorphism class $[R]$; we have
$\dim\mathcal{S}_{[R]}=\dim{\rm Hom}(R,M)-\dim{\rm End}(R)$.
\end{itemize}

Combining the above statements, we have:

\begin{prop} Suppose $A$ is an algebra of finite representation type, and let $I$ be an injective representation of $A$.
Then the ${\rm Aut}(I)$-orbits $\mathcal{O}_{[R]}$ in ${\rm Gr}_k(I)$ are parametrized by the isomorphism classes $[R]$
such that $R$ is $k$-dimensional and ${\rm soc}(R)\subset{\rm soc}(I)$. The orbit $\mathcal{O}_{[R]}$ has dimension
$\dim{\rm Hom}(R,I)-\dim{\rm End}(R)$.
\end{prop}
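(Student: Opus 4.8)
The plan is to assemble the proposition directly from the three structural facts recalled just above, the only genuine preliminary being to verify that the finiteness hypothesis needed for the stratification result is met. First I would observe that since $A$ has finite representation type, the Krull--Schmidt theorem forces any $k$-dimensional $A$-module to be a direct sum of indecomposables each of dimension at most $k$; as there are only finitely many indecomposables, there are only finitely many isomorphism classes of $A$-modules of dimension $k$. In particular $I$ admits only finitely many isomorphism classes of $k$-dimensional subrepresentations, so the finiteness hypothesis of the stratification fact applies to $M = I$.

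Applying that stratification fact, the quiver Grassmannian $\mathrm{Gr}_k^A(I)$ then decomposes into locally closed strata $\mathcal{S}_{[R]}$, one for each isomorphism class $[R]$ of $k$-dimensional submodules of $I$, with $\dim \mathcal{S}_{[R]} = \dim \mathrm{Hom}(R,I) - \dim \mathrm{End}(R)$. Next I would invoke the conjugacy criterion: two subrepresentations of the injective module $I$ lie in a single $\mathrm{Aut}(I)$-orbit precisely when they are isomorphic. This identifies each nonempty stratum $\mathcal{S}_{[R]}$ with exactly one $\mathrm{Aut}(I)$-orbit $\mathcal{O}_{[R]}$, so the orbits are in bijection with the occurring isomorphism classes and the dimension formula transfers verbatim.

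Finally, to pin down which classes $[R]$ actually occur, I would use the socle-embedding criterion: a $k$-dimensional $R$ embeds into the injective $I$ if and only if $\mathrm{soc}(R)$ embeds into $\mathrm{soc}(I)$. Hence the orbits $\mathcal{O}_{[R]}$ are parametrized exactly by the isomorphism classes of $k$-dimensional modules $R$ with $\mathrm{soc}(R) \hookrightarrow \mathrm{soc}(I)$, together with the stated dimension, which is the assertion.

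Since each of these three criteria is available to us, the argument is essentially a bookkeeping assembly and I do not anticipate a real obstruction. If pressed to name the one step carrying genuine content, it is the dimension count inside the stratification fact: there the value $\dim \mathrm{Hom}(R,I) - \dim \mathrm{End}(R)$ arises because the submodules isomorphic to $R$ form the image of the open locus of injective homomorphisms $R \to I$ in $\mathrm{Hom}(R,I)$, with two such homomorphisms sharing an image iff they differ by precomposition with an element of $\mathrm{Aut}(R)$, which is open in $\mathrm{End}(R)$ and acts freely. What guarantees that the stratum is a single orbit, rather than a union of several, is precisely the conjugacy criterion, so the homogeneity needed for this dimension computation and the orbit description dovetail cleanly.
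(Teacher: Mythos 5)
Your proposal is correct and follows essentially the same route as the paper, which proves this proposition simply by combining the same recalled facts (the stratification of ${\rm Gr}_k^A(M)$ with its dimension formula, the conjugacy criterion for subrepresentations of an injective module, and the socle-embedding criterion); your additional verification of the finiteness hypothesis via Krull--Schmidt and your explanation of where the dimension formula comes from are welcome details that the paper leaves implicit.
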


This proposition applies to our setting since $A=A_N$ is of finite representation type and $I=U_N^m$ is injective. Every $U_i$ has simple socle,
thus a representation embeds into $U_N^m$ if and only if it has at most $m$ indecomposable direct summands. A $k$-dimensional such
representation can be written as $U_\lambda=U_{\lambda_1}\oplus\ldots\oplus U_{\lambda_m}$ for a partition
$N\geq\lambda_1\geq\ldots\geq\lambda_m\geq 0$ with  $\sum_i\lambda_i=k$ (note the parts are allowed to be zero).\\

\begin{cor} The ${\rm GL}_m(A_N)$-orbits $\mathcal{O}_\lambda$ in $X_{k,m}^{(N)}$ are parametrized by partitions
$\lambda$ of $k$ of length $m$ with parts at most $N$. We have $$\dim\mathcal{O}_\lambda=mk-\sum_i(\lambda_i')^2,$$
where $\lambda'$ denotes the conjugate partition of $\lambda$.
\end{cor}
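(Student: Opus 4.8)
The plan is to combine the preceding Proposition with the two elementary facts already established: that the $\mathrm{GL}_m(A_N)$-orbits in $X_{k,m}^{(N)}$ correspond to isomorphism classes of $k$-dimensional $A_N$-subrepresentations of the injective module $U_N^m$, and that such subrepresentations are exactly the modules $U_\lambda=U_{\lambda_1}\oplus\dots\oplus U_{\lambda_m}$ for partitions $N\geq\lambda_1\geq\dots\geq\lambda_m\geq 0$ with $\sum_i\lambda_i=k$. The parametrization of the orbits by such partitions is therefore immediate from the Proposition; the only real work is to evaluate the dimension formula $\dim\mathcal{O}_\lambda=\dim\mathrm{Hom}(U_\lambda,U_N^m)-\dim\mathrm{End}(U_\lambda)$ and rewrite it in terms of the conjugate partition.

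First I would compute $\dim\mathrm{Hom}(U_\lambda,U_N^m)$. Using the additivity of $\mathrm{Hom}$ over direct sums together with the formula $\dim\mathrm{Hom}_{A_N}(U_i,U_j)=\min(i,j)$ from the bulleted facts, we get
\begin{equation}
\dim\mathrm{Hom}(U_\lambda,U_N^m)=\sum_{i=1}^m m\cdot\min(\lambda_i,N)=m\sum_{i=1}^m\lambda_i=mk,
\end{equation}
since each $\lambda_i\leq N$. This is the clean term; the second term is where the conjugate partition enters.

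Next I would compute $\dim\mathrm{End}(U_\lambda)$ by the same additivity, obtaining
\begin{equation}
\dim\mathrm{End}(U_\lambda)=\sum_{i,j=1}^m\min(\lambda_i,\lambda_j).
\end{equation}
The key combinatorial identity to invoke here is $\sum_{i,j}\min(\lambda_i,\lambda_j)=\sum_k(\lambda_k')^2$, where $\lambda'$ is the conjugate partition. This follows by writing $\min(\lambda_i,\lambda_j)=\#\{k:\ \lambda_i\geq k\text{ and }\lambda_j\geq k\}$ and summing over $i,j$: for each row-height $k$ the inner count is $(\#\{i:\lambda_i\geq k\})^2=(\lambda_k')^2$. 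Substituting both computations into the Proposition's dimension formula yields $\dim\mathcal{O}_\lambda=mk-\sum_i(\lambda_i')^2$, as claimed.

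I do not anticipate a genuine obstacle here, as the statement is essentially a bookkeeping corollary of the Proposition. The one point requiring minor care is the combinatorial identity for $\sum_{i,j}\min(\lambda_i,\lambda_j)$; one should make sure it is applied with the convention that $\lambda$ has exactly $m$ parts, some possibly zero (these contribute nothing to either sum and do not affect $\lambda'$), so that the formula is stated consistently with the parametrization by length-$m$ partitions.
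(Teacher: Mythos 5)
Your proposal is correct and follows essentially the same route as the paper: both apply the preceding Proposition's formula $\dim\mathcal{O}_\lambda=\dim\mathrm{Hom}(U_\lambda,U_N^m)-\dim\mathrm{End}(U_\lambda)$, compute $\dim\mathrm{Hom}(U_\lambda,U_N^m)=mk$, and reduce the endomorphism term to a sum of minima. The only (harmless) difference is at the last step: you prove the identity $\sum_{i,j}\min(\lambda_i,\lambda_j)=\sum_k(\lambda_k')^2$ directly by double counting, whereas the paper rewrites the dimension as $(m+1)k-2\sum_i i\lambda_i$ and cites Macdonald's identity $\sum_i(i-1)\lambda_i=\sum_i\binom{\lambda_i'}{2}$ to conclude.
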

\begin{proof}
We know that the dimension of $\dim\mathcal{O}_\lambda$ is equal to 
$$\dim{\rm Hom}(U_\lambda,U_N^m)-\dim{\rm End}(U_\lambda).$$
We note that $\dim{\rm Hom}(U_{\lambda_i},U_N)=\lambda_i$ and 
$\dim{\rm Hom}(U_{\lambda_i},U_{\lambda_j})=\lambda_{{\rm max}(i,j)}$. Hence
$\dim\mathcal{O}_\lambda=(m+1)k-2\sum_{i=1}^m i\lambda_i$. Now using \cite[I.(1,6)]{Mac}, we arrive at the desired formula.
\end{proof}

Let us determine the closure relation of these orbits. We recall some facts on orderings on partitions from e.g. \cite{Mac}:

\begin{itemize}
\item We write $\lambda\geq\mu$ (the so-called dominance ordering on partitions) if $\sum_{j\leq i}\lambda_j\geq\sum_{j\leq i}\mu_j$ for all $i$.
\item We have $\lambda>\mu$ minimally if and only if there exist entries $i<j$ such that $\mu_i=\lambda_i-1$, $\mu_j=\lambda_j+1$ and
$\mu_k=\lambda_k$ for all $k\not=i,j$.
\item We have $\lambda\geq\mu$ if and only if for the conjugate partitions one has $\mu'\geq\lambda'$.
\end{itemize}

Consider partitions $\lambda$, $\mu$ as in the corollary.

\begin{thm}\label{adherence} The closure of $\mathcal{O}_\lambda$ contains $\mathcal{O}_\mu$ if and only if $\lambda\geq\mu$.
\end{thm}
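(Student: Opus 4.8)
The plan is to prove the orbit closure relation $\overline{\mathcal{O}_\lambda}\supset\mathcal{O}_\mu \iff \lambda\geq\mu$ by combining the representation-theoretic description of the orbits with a degeneration/extension argument. Recall from the preceding Proposition that $\mathcal{O}_\lambda$ consists of subrepresentations of $I=U_N^m$ isomorphic to $U_\lambda=\bigoplus_i U_{\lambda_i}$, where $\lambda$ is a partition of $k$ with $m$ parts, each at most $N$. Thus the problem translates entirely into understanding which isomorphism types of subrepresentations specialize to which others inside the Grassmannian ${\rm Gr}_k^{A_N}(I)$. The governing principle is that $\mathcal{O}_\mu$ lies in $\overline{\mathcal{O}_\lambda}$ precisely when $U_\mu$ is a degeneration of $U_\lambda$ (in the sense of the closure order on representation varieties of the algebra $A_N$), and I will show this degeneration order coincides with the dominance order on the associated partitions.

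First I would handle the direction $\lambda\geq\mu \Rightarrow \overline{\mathcal{O}_\lambda}\supset\mathcal{O}_\mu$. By the minimal-covering description of dominance recalled before the theorem, it suffices to treat a single minimal step $\lambda>\mu$, where $\mu_i=\lambda_i-1$, $\mu_j=\lambda_j+1$ (with $i<j$) and all other parts agree; the general case follows by transitivity of closure containment. For such a step one exhibits an explicit one-parameter family of subrepresentations of $I$ whose generic member is isomorphic to $U_\lambda$ and whose special member is isomorphic to $U_\mu$. Concretely, in the two relevant summands one writes down a flat family of embeddings $U_{\lambda_i}\oplus U_{\lambda_j}\hookrightarrow U_N\oplus U_N$ depending on a parameter $s$, arranged so that for $s\neq 0$ the image is a copy of $U_{\lambda_i}\oplus U_{\lambda_j}$ but at $s=0$ the image degenerates to a copy of $U_{\lambda_i-1}\oplus U_{\lambda_j+1}=U_{\mu_i}\oplus U_{\mu_j}$; this is the standard Hall/module-degeneration picture for truncated polynomial algebras. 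Since all the maps stay genuine $A_N$-subrepresentations of fixed dimension $k$, this gives a curve in ${\rm Gr}_k^{A_N}(I)$ landing in $\mathcal{O}_\mu$ with generic point in $\mathcal{O}_\lambda$, proving the containment.

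For the converse, $\overline{\mathcal{O}_\lambda}\supset\mathcal{O}_\mu \Rightarrow \lambda\geq\mu$, I would argue by a semicontinuity invariant. For a subrepresentation $R\subset I$ and each $d\geq 1$, the function $R\mapsto\dim{\rm Hom}_{A_N}(U_d,R)$ is upper semicontinuous on the Grassmannian (it is the rank of a morphism of vector bundles over ${\rm Gr}_k^{A_N}(I)$, so its jumps happen on closed subsets). On the orbit $\mathcal{O}_\lambda$ the value of $\dim{\rm Hom}_{A_N}(U_d,U_\lambda)$ is a fixed number expressible through $\lambda$, and a short computation with $\dim{\rm Hom}(U_d,U_{\lambda_i})=\min(d,\lambda_i)$ shows that these numbers, as $d$ ranges, are equivalent data to the partial sums of the conjugate partition $\lambda'$. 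Upper semicontinuity then forces, for every point of $\overline{\mathcal{O}_\lambda}$ and in particular for $\mathcal{O}_\mu$, the inequality $\dim{\rm Hom}(U_d,U_\mu)\geq\dim{\rm Hom}(U_d,U_\lambda)$ for all $d$, which translates into $\mu'\leq\lambda'$ in dominance, equivalently $\lambda\geq\mu$ by the last bulleted fact on conjugate partitions.

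The main obstacle I expect is organizing the first (constructive) direction cleanly: one must produce the degenerating family inside ${\rm Gr}_k^{A_N}(I)$ as a whole while controlling the socle-embedding constraint ${\rm soc}(R)\subset{\rm soc}(I)$ throughout the family, so that every member is an honest point of our Grassmannian and the limit genuinely lands in $\mathcal{O}_\mu$ rather than in some smaller-dimensional degenerate type. Equivalently, one needs the minimal degeneration $U_\lambda\rightsquigarrow U_\mu$ for $A_N$-modules to be realizable by subrepresentations of the fixed injective $I$, which is where the bound $\lambda_1\leq N$ (parts at most $N$) and the finite representation type of $A_N$ are used. Once the single minimal step is set up correctly, the rest---transitivity for the forward direction and the semicontinuity bookkeeping for the reverse---is routine.
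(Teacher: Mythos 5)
Your proposal is correct and follows essentially the same route as the paper: the forward direction reduces to a minimal dominance step in two summands realized by an explicit one-parameter family of $t$-invariant subspaces (the paper's family $U_z$ generated by $(zt^{N-\lambda_1},t^{N-\lambda_1+1})$ and $(t^{N-\lambda_2-1},zt^{N-\lambda_2})$, which is exactly the curve your sketch calls for), and the converse goes by semicontinuity of an isomorphism-type invariant. Your invariant $\dim\mathrm{Hom}_{A_N}(U_d,U)$ is simply the complement of the paper's, since $\dim\mathrm{Hom}_{A_N}(U_d,U)=\dim U-\dim t^dU$, so the closed loci where it jumps are precisely the paper's sets $C(k_*)$, and both versions translate into dominance of the conjugate partitions in the same way.
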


\begin{proof} Suppose $\lambda\geq\mu$. To prove that $\mathcal{O}_\mu$ is contained in the closure of $\mathcal{O}_\lambda$,
it suffices to do this in the case where $\lambda\geq\mu$ minimally, thus $\mu$ differs from $\lambda$ only in two positions $i<j$ as above.
We can then reduce to the case $m=2$, thus we want to prove that the closure of the orbit corresponding to the partition $(\lambda_1,\lambda_2)$
contains the orbit corresponding to $(\lambda_1-1,\lambda_2+1)$ (in particular, $\lambda_1\ge\lambda_2+2$). 
We consider the following family $U_z$ for $z\in{\bC}$ of
$t$-invariant subspaces of $A_N^2$: the subspace $U_z$ is generated by
$(zt^{N-\lambda_1},t^{N-\lambda_1+1})$ and $(t^{N-\lambda_2-1},zt^{N-\lambda_2})$.
Then $U_z$ belongs to $\mathcal{O}_{(\lambda_1,\lambda_2)}$ for $z\not=0,1,-1$ and to $\mathcal{O}_{(\lambda_1-1,\lambda_2+1)}$ for $z=0$.
(The $z=0$ case is clear; if $z\ne 0$, then $U_z\simeq U_{\lambda_1}\oplus U_{\lambda_2}$ if and only if 
the vectors $t^{\lambda_1-\lambda_2-1}(zt^{N-\lambda_1},t^{N-\lambda_1+1})$ and $(t^{N-\lambda_2-1},zt^{N-\lambda_2})$ are
linearly independent, which is equivalent to $z\ne \pm 1$).\\
To prove the converse, we define for a sequence $k_*=(k_1,\ldots,k_N)$ the subset $$C(k_*)=\{U\in X_{k,m}^{(N)}\, :\, \dim t^iU\leq k_{i+1}\mbox{ for } i=0,\ldots,N-1\}.$$
This is a closed subset since the dimension inequalities can be interpreted as rank conditions. For $U\in\mathcal{O}_\lambda$, we have
$$\dim t^iU=\sum_{j:\lambda_j\geq i}(\lambda_j-i).$$
Thus, defining $k_*(\lambda)$ by $$k_i(\lambda)=\sum_{j:\lambda_j\geq i-1}(\lambda_j-i+1)=\sum_{j\geq i}\lambda_j',$$
we have $\mathcal{O}_\lambda\subset C(k_*(\lambda))$.  This immediately implies $\overline{\mathcal{O}_\lambda}\subset C(k_*(\lambda))$.
Since we already know that $\overline{\mathcal{O}_\lambda}$ contains all $\mathcal{O}_\mu$ for $\lambda\geq\mu$,
the claim follows once we prove that $C(k_*(\lambda))$ is contained in the union of the $\mathcal{O}_\mu$ for $\lambda\geq\mu$.
So let us assume that $\mathcal{O}_\mu$ is contained in $C(k_*(\lambda))$. Then we know, by the definitions, that
$$\sum_{j:\mu_j\geq i}(\mu_j-i)\leq\sum_{j:\lambda_j\geq i}(\lambda_j-i)$$
for all $i=0,\ldots,N-1$; % After some juggling with the definition of the conjugate partition,
this can be rewritten as $\mu'\geq\lambda'$, thus $\lambda\geq\mu$. The theorem is proved.\end{proof}

In particular, we can determine the open orbit of ${\rm GL}_m(A_N)$ in $X_{k,m}^{(N)}$; it corresponds
to the partition $(\underbrace{N,\ldots,N}_{s},r,0,\ldots,0)$,  where $k=sN+r$ for $0\leq r<N$.

\subsection{Desingularization}\label{resolution}

For $N,k,m$ as before and a sequence $(k_1,\ldots,k_N)$ as in the proof of Theorem \ref{adherence} (in particular, $k_1=k$),
we consider the variety $Y(k_*)$ of tuples $$(U_1,\ldots,U_{N})\in\prod_{i=0}^{N-1}{\rm Gr}_{k_{i+1}}(W\otimes t^iA_N)$$
such that
\begin{enumerate}
\item $U_1\supset U_2\supset\ldots\supset U_{N}$,
\item $t(U_i)\subset U_{i+1}$ for $i=1,\ldots,N-1$
\end{enumerate}

(as before, we abbreviate the map ${\rm id}_W\otimes t$ simply by $t$).
This is a closed subvariety of the product $\prod_{i=0}^{N-1}{\rm Gr}_{k_{i+1}}(W\otimes t^iA_N)$ and thus projective.

\begin{rem} We can view the variety $Y(k_*)$ as a quiver Grassmannian as follows: consider
the quiver $\Gamma_N$ with $N$ vertices $v_1,\ldots,v_{N}$ and $2(N-1)$ arrows
$\alpha_i:v_i\rightarrow v_{i+1}$ for $i=1,\ldots,N-1$ and $\beta_i:v_{i+1}\rightarrow v_i$ for $i=1,\ldots,N-1$. We consider the admissible ideal $I$
in ${\bC}\Gamma_N$ generated by the elements
$$\alpha_i\beta_i-\beta_{i+1}\alpha_{i+1},\, i=1,\ldots,N-1$$
(where for $i=N-1$, the relation has to be read as $\alpha_{N-1}\beta_{N-1}=0$, i.e. we formally define $\alpha_{N}$ and $\beta_{N}$ as zero).
The algebra ${\bC}\Gamma_N/I$ is then isomorphic to the Auslander algebra of $A_N$.

We construct a specific representation ${\bf W}$ of ${\bC}\Gamma_N/I$: we define ${\bf W}_{v_i}=W\otimes t^iA_N$, ${\bf W}_{\alpha_i}=t$
(multiplication by $t$ in the second component) and ${\bf W}_{\beta_i}=\iota$, the inclusion map. We see that this
representation of $\Gamma_N$ indeed satisfies the relations in $I$. Thus we can interpret $Y(k_*)$ as ${\rm Gr}_{k_*}^{{\bC}\Gamma_N/I}({\bf W})$.
\end{rem}

\begin{prop} For every partition $\lambda$ as above, projection to ${\rm Gr}_{k}(W\otimes A_N)$ induces a desingularization map
$$\pi_\lambda:Y(k_*(\lambda))\rightarrow\overline{\mathcal{O}_\lambda}.$$
\end{prop}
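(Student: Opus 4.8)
The plan is to verify the three defining properties of a desingularization: that $Y(k_*(\lambda))$ is smooth and irreducible (and projective, as already noted), that $\pi_\lambda$ is surjective onto $\overline{\mathcal{O}_\lambda}$, and that $\pi_\lambda$ is an isomorphism over a dense open subset of $\overline{\mathcal{O}_\lambda}$. Since $Y(k_*(\lambda))$ is projective, $\pi_\lambda$ is automatically proper, and in particular its image is closed.

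First I would establish smoothness by exhibiting $Y(k_*(\lambda))$ as an iterated tower of Grassmann bundles, using the auxiliary projection $q:(U_1,\dots,U_N)\mapsto(U_2,\dots,U_N)$ (as opposed to $\pi_\lambda$, which remembers only $U_1$). The target of $q$ is the variety of the same type for the shifted sequence $(k_2,\dots,k_N)$ inside $W\otimes tA_N\simeq W\otimes A_{N-1}$, hence smooth and irreducible by induction on $N$. The fibre of $q$ over $(U_2,\dots,U_N)$ consists of the $U_1$ with $U_2\subseteq U_1\subseteq t^{-1}(U_2)$ and $\dim U_1=k_1$; since $U_2\subseteq{\rm Im}\,t=W\otimes tA_N$ and $\ker t=W\otimes t^{N-1}A_N$ has dimension $m$, one computes $\dim t^{-1}(U_2)=m+k_2$ independently of the base point, so $t^{-1}(U_2)/U_2$ is a rank-$m$ vector bundle over the base and the fibres of $q$ are the Grassmannians ${\rm Gr}_{k_1-k_2}(\bC^m)$. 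Thus $q$ is a Grassmann bundle and $Y(k_*(\lambda))$ is smooth and irreducible.

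Next I would pin down the image. Each $U_i$ in a tower is $t$-invariant because $tU_i\subseteq U_{i+1}\subseteq U_i$; in particular $U_1\in X^{(N)}_{k,m}$, so $\pi_\lambda$ really lands in ${\rm Gr}_k(W\otimes A_N)$. Iterating $tU_i\subseteq U_{i+1}$ gives $t^iU_1\subseteq U_{i+1}$, whence $\dim t^iU_1\leq k_{i+1}(\lambda)$ for all $i$; by the description of $C(k_*(\lambda))$ in the proof of Theorem \ref{adherence} this says exactly $U_1\in C(k_*(\lambda))=\overline{\mathcal{O}_\lambda}$, so ${\rm Im}\,\pi_\lambda\subseteq\overline{\mathcal{O}_\lambda}$. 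Conversely, $\pi_\lambda$ is proper, so its image is closed, and (by the final step) it contains $\mathcal{O}_\lambda$, hence all of $\overline{\mathcal{O}_\lambda}$; therefore ${\rm Im}\,\pi_\lambda=\overline{\mathcal{O}_\lambda}$.

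Finally, and this is the crux, I would show the tower is rigid over the open orbit. For $U\in\mathcal{O}_\lambda$, i.e. $U\simeq U_\lambda$ embedded as $\bigoplus_j t^{N-\lambda_j}A_N$, the proof of Theorem \ref{adherence} already records $\dim t^iU=\sum_{j:\lambda_j\geq i}(\lambda_j-i)=k_{i+1}(\lambda)$. Comparing this with the a priori inclusion $t^iU_1\subseteq U_{i+1}$ and the prescribed dimension $\dim U_{i+1}=k_{i+1}(\lambda)$, the equality of dimensions forces $U_{i+1}=t^iU$ whenever $U_1=U\in\mathcal{O}_\lambda$. Hence $\pi_\lambda$ is bijective over $\mathcal{O}_\lambda$ with inverse $U\mapsto(U,tU,\dots,t^{N-1}U)$, which is a morphism there because the ranks of $U\mapsto t^iU$ are locally constant on $\mathcal{O}_\lambda$; so $\pi_\lambda$ is an isomorphism over the dense open orbit $\mathcal{O}_\lambda$. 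The hard part is precisely this matching of the a priori lower bound $\dim t^iU_1$ with the prescribed dimension $k_{i+1}(\lambda)$ on $\mathcal{O}_\lambda$: it is the same combinatorial identity that underlies $C(k_*(\lambda))=\overline{\mathcal{O}_\lambda}$, and it is what simultaneously fixes the image and yields birationality. Combined with the smoothness and properness established above, this exhibits $\pi_\lambda$ as a desingularization of $\overline{\mathcal{O}_\lambda}$.
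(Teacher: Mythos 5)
Your proof is correct and follows essentially the same route as the paper's: containment of the image in $C(k_*(\lambda))=\overline{\mathcal{O}_\lambda}$ by iterating $t U_i\subset U_{i+1}$, surjectivity and generic injectivity via the dimension count $\dim t^iU=k_{i+1}(\lambda)$ on the open orbit combined with properness, and smoothness by exhibiting $Y(k_*(\lambda))$ as a tower of Grassmann bundles with fibres ${\rm Gr}_{k_i-k_{i+1}}(\bC^m)$. The only differences are cosmetic: you phrase the tower as an induction on $N$ (identifying the base with the analogous variety for $W\otimes A_{N-1}$) where the paper uses the truncated varieties $Y_i(k_*)$, and you spell out slightly more explicitly why the inverse $U\mapsto(U,tU,\dots,t^{N-1}U)$ is a morphism over $\mathcal{O}_\lambda$.
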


\begin{proof} We first verify that the image of $Y(k_*)$ under the projection
$$\pi:\prod_i{\rm Gr}_{k_{i+1}}(W\otimes t^iA_N)\rightarrow{\rm Gr}_{k}(W\otimes A_N)$$
given by $(U_1,\ldots,U_{N})\mapsto U_1$
is contained in $X_{k,m}^{(N)}$, and even in $C(k_*)$: indeed, the defining relations of $Y(k_*)$ imply that
$$t(U_1)\subset U_2\subset U_1,$$ which shows that $\pi$ projects to $X_{k,m}^{(N)}$. More generally, for all $i=0,\ldots,N-1$,
iterating the defining relations shows that
$$t^{i}(U_1)\subset U_{i+1},$$
which is thus a subspace of dimension at most $k_{i+1}$. But these are precisely the defining conditions of $C(k_*)$.\\
Applying this argument to the special case $k_*=k_*(\lambda)$ for a partition $\lambda$ as before, we see that the image
$\pi(Y(k_*(\lambda)))$ is contained in $C(k_*(\lambda))$, which by the proof of Theorem \ref{adherence} equals the closure of the
orbit $\mathcal{O}_\lambda$.\\
Again by the proof of Theorem \ref{adherence}, we already know that,
for a point $U\in\mathcal{O}_\lambda$, we have $\dim t^i(U)=k_{i+1}(\lambda)$ for $i=0,\ldots,N-1$. Thus, the fibre of $\pi$
over such a point $U$ consists of the single point $(U,tU,\ldots,t^{N-1}U)$ for dimension reasons.
First this proves that $\mathcal{O}_\lambda$ is contained in the image of the map
$\pi:Y(k_*(\lambda))\rightarrow\overline{\mathcal{O}_\lambda}$. But since $\pi$ is proper, thus has closed image,
even $\overline{\mathcal{O}_\lambda}$ is contained in the image, thus (by what is already proven)
$\pi$ maps onto $\overline{\mathcal{O}_\lambda}$. Second, the above argument proves that $\pi$ is generically one-to-one.\\
Finally, we prove that $Y(k_*)$ is always smooth, by realizing it as a tower of Grassmann bundles. To do this,
we consider truncated versions of the variety $Y(k_*)$, namely, for $i=1,\ldots,N$, we consider the subvariety
$Y_i(k_*)$ of $\prod_{j=i-1}^{N-1}{\rm Gr}_{k_{j+1}}(W\otimes t^jA_N)$ given by the same conditions as before, i.e.
the subvariety consisting of tuples $(U_i,\ldots,U_{N})$ such that $tU_j\subset U_{j+1}$ for all relevant $j$ and
$U_i\supset\ldots\supset U_N$. We have a sequence of projections
$$Y(k_*)=Y_1(k_*)\rightarrow Y_2(k_*)\rightarrow\ldots\rightarrow Y_N(k_*).$$
Obviously, we have $$Y_N(k_*)\simeq{\rm Gr}_{k_N}(\underbrace{W\otimes t^{N-1}A_N)}_{\simeq W}).$$
Now we consider the projection $Y_{N-1}(k_*)\rightarrow Y_N(k_*)$, which is equivariant for the action of ${\rm GL}(W\otimes t^{N-1}A_N)$,
the latter being transitive on the target. Its fibre over a point $U_N$ consists of all $U_{N-1}$ such that
$$U_{N}\subset U_{N-1}\subset t^{-1}U_{N}.$$
Note that $U_{N}\subset t^{-1}U_{N}$ since $tU_N=0\subset U_N$. Thus the fibre over $U_N$
is isomorphic to $${\rm Gr}_{k_{N-1}-k_N}(\underbrace{t^{-1}U_{N}/U_N}_{\simeq W}).$$
We continue inductively: the fibre of a projection $Y_{i-1}(k_*)\rightarrow Y_{i}(k_*)$ over a point $U_{i}$ consists of all $U_{i-1}$
such that $U_{i}\subset U_{i-1}\subset t^{-1}U_{1}$; we note that $tU_{i}\subset U_{i+1}\subset U_i$ and thus $U_i\subset t^{-1}U_{i}$.
Thus the fibre over $U_i$ is isomorphic to $${\rm Gr}_{k_{i-1}-k_i}(\underbrace{t^{-1}U_i/U_i}_{\simeq W}).$$
Thus we see that $Y(k_*)$ is a tower of Grassmann bundles with fibres isomorphic to ${\rm Gr}_{k_{i}-k_{i+1}}(W)$
for $i=1,\ldots,N$ (formally defining $k_{N+1}=0$), and in particular, $Y(k_*)$ is irreducible, smooth, of dimension $$mk_1-\sum_i(k_i-k_{i+1})^2.$$
In the special case $k_*=k_*(\lambda)$, this dimension can be written as $mk-\sum_i(\lambda_i')^2$ as expected.
\end{proof}

\subsection{All $X_{k,m}^{(N)}$ are affine Schubert varieties}
Let $P_i$, $i=0,\dots,m$ be the maximal parahoric subgroup of $\widehat{SL}_m$, corresponding to the simple root $\al_i$
of the affine algebra $\widehat{\msl}_m$.
We prove the following proposition.
\begin{prop}\label{Sch}
The variety $X_{k,m}^{(N)}$ is isomorphic to a Schubert variety in $\widehat{SL}_m/P_{k\!\!\!\mod\! m}$.
\end{prop}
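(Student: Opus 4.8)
The plan is to identify $X_{k,m}^{(N)}$ with an affine Schubert variety by exhibiting both as orbit closures for compatible group actions inside the affine Grassmannian of $\widehat{SL}_m$. Recall that the affine Grassmannian $\widehat{SL}_m/P_0$ can be realized (as in Section 1.3) as the space of lattices $U\in{\rm SGr}_0$ with $tU\subset U$ inside the Sato Grassmannian built on $W\T\bC[t,t^{-1}]$, and more generally $\widehat{SL}_m/P_{c}$ parametrizes lattices in ${\rm SGr}_c$. The key observation is that the defining condition of $X_{k,m}^{(N)}$ — namely $t$-invariance of a $k$-dimensional subspace of $W\T A_N=W\T\bC[t]/(t^N)$ — is \emph{precisely} the lattice condition after a suitable shift. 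First I would fix the charge: a $k$-dimensional $t$-invariant subspace $U\subset W\T t^0\bC[t]/(t^N)$ determines, by taking preimage and adding $W\T t^N\bC[t]$, a genuine lattice $\tilde U$ with $W\T t^N\bC[t]\subset\tilde U\subset W\T\bC[t]$ and $t\tilde U\subset\tilde U$, whose relative dimension is governed by $k$; reducing $k$ modulo $m$ records the connected component, i.e.\ which parahoric $P_{k\!\!\!\mod m}$ we land in.

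The second step is to match the group actions. From Lemma \ref{orb} and the surrounding discussion, the group ${\rm GL}_m(A_N)$ acts on $X_{k,m}^{(N)}$, and its unipotent radical $1+Mat_m(tA_N)$ is exactly the image of the lower-triangular part of the Iwahori $I\subset\widehat{SL}_m$ acting on the corresponding lattice space. The Iwahori orbits on the affine Grassmannian are the affine Schubert cells, indexed by the relevant coset of the affine Weyl group, and their closures are the affine Schubert varieties. I would check that the open ${\rm GL}_m(A_N)$-orbit $\mathcal{O}_\lambda$ on $X_{k,m}^{(N)}$ (with $\lambda=(N^s,r,0^{\cdots})$ the partition identified at the end of the orbit-structure section) corresponds under the lattice identification to a single $I$-orbit through a $T$-fixed point $p$, so that $\overline{\mathcal{O}_\lambda}=X_{k,m}^{(N)}$ maps onto the Schubert variety $\overline{Ip}$. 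The partition-indexed orbit decomposition of Theorem \ref{adherence}, with its dominance-order closure relation, should then match the Bruhat order on the corresponding affine Weyl group cosets; this combinatorial compatibility is the content that makes the set-theoretic bijection an isomorphism of varieties.

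Concretely, I would argue as follows. Embed $X_{k,m}^{(N)}$ into the affine Grassmannian via $U\mapsto\tilde U$ as above; this is a closed immersion because both sides are cut out by the same $t$-invariance and dimension conditions, and its image lies in the $I$-stable closed subset $\{\,\tilde U:\ W\T t^N\bC[t]\subset\tilde U\subset W\T\bC[t]\,\}$. Each point of $X_{k,m}^{(N)}$ maps to a lattice, the $T$-fixed points correspond exactly to the monomial (partition) lattices $U_\mu$, and one verifies that the $I$-orbit through $U_\mu$ coincides with the image of $\mathcal{O}_\mu$, both having dimension $\dim\mathcal{O}_\mu$ computed earlier. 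Since the whole variety is the closure of the single open orbit $\mathcal{O}_\lambda$, its image is $\overline{I\cdot U_\lambda}$, which is by definition the affine Schubert variety attached to the affine Weyl group element representing $\lambda$ in $\widehat{SL}_m/P_{k\!\!\!\mod m}$. Normality of $X_{k,m}^{(N)}$ (Corollary \ref{geocon}) together with normality of affine Schubert varieties upgrades the bijective proper map to an isomorphism.

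The main obstacle I anticipate is the bookkeeping that translates the representation-theoretic combinatorics (partitions $\lambda$ of $k$ into at most $m$ parts of size at most $N$, with dominance order) into the affine-Weyl-group combinatorics of Schubert cells in the correct parahoric quotient $\widehat{SL}_m/P_{k\!\!\!\mod m}$ — in particular pinning down exactly which minuscule/quasi-minuscule coweight coset the open orbit corresponds to, and confirming that the dominance order of Theorem \ref{adherence} is literally the induced Bruhat order rather than merely an order-preserving refinement. Verifying that the scheme-theoretic structures agree (so that the bijection is an isomorphism, not just a homeomorphism) is where one must invoke normality and the properness of the comparison map rather than attempt a direct coordinate computation.
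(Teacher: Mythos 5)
Your overall strategy is essentially the paper's: both realize $X_{k,m}^{(N)}$ inside the affine Grassmannian of $\widehat{SL}_m$ (your lattice map $U\mapsto\tilde U$ agrees, up to multiplication by a power of $t$ that normalizes the base point, with the paper's embedding $e_i\T t^j\mapsto w_i\T t^{j-N+a}$ of $\Lambda^k(\bC^m\T\bC[t]/t^N)$ into the level one module $L_{k\bmod m}$), and both then identify the image as the closure of an Iwahori orbit through a $T$-fixed point. However, as written your argument contains a genuinely false intermediate claim: the ${\rm GL}_m(A_N)$-orbits $\mathcal{O}_\mu$ are \emph{not} single $I$-orbits, and in particular the image of the open orbit $\mathcal{O}_\lambda$ is not a single $I$-orbit $I\cdot U_\lambda$. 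The simplest counterexample is $N=1$: there $X^{(1)}_{k,m}={\rm Gr}_k(\bC^m)$ is one ${\rm GL}_m(\bC)$-orbit, while $I$ acts on the corresponding squeezed lattices only through the Borel $B\subset SL_m(\bC)$, so this single orbit splits into $\binom{m}{k}$ classical Schubert cells. For the same reason the $T$-fixed points are not in bijection with partitions (many coordinate subspaces share the same isomorphism type $U_\mu$), so the dominance order of Theorem \ref{adherence} cannot literally be the Bruhat order on Schubert cells, and your step ``$\overline{\mathcal{O}_\lambda}$ maps onto $\overline{I\cdot U_\lambda}$, both orbits having dimension $\dim\mathcal{O}_\lambda$'' fails.

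The gap is repairable, and the repair makes your worries about order-matching unnecessary. Your map $U\mapsto\tilde U$ is a closed immersion: it is the standard identification of ${\rm Gr}_k(W\T A_N)$ with the locus of lattices $L$ satisfying $W\T t^N\bC[t]\subset L\subset W\T\bC[t]$, under which the $t$-stable subspaces (i.e.\ $X_{k,m}^{(N)}$) correspond exactly to the points of this locus inside the affine Grassmannian, since lattices are automatically $t$-stable. The image is therefore a closed, irreducible, finite-dimensional, $I$-stable subvariety of $\widehat{SL}_m/P_{k\bmod m}$; by the Iwahori--Bruhat decomposition it is a finite union of $I$-orbits, hence by irreducibility the closure of the unique dense one, i.e.\ a Schubert variety. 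No matching of the $\mathcal{O}_\mu$ with Schubert cells is needed, and no appeal to Zariski's main theorem either (if you do argue via a proper bijection, note that the relevant hypothesis is normality of the \emph{target} Schubert variety, as in the remark after Corollary \ref{aSv}, not normality of $X_{k,m}^{(N)}$ from Corollary \ref{geocon}). The paper's proof follows exactly this skeleton but in the projective picture: it picks the $T$-fixed point $x_{k,m}^{(N)}$ whose $I$-orbit (not ${\rm GL}_m(A_N)$-orbit) is dense in $X_{k,m}^{(N)}$ and builds the $I$-equivariant linear embedding so that this point goes to the highest weight line of $L_{k\bmod m}$, making the image manifestly the Schubert variety $\overline{Iy_{k,m}^{(N)}}$.
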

\begin{proof}
Recall the Iwahori group $I\subset \widehat{SL}_m$. Consider the $T$-fixed point $x_{k,m}^{(N)}\in X_{k,m}^{(N)}$ such that
$Ix_{k,m}^{(N)}$ is the open part of $X_{k,m}^{(N)}$.

Namely, fixing the standard basis $e_1,\dots,e_m$ of $\bC^m$ let us write
$k=Nr+s$, $0\le j<N$. Then $x_{k,m}^{(N)}$ is the subspace of $\bC^m\T\bC[t]/t^N$ spanned by the vectors $e_i\T t^j$,
$i=1,\dots,r-1$, $j=0,\dots,N-1$ and $e_r\T t^j$, $j=N-s,\dots,N-1$.

Our goal is to prove the existence of a $T$-fixed point $y_{k,m}^{(N)}\in \widehat{SL}_m/P_{k\!\!\!\mod\! m}$ such that
the closure of $Iy_{k,m}^{(N)}$ (the corresponding Schubert variety) is isomorphic to the closure of
$Ix_{k,m}^{(N)}$. To this end we construct an embedding of the $I$-module $\Lambda^k(\bC^m\T \bC[t]/t^N)$ into
the semi-infinite wedge space with the image contained in the level one representation $L_{k\!\!\!\mod\! m}$ corresponding to the
$\msl_m$ fundamental weight $\omega_{k\!\!\!\mod\! m}$.

Let us write $k=ma+b$, where $b=k\!\!\!\mod\! m$. Then $X_{k,m}^{(N)}$ contains the span of
\[
\bC^m \T t^j, j=t^{N-1},\dots,t^{N-a},\ e_1\T t^{N-a-1},\dots, e_b\T t^{N-a-1}.
\]
This point is obviously $I$-fixed. We want to identify it with the highest weight line in $L_b$.
This line inside $F^{(b)}$ is spanned by $\bC^m \T t^j$, $j\ge 0$ and $w_1\T t^{-1},\dots, w_b\T t^{-1}$.
Hence the map $e_i\T t^j\mapsto w_i\T t^{j-N+a}$ induces the $I$-modules embedding of $\Lambda^k(\bC^m\T \bC[t]/t^N)$
to $L_b$, sending $X_{k,m}^{(N)}$ to the Schubert variety $\overline{Iy_{k,m}^{(N)}}\subset \widehat{SL}_m/P_b$.
\end{proof}

\begin{cor}\label{aSv}
${\rm Gr}_N^a(\mgl_n)$ is an affine Schubert variety for $\widehat{SL}_{2n}$.
\end{cor}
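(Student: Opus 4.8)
The plan is to obtain the corollary as an immediate specialization of the two results already in place, namely Example \ref{XGr} and Proposition \ref{Sch}. First I would invoke Example \ref{XGr}, which supplies the isomorphism ${\rm Gr}_N^a(\mgl_n) \simeq X^{(N)}_{Nn,2n}$. This reduces the assertion to recognizing the quiver Grassmannian $X^{(N)}_{Nn,2n}$ as an affine Schubert variety for $\widehat{SL}_{2n}$, so no new geometric input is required beyond what precedes.

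Next I would apply Proposition \ref{Sch} in the special case $k = Nn$, $m = 2n$. That proposition already establishes that $X_{k,m}^{(N)}$ is isomorphic to a Schubert variety in $\widehat{SL}_m/P_{k\bmod m}$, with the isomorphism produced explicitly through the $I$-equivariant embedding of $\Lambda^k(\bC^m \T \bC[t]/t^N)$ into the semi-infinite wedge space landing inside the level one module $L_{k\bmod m}$. Specializing $m = 2n$ yields exactly the group $\widehat{SL}_{2n}$, so the two statements compose to give the claim: ${\rm Gr}_N^a(\mgl_n) \simeq X^{(N)}_{Nn,2n}$ is a Schubert variety in a partial affine flag variety of $\widehat{SL}_{2n}$.

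The only point demanding any care is the bookkeeping of the index $k\bmod m = Nn\bmod 2n$, which records the parahoric $P_{k\bmod m}$, i.e. which partial affine flag variety of $\widehat{SL}_{2n}$ the Schubert variety sits in. Since $Nn\bmod 2n$ equals $0$ when $N$ is even and $n$ when $N$ is odd, in both cases one lands in a well-defined maximal parahoric of $\widehat{SL}_{2n}$, and the conclusion follows. I do not anticipate a genuine obstacle here: both ingredients are already available, and the corollary is a direct composition of them rather than an argument requiring fresh ideas.
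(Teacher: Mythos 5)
Your proposal is correct and coincides with the paper's own (implicit) argument: the corollary is stated immediately after Proposition \ref{Sch} precisely because it follows by specializing that proposition to $k=Nn$, $m=2n$ and composing with the identification ${\rm Gr}_N^a(\mgl_n)\simeq X^{(N)}_{Nn,2n}$ of Example \ref{XGr}. Your bookkeeping of the parahoric index $Nn\bmod 2n$ (equal to $0$ or $n$ according to the parity of $N$) is also accurate.
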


\begin{rem}
We note that Proposition \ref{Sch} implies that $X_{k,m}^{(N)}$ is irreducible, normal, 
Cohen-Macaulay with rational singularities 
(see~\cite[Th\'eor\`emes~2.$\Sigma$,~3]{Mat}, \cite[Theorems~2.16,~2.23]{Kum}). 
In particular, this reproves the first part of Corollary \ref{geocon}.
\end{rem}

\section{Affine Lie algebras $\widehat{\msl}_2$ and $\widehat{\msp}_{2n}$}
\subsection{Type $A_1$}
In this subsection we restrict to the case $\g=\msl_2$. We prove Conjecture \ref{T} and give an explicit realization of the
degenerate affine Grassmannian inside the Sato Grassmannian $\rm{SGr}_0$.

Recall the identification $V\simeq W\T \bC[t,t^{-1}]$, $\dim W=2$. Let $\rm{pr}$ be the projection operator along $W\T 1$
to the span of the basis vectors $w_i\T t^j$, $j\ne 1$. We will also need a skew-symmetric form on $V$ defined by
\[
(w_1\T t^i,w_2\T t^j)=\delta_{i+j,-1}, \ (w_1\T t^i,w_1\T t^j)=(w_2\T t^i,w_2\T t^j)=0.
\]
\begin{thm}\label{sl2}
The degenerate affine Grassmannian $\Gr^a(\msl_2)$ sits inside $\rm{SGr}_0$ as the subvariety of subspaces
$U$ satisfying the following conditions
\begin{enumerate}
\item $\rm{pr} (tU)\subset U$,
\item $U$ is isotropic with respect to the above symplectic form.
\end{enumerate}
\end{thm}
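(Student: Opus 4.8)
The plan is to prove the theorem in two stages: first establish Conjecture \ref{T} for $n=2$, so that $\Gr^a(\msl_2)$ may be realized inside ${\rm SGr}_0$ via $\tilde L_0(\msl_2)\subset F^{(0)}$, and then identify this orbit closure with the subvariety cut out by (1) and (2). For the first stage I already have the surjection $L_0^a(\msl_2)\twoheadrightarrow\tilde L_0(\msl_2)$ of abelian-algebra modules from the lemma following Conjecture \ref{T}, so it remains to match characters. I would do this through a degenerate boson--fermion factorization: writing $\mgl_2=\msl_2\oplus\bC\cdot\mathrm{Id}$, the abelian algebra $p(\mgl_2\T t^{-1}\bC[t^{-1}])$ splits as $p(\msl_2\T t^{-1}\bC[t^{-1}])$ together with the degenerate Heisenberg part of Example \ref{h(z)}, and I would show that the multiplication map $\tilde L_0(\msl_2)\T\tilde{\mathcal F}\to L_0(\mgl_2^a)$, with $\tilde{\mathcal F}=\bC[h_{-1},h_{-2},\dots]/(h(z)^2)$, is an isomorphism. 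Dividing characters would then give the character of $\tilde L_0(\msl_2)$, to be compared with the PBW-graded character of the basic module $L_0^a(\msl_2)$; equality upgrades the surjection to an isomorphism.

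Once $\Gr^a(\msl_2)$ is embedded in ${\rm SGr}_0$, the heart of the argument is an observation about the symplectic form. The decomposition $V=V_{\le 0}\oplus V_{>0}$ with $V_{\le 0}=W\T\bC[t]$ and $V_{>0}=W\T t^{-1}\bC[t^{-1}]$ is a polarization: both summands are Lagrangian, and the base point $U_0=V_{\le 0}$ is Lagrangian. The key point is that a loop element $x\T t^{-i}$ preserves the form (is skew-adjoint) precisely when $\langle xa,b\rangle+\langle a,xb\rangle=0$ for all $a,b\in W$, i.e.\ when $x$ is traceless, so that $x\T t^{-i}\in\msp(V)$ if and only if $x\in\msp_2=\msl_2$. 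Given this, for $A=\sum_i p(x_i\T t^{-i})\colon V_{\le 0}\to V_{>0}$ (so that $\exp$ restricts to $1+A$) and $u,u'\in V_{\le 0}$, using that $V_{\le 0}$ and $V_{>0}$ are Lagrangian one checks $(Au,u')+(u,Au')=(Xu,u')+(u,Xu')$, where $X=\sum_i x_i\T t^{-i}$ acts as the loop element; hence $(1+A)U_0$ is isotropic if and only if every $x_i\in\msl_2$. In particular the orbit of $\exp(\msl_2^a\T t^{-1}\bC[t^{-1}])$ through $\bC|0\ket$ consists of isotropic subspaces, and since $\Gr^a(\msl_2)\subset\Gr^a(\mgl_2)$ these also satisfy $\mathrm{pr}(tU)\subset U$ by Proposition \ref{group}; both conditions being closed, they hold on all of $\Gr^a(\msl_2)$.

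For the reverse inclusion let $\mathcal Z\subset{\rm SGr}_0$ be the subvariety cut out by (1) and (2), so $\mathcal Z=\Gr^a(\mgl_2)\cap\{U\text{ isotropic}\}$ and $\Gr^a(\msl_2)\subseteq\mathcal Z$ by the previous paragraph. On the open cell of ${\rm SGr}_0$ transversal to $V_{>0}$, every point of $\Gr^a(\mgl_2)$ is of the form $(1+A)U_0$ with $A=\sum_i p(x_i\T t^{-i})$, $x_i\in\mgl_2$; by the computation above the intersection of $\mathcal Z$ with this cell is exactly the open $\msl_2$-orbit. Thus the orbit is open and nonempty in $\mathcal Z$, and it suffices to prove that $\mathcal Z$ is irreducible: then $\mathcal Z=\overline{\text{orbit}}=\Gr^a(\msl_2)$. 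I would prove irreducibility by a symplectic analogue of the resolution $R_{N,n}$ of Lemma \ref{irr}: for each finite approximation I consider towers $(U_0,\dots,U_N)$ satisfying the flag and $\mathrm{pr}\,t$-compatibility conditions of $R_{N,2}$ together with the isotropy of $U_0$, realizing $\mathcal Z$ (in each finite piece) as the image of a tower of fibrations whose fibres are now isotropic Grassmannians rather than $\Gr(2,4)$. This exhibits $\mathcal Z$ as irreducible and generically isomorphic to the resolution, completing the identification.

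The main obstacle is twofold. Establishing Conjecture \ref{T} in the first stage requires an \emph{exact} character identity, and while the degenerate boson--fermion factorization reduces it to the fully controlled $\mgl_2$ computation, verifying that the factorization map is an isomorphism rather than merely surjective is delicate. In the second stage, the genuinely geometric difficulty is the irreducibility of $\mathcal Z$: intersecting the irreducible $\Gr^a(\mgl_2)$ with the quadratic isotropy condition could a priori produce spurious boundary components, and ruling these out is precisely what the symplectic resolution must accomplish — one must check that the isotropic-Grassmannian fibres remain irreducible of the expected dimension and that the resolution surjects onto all of $\mathcal Z$, not merely onto $\Gr^a(\msl_2)$.
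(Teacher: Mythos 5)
Your overall architecture coincides with the paper's: first prove Conjecture \ref{T} for $n=2$ so that $\Gr^a(\msl_2)$ embeds into ${\rm SGr}_0$, then show the orbit lands in the locus cut out by (1) and (2), then recover that whole locus as the orbit closure via an open-dense argument. Your second and third stages are sound, and in places tidier than the paper's own treatment: the skew-adjointness computation (isotropy of $(1+A)U_0$ reduces, via the Lagrangian polarization, to $(Xu,u')+(u,Xu')=0$, which forces each $x_i\in\msp_2=\msl_2$) is a conceptual replacement for the explicit list of spanning vectors in the paper's Lemma \ref{lc}; and your insistence on proving irreducibility of the locus $\mathcal Z$ addresses a step the paper dispatches with the single phrase ``since this is an open condition'' (the needed irreducibility is only sketched, via desingularization, in the later symplectic section, of which $\msl_2=\msp_2$ is the special case).

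The genuine gap is in your first stage, and it is not merely the ``delicate'' point you flag. To prove Conjecture \ref{T} for $n=2$ by dividing characters you need three unestablished inputs: (i) injectivity of the factorization map $\tilde L_0(\msl_2)\otimes\tilde{\mathcal F}\to L_0(\mgl_2^a)$ --- surjectivity is immediate from commutativity of $\mgl_\infty^{+,-}$, but injectivity does not follow from the classical boson--fermion correspondence, whose vertex-operator mechanism does not survive the projection $p$; (ii) the characters of $L_0(\mgl_2^a)$ and of $\tilde{\mathcal F}=\bC[h_{-1},h_{-2},\dots]/(h(z)^2)$, neither of which is computed anywhere (Example \ref{h(z)} only names the defining relation); and (iii), most seriously, the comparison target itself: the PBW-graded character of $L_0^a(\msl_2)$ is exactly the quantity whose inaccessibility makes Conjecture \ref{T} open in general, so your plan is circular unless you import it from elsewhere. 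The paper's proof avoids all character formulas. It takes the $ehf$-monomial basis of $L_0(\msl_2)$ from \cite{FKLMM} and proves that these monomials remain linearly independent in $\tilde L_0(\msl_2)$ by a leading-term argument: to each monomial $m$ one attaches a distinguished wedge product $w(m)$ occurring in the expansion of $m|0\ket$, and in any linear combination some $w(m)$ occurs in no other term. This gives the lower bound $\ch\,\tilde L_0(\msl_2)\geq\ch\,L_0(\msl_2)=\ch\,L_0^a(\msl_2)$, which matches the upper bound coming from the surjection $L_0^a(\msl_2)\to\tilde L_0(\msl_2)$, forcing that surjection to be an isomorphism. This linear-independence argument (or a substitute establishing (i)--(iii)) is the missing idea; without it your embedding $\Gr^a(\msl_2)\subset{\rm SGr}_0$ is not available, and stages two and three have nothing to stand on.
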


We first show the existence of the embedding $\Gr^a(\msl_2)\subset \rm{SGr}_0$ by proving Conjecture \ref{T}.
To do this we prove that there exists a basis of $L_0(\msl_2)$ such that its vectors are linearly independent when considered
in $\tilde L_0(\msl_2)$ (see Section \ref{tilde}). Let $e,h,f$ be the standard basis of $\msl_2$.
For an element $x\in\msl_2$ we set $x_i=x\T t^i$. Recall the construction of the $ehf$-basis of $L_0$ from
\cite{FKLMM}, Theorem 4.
A monomial of the form
\begin{equation}
\label{form}
\dots f_{-n}^{a_n} h_{-n}^{b_n} e_{-n}^{c_n}\dots
f_{-1}^{a_1} h_{-1}^{b_1} e_{-1}^{c_1}
\end{equation}
is called a $ehf$-monomial if it satisfies the following conditions:
\begin{enumerate}
\item[(a)] $a_i+a_{i+1}+b_{i+1}\le 1$ for $i> 0$,
\label{a}
\item[(b)] $a_i+b_{i+1}+c_{i+1}\le 1$ for $i> 0$,
\label{b}
\item[(c)] $a_i+b_i+c_{i+1}\le 1$ for $i> 0$,
\label{c}
\item[(d)] $b_i+c_i+c_{i+1}\le 1$ for $i> 0$.
\label{d}
\end{enumerate}
Then applying the $ehf$-monomials to a highest weight vector of  $L_0$ one gets a basis.
The following picture from \cite{FKLMM} illustrates the set of $ehf$-monomials:
\begin{center}
\begin{picture}(180,70)

\multiput(40,20)(0,40){2}{\line(1,0){122}}
\multiput(40,20)(20,0){7}{\line(0,1){40}}
\multiput(40,20)(20,0){6}{\line(1,1){20}}
\multiput(40,20)(20,0){6}{\line(1,2){20}}
\multiput(40,40)(20,0){6}{\line(1,1){20}}
\multiput(165,20)(0,20){3}{\dots}
\multiput(40,20)(20,0){7}{\circle*{3}}
\multiput(40,40)(20,0){7}{\circle*{3}}
\multiput(40,60)(20,0){7}{\circle*{3}}
\put(32,10){$a_1$}
\put(52,10){$a_2\ \dots$}
\put(32,65){$c_1$}
\put(52,65){$c_2 \ \dots$}
\end{picture}
\end{center}
Namely one considers the set of monomials \eqref{form} such that the sum of exponents over the ends of
any segment is less than or equal to $1$.

\begin{lem}
The monomials \eqref{form} subject to the conditions (a)--(d) form a basis of $\tilde L_0$.
\end{lem}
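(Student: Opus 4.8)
The plan is to establish that the $ehf$-monomials satisfying conditions (a)--(d), applied to $|0\ket$, remain linearly independent in $\tilde L_0(\msl_2)$, and that they span. Since we already know from \cite{FKLMM} that these monomials form a basis of $L_0(\msl_2)$, and since the previous lemma provides a surjection $L_0^a(\msl_2)\to\tilde L_0(\msl_2)$, it suffices (given Conjecture \ref{T} reduces to a character equality) to prove that the images of these monomials are linearly independent in $\tilde L_0$. The key structural input is the grading on $\Lambda^{\infty/2}(V)$ by number of positive indices: the operators $p(x_i)$ for $x\in\{e,h,f\}$, $i<0$, are the degree-increasing parts $\mgl_\infty^{+,-}$ of the full operators $x_i$, so an $ehf$-monomial of total degree $M$ applied to $|0\ket$ computes, in $\tilde L_0$, exactly the top (degree $M$) graded component of the corresponding vector in $L_0$.

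First I would make explicit, in the folded identification $v_{2k+1}=w_1\T t^{-k-1}$, $v_{2k+2}=w_2\T t^{-k-1}$, how each generator $p(e_{-i})$, $p(h_{-i})$, $p(f_{-i})$ acts on the vacuum and on semi-infinite wedges: each is a finite sum of matrix units $E_{a,b}$ with $a>0\ge b$ moving an index from the ``filled negative'' region into the ``empty positive'' region. Concretely I would record the image of $|0\ket$ under a monomial \eqref{form} as a single wedge product (up to sign) indexed by which basis vectors $v_j$ with $j>0$ have been pulled up and which $v_j$ with $j\le 0$ have been vacated. The combinatorics of conditions (a)--(d) should translate precisely into the statement that distinct admissible monomials move distinct sets of indices, i.e. produce wedge products that differ as basis elements of $F^{(0)}$.

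The main step, and the main obstacle, is the injectivity: I must show that the top-degree components of the admissible monomials are linearly independent wedge vectors. I expect to prove this by exhibiting, for each admissible monomial, a distinguished leading wedge monomial (a specific semi-infinite subset of $\bZ$) that it hits with nonzero coefficient and that no other admissible monomial can reach. The natural device is to impose a monomial order on wedge products refining the degree grading—e.g.\ lexicographic on the sorted index set—and check that the conditions (a)--(d) are exactly what guarantee the map (admissible monomial) $\mapsto$ (leading wedge) is injective. This is where the restrictions on neighbouring exponents do real work: they prevent two different ways of filling the same index pattern, so the ``staircase'' of the picture becomes a bijection onto a set of attainable wedge configurations. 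I would verify this by induction on the number of active slots, peeling off the innermost factors $f_{-1}^{a_1}h_{-1}^{b_1}e_{-1}^{c_1}$ and tracking how far up each index is pushed.

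Once injectivity is established, the spanning follows formally: the surjection $L_0^a(\msl_2)\to\tilde L_0(\msl_2)$ of the preceding lemma shows the admissible monomials span $\tilde L_0$ (being images of a spanning set of $L_0^a$ under the PBW degeneration), and injectivity upgrades this to a basis. This simultaneously proves the present lemma and, by comparing with the known basis of $L_0(\msl_2)$, yields the character equality that completes the proof of Conjecture \ref{T} in the $\msl_2$ case. I would close by remarking that the $\msl_2$ special structure—only one simple root, so $\fn^-$ is one-dimensional over $\bC[t,t^{-1}]$ and the wedge picture folds into two interleaved columns $w_1,w_2$—is precisely what makes the leading-term argument tractable, whereas for general $\msl_n$ the same strategy stalls because we lack the explicit monomial basis analogous to \cite{FKLMM}.
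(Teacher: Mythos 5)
Your overall architecture coincides with the paper's: reduce to linear independence via the surjection $L_0^a(\msl_2)\to\tilde L_0(\msl_2)$, expand each admissible monomial applied to $|0\ket$ into semi-infinite wedges (each projected generator moves one occupied index into the unoccupied region), and separate monomials by distinguished wedge terms. The gap is precisely the step you flag as ``the main obstacle,'' and it is not a routine verification: the static device you propose --- for each admissible monomial a distinguished leading wedge ``that no other admissible monomial can reach'' --- does not exist, and the claim that distinct admissible monomials ``move distinct sets of indices'' is false. Work in the relabelled picture where the vacuum is $v_0\wedge v_1\wedge v_2\wedge\cdots$ and a generator of shift $s$ moves an occupied index $j\ge 0$ to $j-s<0$; the shifts are $1,2,3,3,4,5,\dots$, with $h_{-i}$ of shift $2i$, and conditions (a)--(d) are equivalent to: all exponents are $0$ or $1$ and the shifts used are pairwise at least $2$ apart (in the normalisation where $e_{-1}$ is the shift-$1$ generator; the paper's displayed action formulas have $e$ and $f$ interchanged relative to this). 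Now take the admissible monomials $m_1=e_{-1}f_{-2}$ (shifts $\{1,5\}$) and $m_2=h_{-1}h_{-2}$ (shifts $\{2,4\}$). The expansion of $m_1|0\ket$ has exactly two terms: vacate $\{0,1\}$ and fill $\{-1,-4\}$, or vacate $\{0,3\}$ and fill $\{-1,-2\}$. Both wedges also occur in $m_2|0\ket$, via the crossed pairings $1\mapsto-1,\ 0\mapsto-4$ and $3\mapsto-1,\ 0\mapsto-2$. So \emph{every} wedge reachable by $m_1$ is reachable by $m_2$; no per-monomial unreachable wedge exists, any lex-type leading-wedge map fails to be injective for the natural orders (the shared wedge with targets $\{-1,-2\}$ is leading for both), and injectivity alone would not suffice anyway, since a wedge can be leading for one monomial and non-leading for another.

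What the proof actually needs --- and what the paper's terse argument supplies --- is an \emph{adaptive} choice depending on the whole linear combination, resting on three inputs your plan leaves unidentified: (i) the shift values above and the equivalence of (a)--(d) with the gap-$\ge 2$ property; (ii) the existence, for each admissible $m$, of a \emph{nested} wedge $w(m)$ (sources increasing, targets decreasing, largest shift on largest source), which exists exactly because of the gap condition together with the parity compatibility that (a)--(d) also encode; (iii) the rearrangement observation that re-pairing a nested source/target configuration always produces a strictly more clustered shift multiset (smaller in the dominance order). Given (iii), in a finite linear combination one picks the monomial $m$ whose shift multiset is dominance-minimal among those present: any admissible $m'$ reaching $w(m)$ would need dominance-smaller or equal shifts, and equality forces the nested pairing, whose shifts and source parities recover $m$ uniquely (even shift gives $h$; odd shift gives the $e$- or $f$-type generator according to the parity of the source). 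Hence $w(m)$ occurs in no other monomial of the combination, its coefficient vanishes, and one induces downward. In my example this selects $m_2$, whose nested wedge (targets $\{-2,-3\}$) indeed does not occur in $m_1|0\ket$. This majorization-based selection is the real content of the lemma; without it your argument stalls exactly where you say you ``expect'' conditions (a)--(d) to do the work. The spanning half of your proposal (via the surjection of the preceding lemma) is correct and agrees with the paper, as does the observation that the lemma settles Conjecture \ref{T} for $\msl_2$.
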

\begin{proof}
Recall the identification $V\simeq W\T \bC[t,t^{-1}]$ given by $v_{2i+1}\mapsto w_1\T t^{-i-1}$, $v_{2i}\mapsto w_2\T t^{-i}$
(not to be confused with $x_k=x\T t^k$ for $x\in\msl_2$). 
We note that
\begin{gather*}
h_k v_{2i+1} = v_{2i+1+2k},\ h_k v_{2i} = - v_{2i+2k}, \\
e_k v_{2i+1} = 0,\ e_k v_{2i}= v_{2i+2k-1},\\
f_k v_{2i+1} = v_{2i+2k+1},\ f_k v_{2i}= 0.
%h_k (w_1\T t^{-i-1}) = w_1\T t^{-i-k-1},\ h_k (w_2\T t^{-i}) = - w_2\T t^{-i-k}, \\
%e_k (w_1\T t^{-i-1}) = 0,\ e_k (w_2\T t^{-i})= w_1\T t^{-i-k+1},\\
%f_k (w_1\T t^{-i-1}) = w_2\T t^{-i-k-1},\ f_k (w_2\T t^{-i})= 0.
\end{gather*}
Now assume that we are given a monomial $m$ of the form \eqref{form} subject to the conditions (a)--(d).
Then $m|0\ket$ is decomposed as a sum of several semi-infinite wedge products of vectors $v_i$. We attach
to $m$ one wedge product $w(m)$ from this decomposition. We then show that, given a linear combination of
$ehf$-monomials, we can find a monomial $m$ in it such that $w(m)$ does not show up in any other $ehf$-monomial.

We note that $f_{-1}$ shifts an index by $1$, $h_{-1}$ by $2$, $e_{-1}$ by $3$, $f_{-2}$ by $3$,
$h_{-2}$ by $4$, $e_{-2}$ by $5$ and so on. Now we can see that if $m$ is a $ehf$-monomial, then
all powers $a_i$, $b_i$ and $c_i$ are zeroes or ones  and moreover, if $x_i$ and $y_j$ show up in $m$, then
the difference of their shifts is at least two. Hence $m|0\ket$ contains a semi-infinite wedge
product of the form
%\[
%E_{i_1,j_1}\dots E_{i_k,j_k} (w_1\T 1\wedge w_2\T 1\wedge w_1\T t\wedge w_2\T t\wedge\dots),
%\]
\[
E_{i_1,j_1}\dots E_{i_k,j_k} (v_0\wedge v_1\wedge v_2\wedge\dots),\ j_k>j_{k-1}>\dots >j_1>i_1>\dots>i_k,
\]
where $E_{i,j}$ are matrix units. 
Now it is easy to see that given a linear combination of $ehf$-monomials we can
find a wedge product as above, contained in a single $ehf$-monomial.
\end{proof}

Now let us consider the degenerate affine Grassmannian $\Gr^a(\msl_2)$; since affine Grassmannians are special cases
of affine flag varieties, the general Definition \ref{dafv} applies (with $k=1$ and $\la=0$).

\begin{cor}
The $\widehat{\msl}_2$ degenerate Grassmannian is contained in the Sato Grassmannian $\rm{SGr}_0$.
\end{cor}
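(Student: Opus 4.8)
The plan is to deduce the corollary from Conjecture \ref{T} for $n=2$, which the two preceding lemmas now establish, and then to transport the orbit-closure from $\bP(L_0^a(\msl_2))$ into the Sato Grassmannian by hand. First I would combine the surjection $L_0^a(\msl_2)\to\tilde L_0(\msl_2)$ of $\msl_2^a\T t^{-1}\bC[t^{-1}]$-modules with the preceding lemma. By \cite{FKLMM} the $ehf$-monomials form a basis of $L_0(\msl_2)$, and since the PBW filtration degree of such a monomial equals its total degree $\sum_i(a_i+b_i+c_i)$, a standard homogeneity argument (if a homogeneous combination of degree $d$ lies in $F_{d-1}$, expand it in the $ehf$-basis and compare the degree-$d$ parts) shows their symbols stay linearly independent in the associated graded $L_0^a(\msl_2)$, hence form a basis there as well. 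The preceding lemma supplies the same monomials as a basis of $\tilde L_0(\msl_2)$. As the surjection carries $v_0\mapsto|0\ket$ and is $\msl_2^a\T t^{-1}\bC[t^{-1}]$-equivariant, it sends $ehf$-basis to $ehf$-basis, so it is an isomorphism. This proves Conjecture \ref{T} for $\msl_2$ and yields a closed embedding $\bP(L_0^a(\msl_2))\cong\bP(\tilde L_0(\msl_2))\hookrightarrow\bP(F^{(0)})$ sending $[v_0]$ to $[|0\ket]$, i.e.\ to the vacuum subspace $W\T\bC[t]\in{\rm SGr}_0$.

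Next I would transport the group action. Since $\la=0$, the vector $v_0$ spans a trivial representation of the finite $\msl_2=\msl_2\T 1$, so $f\T 1$ annihilates it; thus the orbit $\widehat{G}^{-,a}[v_0]$ coincides with the orbit of $\exp(\msl_2^a\T t^{-1}\bC[t^{-1}])$. Moreover one checks directly that $p(f\T 1)=0$, so under the isomorphism above this orbit is exactly $\exp\!\bigl(p(\msl_2\T t^{-1}\bC[t^{-1}])\bigr)[|0\ket]$ inside $\bP(F^{(0)})$. The key observation is that $p(\msl_2\T t^{-1}\bC[t^{-1}])\subset\mgl_\infty^{+,-}$, and that every $X\in\mgl_\infty^{+,-}$ maps $\spa(v_i:i\le 0)$ into $\spa(v_i:i>0)$ and annihilates $\spa(v_i:i>0)$; hence $X^2=0$ and $\exp X=1+X\in GL(V)$. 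Such a linear automorphism of $V$ sends semi-infinite subspaces to semi-infinite subspaces, so it preserves ${\rm SGr}_0\subset\bP(F^{(0)})$.

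Finally I would conclude: starting from the vacuum point $W\T\bC[t]\in{\rm SGr}_0$ and acting by these automorphisms keeps every point of the orbit $\widehat{G}^{-,a}[|0\ket]$ inside ${\rm SGr}_0$. Because ${\rm SGr}_0$ is closed in $\bP(F^{(0)})$ (being cut out by the Pl\"ucker relations), the closure of this orbit---which by Definition \ref{dafv} is precisely $\Gr^a(\msl_2)$---is again contained in ${\rm SGr}_0$, as claimed.

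I expect the substantive step to be the first one, namely upgrading the surjection to an isomorphism; this is the character equality flagged as the obstruction in the remark after Conjecture \ref{T}, and for $\msl_2$ it is resolved only because the preceding lemma supplies an honest $ehf$-monomial basis of $\tilde L_0(\msl_2)$ matching the FKLMM basis of $L_0^a(\msl_2)$. The remaining geometric steps are formal once one records the square-zero property of $\mgl_\infty^{+,-}$, which is exactly what makes the exponentials genuine elements of $GL(V)$ preserving the Sato Grassmannian.
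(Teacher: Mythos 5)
Your overall route is the same as the paper's: deduce Conjecture \ref{T} for $n=2$ from the two preceding lemmas, then transport the orbit closure of Definition \ref{dafv} through the resulting identification $\bP(L_0^a(\msl_2))\cong\bP(\tilde L_0(\msl_2))\subset\bP(F^{(0)})$. Your geometric half is correct and usefully explicit: the reduction of the $\widehat{G}^{-,a}$-orbit to the orbit of $\exp\bigl(p(\msl_2\T t^{-1}\bC[t^{-1}])\bigr)$, the square-zero property of elements of $\mgl_\infty^{+,-}$ (so that $\exp X=1+X$ is an automorphism of $V$ carrying points of ${\rm SGr}_0$ to points of ${\rm SGr}_0$), and the closedness of ${\rm SGr}_0$ in $\bP(F^{(0)})$ are exactly what the paper leaves implicit, since it states the corollary without proof.

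The genuine gap is in your first step. You claim that because the $ehf$-monomials applied to the highest weight vector form a basis of $L_0(\msl_2)$ (by \cite{FKLMM}), their symbols are automatically linearly independent in the associated graded $L_0^a(\msl_2)$, and you justify this by expanding an element of $F_{d-1}$ in the $ehf$-basis and comparing degree-$d$ parts. This is circular: independence of the symbols means precisely that no nontrivial combination of degree-$d$ $ehf$-monomials applied to $v_0$ lies in $F_{d-1}$, and your comparison of degree-$d$ coefficients is legitimate only if you already know that elements of $F_{d-1}$ expand in $ehf$-monomials of degree $\le d-1$, i.e. that the basis is adapted to the PBW filtration --- which is the very point at issue. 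A monomial basis of a filtered cyclic module need not descend: for $M=\bC[x]/(x^2-x)$ with cyclic vector $v$, the set $\{v,\,x^2v\}$ is a basis of $M$ consisting of monomials applied to $v$, yet the symbol of $x^2v$ in $F_2/F_1$ is zero, and the ``homogeneity argument'' detects nothing wrong. The paper's argument is designed to avoid exactly this: it never asserts that the $ehf$-symbols form a basis of $L_0^a(\msl_2)$, but instead runs a dimension count in each (finite-dimensional) energy-weight component: the surjection gives $\dim\tilde L_0\le\dim L_0^a=\dim L_0$ componentwise, while \cite{FKLMM} together with the lemma on independence in $\tilde L_0(\msl_2)$ gives $\dim\tilde L_0\ge\dim L_0$ componentwise; hence the surjection is an isomorphism and Conjecture \ref{T} holds for $\msl_2$. (That the $ehf$-symbols form a basis of $L_0^a(\msl_2)$ is then a consequence, not an ingredient.) With this substitution, the remaining steps of your proposal go through verbatim.
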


\begin{lem}\label{lc}
Let $U\in \Gr^a(\msl_2)$. Then $U$ satisfies all the conditions of Theorem \ref{sl2}.
\end{lem}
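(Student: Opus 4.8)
The plan is to verify both conditions on the dense orbit and then extend to the closure by a closedness argument. Both conditions define closed subvarieties of $\mathrm{SGr}_0$: condition (1) is literally the defining incidence condition of $\Gr^a(\mgl_2)$, while isotropy is cut out by the vanishing of quadratic (Pl\"ucker-type) expressions in the coordinates. Since, under the embedding established just above (the proof of Conjecture \ref{T} for $\msl_2$), $\Gr^a(\msl_2)$ is the closure in $\mathrm{SGr}_0$ of the single orbit $\exp\big(p(\msl_2\T t^{-1}\bC[t^{-1}])\big)|0\ket$, it suffices to check (1) and (2) for the points of this orbit.

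Condition (1) I would deduce with essentially no work. The inclusion $p(\msl_2\T t^{-1}\bC[t^{-1}])\subset p(\mgl_2\T t^{-1}\bC[t^{-1}])$ shows that the $\msl_2$-orbit lies inside the $\mgl_2$-orbit, hence inside its closure $\Gr^a(\mgl_2)$; the latter is by definition $\{U:\mathrm{pr}_{W\T 1}(tU)\subset U\}$, which is exactly condition (1). Equivalently, one applies Lemma \ref{GGr} directly to the generators $\exp(\sum_i x_i\T t^{-i})$ with $x_i\in\msl_2$.

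For condition (2) the idea is to show that the whole degenerate group acts by symplectomorphisms. The first step is to observe that the skew form turns $V_{\le 0}=W\T\bC[t]$ and $V_{>0}=W\T t^{-1}\bC[t^{-1}]$ (the spans of the $v_k$ with $k\le 0$, resp. $k>0$) into a pair of complementary isotropic subspaces: the only nonzero pairings are $(v_k,v_{1-k})=\pm1$, and these always couple an index $\le 0$ with an index $>0$. In particular the base point $U_0=|0\ket$ is the isotropic subspace $V_{\le 0}$. The second step is that $\msl_2=\msp(W)$, so each $r\T t^{-l}$ preserves the form on $V$; writing $(w_a\T t^i,w_b\T t^j)=\langle w_a,w_b\rangle\delta_{i+j,-1}$, this follows immediately from $\langle rw_a,w_b\rangle+\langle w_a,rw_b\rangle=0$. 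The crucial third step is that the projection $p$ onto $\mgl_\infty^{+,-}$ sends symplectic operators to symplectic operators. This is exactly where the two structures interact: because $V_{\le 0}$ and $V_{>0}$ are transverse Lagrangians, for $A\in\msp(V)$ and $u,u'\in V_{\le 0}$ the isotropy of $V_{\le 0}$ gives $(Au,u')=(pA\,u,u')$ and $(u,Au')=(u,pA\,u')$, so the infinitesimal symplectic identity for $A$ descends to $pA$ on $V_{\le 0}$; on the remaining index ranges both terms vanish since $pA$ maps into the isotropic $V_{>0}$ and annihilates $V_{>0}$. Hence $p(r\T t^{-l})\in\msp(V)$, the abelian group $\exp\big(p(\msl_2\T t^{-1}\bC[t^{-1}])\big)$ acts by symplectomorphisms, every orbit point remains isotropic, and passing to the closure yields (2).

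I expect the only genuine obstacle to be this third step, namely that the block projection $p$ does not destroy symplecticity; everything else is formal bookkeeping. The point to get right is the identification of the polarization $V=V_{\le 0}\oplus V_{>0}$ defining $p$ with a Lagrangian decomposition for the chosen skew form. Once this is in place the block computation above is elementary, and the two conditions of Theorem \ref{sl2} fall out in parallel from the slogans ``base point in $\Gr^a(\mgl_2)$, group preserving the incidence'' and ``base point Lagrangian, group symplectic''.
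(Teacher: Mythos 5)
Your argument is correct, and it shares the paper's skeleton---verify the two conditions on the open orbit, then pass to the closure since both conditions are closed in ${\rm SGr}_0$---but the verification itself takes a genuinely different route. The paper's proof is a direct computation: it writes out explicitly the four families of vectors spanning an arbitrary orbit point $g|0\ket$, $g=\exp\bigl(\sum_i (x_ie_{-i}+y_ih_{-i}+z_if_{-i})\bigr)$, and checks conditions (1) and (2) on that list by inspection. You instead deduce condition (1) from the $\mgl_2$ theory already in place (orbit containment into $\Gr^a(\mgl_2)$, equivalently Lemma \ref{GGr}), and condition (2) from a structural lemma: relative to the Lagrangian polarization $W\T\bC[t,t^{-1}]=(W\T\bC[t])\oplus(W\T t^{-1}\bC[t^{-1}])$, the block projection $p$ onto $\mgl_\infty^{+,-}$ carries infinitesimally symplectic operators to infinitesimally symplectic ones. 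Your three-case check of this key step is right, and since every $X\in p\bigl(\msl_2\T t^{-1}\bC[t^{-1}]\bigr)$ satisfies $X^2=0$, the orbit is swept out by the honest symplectomorphisms $1+X$ applied to the isotropic base point $W\T\bC[t]$, which gives (2). What your route buys: it is computation-free and generalizes verbatim with $\msl_2=\msp(W)$ replaced by $\msp_{2n}=\msp(W)$, so it simultaneously proves the containment half of Theorem \ref{sp} (orbit points are isotropic and satisfy ${\rm pr}(tU)\subset U$). What the paper's route buys: the explicit orbit vectors are reused immediately afterwards---the corollary deducing Theorem \ref{sl2} from Lemma \ref{lc} identifies the orbit as the locus where the Pl\"ucker coordinate of $W\T\bC[t]$ is nonzero, and the subsequent torus computation also reads off those same vectors---so the concreteness is not wasted effort. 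Two small points: both you and the paper silently replace the full group $\widehat{G}^{-,a}$ by $\exp\bigl(p(\msl_2\T t^{-1}\bC[t^{-1}])\bigr)$, which is harmless because the degenerate group is abelian and $\fn^{-,a}\T 1$ kills the highest weight vector, but it deserves a sentence; and isotropy is cut out by conditions that are linear (contraction with the form), not quadratic, in the Pl\"ucker coordinates, though for your purposes only closedness matters.
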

\begin{proof}
We use the standard basis $w_1,w_2$ of the two-dimensional vector representation of $\msl_2$. 
In particular, $fw_1=w_2$, $ew_2=w_1$, $hw_1=w_1$, $hw_2=-w_2$.
%We set $v_k=v\T t^k, u_k=u\T t^k$.
Consider an element $g=\exp(\sum_{i<0} x_ie_{-i}+y_ih_{-i}+z_i f_{-i})$, $g\in \widehat{G}^{-,a}$. Then $g|0\ket$
is spanned by vectors of the form
\begin{gather*}
w_1\T 1 + y_1w_1\T t^{-1} + z_1w_2\T t^{-1} + y_2w_1\T t^{-2}+z_2w_2\T t^{-2} +\dots,\\
w_2\T 1 + x_1w_1\T t^{-1} - y_1w_2\T t^{-1} + x_2w_1\T t^{-2} - y_2w_2\T t^{-2} +\dots,\\
w_1\T t + y_2w_1\T t^{-1} + z_2w_2\T t^{-1} + y_3w_1\T t^{-2}+z_3w_2\T t^{-2} +\dots,\\
w_2\T t + x_2w_1\T t^{-1} - y_2w_2\T t^{-1} + x_3w_1\T t^{-2} - y_3w_2\T t^{-2} +\dots.
\end{gather*}
Now its easy to check that the linear span of these vectors satisfies all the conditions of Theorem \ref{sl2}. Since
$\Gr(\msl_2)$ is the closure of the $\widehat{G}^{-,a}$ orbit, the lemma follows.
\end{proof}

\begin{cor}
Theorem \ref{sl2} holds.
\end{cor}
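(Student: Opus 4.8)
By Lemma~\ref{lc} we already know that $\Gr^a(\msl_2)$ is contained in the subvariety $Z\subset{\rm SGr}_0$ cut out by conditions (1) and (2); note that condition (1) is exactly the defining condition of $\Gr^a(\mgl_2)$, while (2) is the extra isotropy constraint. Thus the whole content of the theorem is the reverse inclusion $Z\subseteq\Gr^a(\msl_2)$. Since $\Gr^a(\msl_2)$ is by definition the closure of the single orbit $\mathcal O=\widehat G^{-,a}\cdot U_0$ through the base point $U_0=W\T\bC[t]$, it is enough to prove that $\mathcal O$ is open and dense in $Z$.

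First I would work inside the big cell $\Omega\subset{\rm SGr}_0$ around $U_0$, parametrized as in the Flatness subsection by operators $A_{k,i}\in{\rm End}(W)$, so that the corresponding subspace is spanned by the vectors \eqref{wti}. Imposing condition (1) forces the block Hankel relations $A_{k,i}=A_{k-1,i+1}$ (the $\hbar=0$ specialization of \eqref{Aki}), so all the data is carried by the single sequence $(A_{1,i})$ of $2\times2$ matrices. Imposing condition (2), isotropy for the form $(w_1\T t^i,w_2\T t^j)=\delta_{i+j,-1}$, is then a constraint on the $A_{1,i}$ which I expect to force each $A_{1,i}$ to be traceless, i.e.\ of the form $\left(\begin{smallmatrix} y & x\\ z & -y\end{smallmatrix}\right)$, so that $A_{1,i}\in\msl_2=\msp_2$ rather than in $\mgl_2$. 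Comparing with the explicit spanning vectors of $g\cdot U_0$ in the proof of Lemma~\ref{lc}, where the three parameters per level are precisely the exponentiated generators $e_{-i},h_{-i},f_{-i}$, this identifies $Z\cap\Omega$ with the orbit $\mathcal O$ as an affine space.

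It then remains to show that $Z\cap\Omega$ is dense in $Z$, equivalently that $Z$ is irreducible of dimension $\dim\mathcal O$; this is the step I expect to be the main obstacle, since the isotropy condition takes $Z$ outside the loop-quiver framework of Section~3 and Corollary~\ref{geocon} does not apply directly. One route is to use the $\bG_m$-action by loop rotation $t\mapsto\lambda t$, which preserves $Z$: the form is homogeneous of weight $-1$, so isotropy is scale invariant, and ${\rm pr}\,t$ is homogeneous of weight $1$, so condition (1) is preserved as well. Running the finite approximations $Z_N$ (defined by $W\T t^N\bC[t]\subset U\subset W\T t^{-N}\bC[t]$) through a tower of Grassmann-type fibrations as in Lemma~\ref{irr} and the desingularization of Section~3, now incorporating isotropy at each floor, should realize each $Z_N$ as irreducible of the expected dimension, whence $\mathcal O$ is dense in $Z$ and $\Gr^a(\msl_2)=\overline{\mathcal O}=Z$. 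Alternatively, one can try to identify $Z$ directly with a symplectic, type $A_1^{(1)}$ analogue of the semi-infinite orbit closure of Theorem~\ref{infdim}, using $\msl_2\simeq\msp_2$.
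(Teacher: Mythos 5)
Your proposal is correct and follows essentially the same route as the paper's own proof: both arguments rest on the fact, extracted from the proof of Lemma \ref{lc}, that the $\widehat{G}^{-,a}$-orbit of the line $\bC|0\ket$ is exactly the locus of $U$ satisfying conditions (1) and (2) of Theorem \ref{sl2} whose Pl\"ucker coordinate at $W\T\bC[t]$ is nonzero; your big-cell computation (condition (1) forces the Hankel relations $A_{k,i}=A_{k-1,i+1}$, isotropy then forces each $A_{1,i}$ to be traceless, and these parameters match the $x_i,y_i,z_i$ in the proof of Lemma \ref{lc}) is a correct and more explicit substantiation of what the paper compresses into one sentence. The only divergence is at the final step: the paper concludes with ``since this is an open condition, Theorem \ref{sl2} follows,'' thereby treating the density of this big-cell locus in the variety $Z$ cut out by (1) and (2) as immediate, whereas you rightly isolate that density (equivalently, irreducibility of $Z$, or of its finite pieces $Z_N$) as the remaining substantive point, and you propose to obtain it from a Lemma \ref{irr}-type tower of fibrations incorporating isotropy. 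That is in fact the paper's own device: for the general symplectic case (Theorem \ref{sp}, whose $n=1$ instance is Theorem \ref{sl2} via $\msl_2\simeq\msp_2$), the paper proves irreducibility of the finitizations precisely ``via the same procedure as in Lemma \ref{irr} and section \ref{resolution}.'' So nothing in your outline would fail; what you gain over the paper's two-line proof is an honest identification of where the content lies, and what you still owe is the execution of the isotropic resolution (whose floors are Lagrangian-type Grassmannians), a step the paper itself leaves as a sketch.
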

\begin{proof}
One sees from the proof of Lemma \ref{lc} that the $\widehat{G}^{-,a}$ orbit of the line $\bC|0\ket$ consists of the subspaces
$U$ satisfying the conditions of Theorem \ref{sl2} and such that $U$ has a nontrivial Pl\"ucker coordinate corresponding to the subspace
$W\T\bC[t]$. Since this is an open condition, Theorem \ref{sl2} follows.
\end{proof}

Finally, let us compute the torus acting on the degenerate affine Grassmannian. Assume that we have a torus scaling the basis
vectors as $w_1\T t^i\to q_iw_1\T t^i$, $w_2\T t^j\to p_jw_2\T t^j$ for some numbers $p_i$, $q_j$. We want this torus to act on the
open cell.
In particular, each vector in the list of vectors from the proof of Lemma \ref{lc} has to be invariant (up to scaling)
with respect to the torus action. This gives the following set of relations, labeled by positive numbers $k$:
\begin{gather*}
\frac{p_0}{q_{-k}}=\frac{p_1}{q_{-k+1}}=\dots = \frac{p_{k-1}}{q_{-1}},\\
\frac{q_0}{q_{-k}}=\frac{q_1}{q_{-k+1}}=\dots = \frac{q_{k-1}}{q_{-1}}=\frac{p_0}{p_{-k}}=\frac{p_1}{p_{-k+1}}=\dots = \frac{p_{k-1}}{p_{-1}},\\
\frac{q_0}{p_{-k}}=\frac{q_1}{p_{-k+1}}=\dots = \frac{q_{k-1}}{p_{-1}}.
\end{gather*}
\begin{rem}
The values $p_k=p_1r^{k-1}$, $q_k=q_1r^{k-1}$ for arbitrary $p_1,q_1,r$ solve the equations above.
This is a torus (effectively two-dimensional, since it contains one-dimensional torus $r=1$, $p_1=q_1$, scaling all the vectors by the same number), 
generated by the loop rotation and the Cartan torus of $SL_2$.
\end{rem}
The system above is equivalent to the statement that for any $a\ge 0$, $b<0$ the quantities
$\frac{p_a}{q_b}$, $\frac{q_a}{p_b}$, $\frac{p_a}{p_b}$, $\frac{q_a}{q_b}$ depend only on the difference $a-b$ and
$\frac{p_a}{p_b}=\frac{q_a}{q_b}$. This means that there exists a complex number $r$ such that
\[
q_a=q_0r^a, p_a=p_0r^a, a\ge 0;\quad q_b=q_{-1}r^{b+1}, p_b=p_{-1}r^{b+1}, b<0
\]
with additional condition $\frac{q_0}{q_{-1}}=\frac{p_0}{p_{-1}}$.
\begin{cor}
The torus acting on the degenerate Grassmannian is 3-dimensional. It is generated by the loop rotation, the one-dimensional Cartan torus
of $SL_2$ and an additional one-dimensional torus with $p_{\ge 0}=q_{\ge 0}=1$, $p_{<0}=q_{<0}=\mathrm{const}.$
\end{cor}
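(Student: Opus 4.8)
The plan is to treat this as a computation in the cocharacter lattice of the diagonal torus scaling the basis vectors $w_1\T t^i$ and $w_2\T t^j$. First I would record that, by the discussion preceding the corollary, the full group $T'$ of scalings preserving the open cell is exactly the solution set of the displayed relations, which is given explicitly by
\[
q_a=q_0r^a,\ p_a=p_0r^a\ (a\ge 0),\quad q_b=q_{-1}r^{b+1},\ p_b=p_{-1}r^{b+1}\ (b<0),
\]
subject to the single multiplicative relation $q_0/q_{-1}=p_0/p_{-1}$. Thus $T'$ is parametrized by $(r,q_0,p_0,q_{-1},p_{-1})\in(\bC^*)^5$ modulo one relation, so $\dim T'=4$. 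Since $\Gr^a(\msl_2)$ is the closure of the open $T'$-orbit, $T'$ acts on the whole variety.

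Next I would identify the subgroup of $T'$ acting trivially. Testing a diagonal element against a generic subspace from the open cell (the vectors listed in the proof of Lemma \ref{lc}) forces all the scalings $q_i,p_j$ to be equal, so an element of $T'$ fixes every point of $\Gr^a(\msl_2)$ if and only if it acts as a scalar on $V$; within $T'$ this ineffective kernel is the one-parameter subgroup $r=1$, $q_0=p_0=q_{-1}=p_{-1}=\mathrm{const}$, which indeed satisfies the relation. Hence the torus genuinely acting on $\Gr^a(\msl_2)$ is $T'$ modulo this $\bC^*$, of dimension $4-1=3$; this is the first assertion.

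For the second assertion I would pass to logarithmic coordinates $(R,A,B,C,D)=(\log r,\log q_0,\log p_0,\log q_{-1},\log p_{-1})$, in which the defining relation of $T'$ reads $A-C=B-D$ and the ineffective scaling is the line spanned by $(0,1,1,1,1)$. Direct substitution of the three proposed one-parameter subgroups yields the cocharacters
\[
\text{loop rotation}=(1,0,0,-1,-1),\quad \text{Cartan}=(0,1,-1,1,-1),\quad \text{extra}=(0,0,0,1,1),
\]
each of which satisfies $A-C=B-D$. I would then check that these three vectors together with $(0,1,1,1,1)$ have rank $4$ (an immediate elimination: after subtracting the Cartan vector from $(0,1,1,1,1)$ one reads off pivots in the $R$, $A$, $B$ and $C$ columns), so that they span the cocharacter lattice of $T'$. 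Equivalently, the three cocharacters are linearly independent modulo $(0,1,1,1,1)$ and therefore form a basis of the effective $3$-dimensional torus, proving that the loop rotation, the Cartan torus of $SL_2$ and the extra torus generate it.

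I expect no serious obstacle: once the solution of the relations is in hand the argument is pure linear algebra. The only point demanding care is the correct identification of the ineffective direction — remembering that only the uniform scaling, and not all of the $4$-dimensional $T'$, acts trivially on the Grassmannian — since it is precisely this reduction by one that turns the naive count $4$ into the asserted dimension $3$.
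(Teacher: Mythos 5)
Your proof is correct and is essentially the argument the paper intends (the paper leaves the corollary's proof implicit in the computation preceding it): solve the relations to get the $4$-dimensional solution torus $T'$, observe that only the global scalars act trivially on subspaces so the effective torus is $3$-dimensional, and check that the loop-rotation, Cartan and extra cocharacters together with the scalar direction span the rank-$4$ cocharacter lattice of $T'$. One wording slip worth fixing: the open cell is the orbit of the abelian unipotent group $\widehat{G}^{-,a}$, not an open $T'$-orbit; what your argument actually uses --- that $T'$ preserves the open cell by its very construction and hence also its closure $\Gr^a(\msl_2)$ --- is still correct.
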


\subsection{Symplectic degenerate affine Grassmannian}
It turns out that the construction of ${\rm Gr}^a(\msl_2)$ has a natural (though conjectural) generalization to the case
of symplectic algebras.

Let $W$ be a $2n$-dimensional vector space endowed with a non-degenerate skew-symmetric form $(\cdot,\cdot)$. We fix a basis
$w_1,\dots,w_{2m}$ of $W$ such that $(w_i,w_{2n+1-i})=1$ for $i=1,\dots,n$. Consider the space
$W\T \bC[t,t^{-1}]$ and the corresponding sector $F^{(0)}$ of the semi-infinite wedge power. 
%In what follows we use the notation
%$w_i[k]=w_i\T t^k$. For example, $F^{(0)}$ contains the vacuum vector
%\[
%|0\rangle = w_1[0]\wedge w_2[0]\wedge\dots\wedge w_{2n}[0]\wedge w_1[1]\wedge\dots\wedge w_{2n}[1]\wedge\dots.
%\]

Now consider the Lie algebra $\msp^a_{2n}$, which is an abelian Lie algebra with the underlying vector space $\msp_{2n}$.
Define the action of $\msp_{2n}^a\T t^{-1}\bC[t^{-1}]$ on $W\T \bC[t,t^{-1}]$ as follows. For $x\in \msp_{2n}^a$, $w\in W$ we define
\[
(x\T t^i)(w\T t^j)=
\begin{cases}
xw\T t^{i+j}, \text{ if } j\ge 0, i+j<0,\\
0,\text{ otherwise}.
\end{cases}
\]

Now let $L_0(\msp_{2n})$ be the basic level one module of the affine Lie algebra $\widehat{\msp}_{2n}$ and let $L_0^a$ be its degenerate analogue.
Let $l_0\in L_0$ be the highest weight vector.
In particular, $L_0(\msp_{2n})=\U(\msp_{2n}\T t^{-1}\bC[t^{-1}]) l_0$.
\begin{conj}\label{sc}
Let $\tilde L_0(\msp_{2n})=\U(\msp_{2n}^a\T t^{-1}\bC[t^{-1}])|0\rangle$. Then
we have an isomorphism of $\msp_{2n}^a\T t^{-1}\bC[t^{-1}]$-modules
\[
\tilde L_0(\msp_{2n})\simeq L^a_0,\ |0\rangle\mapsto l_0.
\]
\end{conj}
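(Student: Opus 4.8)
The plan is to mirror the argument that settled the $\msl_2$ case treated above, splitting the conjecture into a surjection that holds for formal reasons and an injectivity statement that reduces to linear independence of an explicit family of monomials. First I would establish a surjection $L_0^a(\msp_{2n})\twoheadrightarrow\tilde L_0(\msp_{2n})$ of $\msp_{2n}^a\T t^{-1}\bC[t^{-1}]$-modules, exactly as in the surjection lemma following Conjecture \ref{T}. One grades $F^{(0)}$ by the number of positive indices in a wedge monomial; under the symplectic folding, the action written in the statement --- $(x\T t^i)(w\T t^j)=xw\T t^{i+j}$ only when $j\ge 0$ and $i+j<0$ --- is precisely the grading-raising component $p(x\T t^i)$ of the genuine action of $x\T t^i$ on $L_0(\msp_{2n})$. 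Hence any PBW-homogeneous relation of degree $N$ holding in $L_0^a$ produces a vector whose degree-$N$ part vanishes in $L_0$, and this degree-$N$ part is exactly the image in $\tilde L_0$; the relation therefore also holds in $\tilde L_0$, giving the surjection.

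Next I would look for a monomial basis of $L_0(\msp_{2n})$ analogous to the $ehf$-basis of \cite{FKLMM}: a family of admissible PBW monomials in the loop generators $x\T t^{-l}$ ($x$ ranging over a Chevalley basis of $\msp_{2n}$, $l>0$), cut out by suitable ``no two adjacent boxes'' inequalities, whose applications to the highest weight vector form a basis of $L_0(\msp_{2n})$. Taking symbols, such a family spans the PBW-degeneration $L_0^a(\msp_{2n})$. It then suffices to prove that these admissible monomials remain linearly independent when applied to $|0\rangle$ inside $\tilde L_0$: combined with the surjection just constructed, independence in $\tilde L_0$ forces independence in $L_0^a$, so the same monomials form a basis of both spaces and the surjection is an isomorphism. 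This is precisely the mechanism of the $\msl_2$ lemma and needs no separate knowledge of $\ch L_0^a$.

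To prove the independence in $\tilde L_0$, I would, following the $\msl_2$ computation, write down how each generator $x\T t^{-l}$ shifts the indices of the basis $v_i$ under the identification $V\simeq W\T\bC[t,t^{-1}]$, assign to each admissible monomial $m$ a distinguished ``leading'' semi-infinite wedge $w(m)$ occurring in $m|0\rangle$, and verify that $w(m)$ singles out $m$ among all admissible monomials. As an independent check on the resulting character, one may try the more conceptual route of Lemma \ref{2n}: realize $\msp_{2n}^a$ as the abelian nilradical $\fa$ of the Siegel parabolic of $\msp_{4n}$ --- whose dimension $\binom{2n+1}{2}=n(2n+1)$ matches $\dim\msp_{2n}$ --- verify that the projected symplectic action coincides with the action of $\fa$, and read off $\ch\tilde L_0(\msp_{2n})$ from the wedge realization of $L_0(\msp_{4n})$.

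The hard part, just as for Conjecture \ref{T} in general, is the combinatorics of this basis. Because the symplectic generators are combinations of matrix units paired by the skew form, a single generator $x\T t^{-l}$ shifts several indices simultaneously, so the clean ``distinct generators differ in shift by at least two'' bookkeeping of the $\msl_2$ lemma does not transcribe verbatim, and establishing that the leading wedges $w(m)$ are genuinely pairwise distinct is delicate. Equivalently, one must first produce a symplectic analogue of the $ehf$-admissibility conditions and prove it indexes a basis of $L_0(\msp_{2n})$; identifying the correct inequalities and verifying the leading-wedge separation is the step I expect to be the main obstacle.
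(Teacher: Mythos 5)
The statement you are addressing is Conjecture \ref{sc}: the paper itself gives \emph{no} proof of it, and indeed the authors state (for type $A$, and implicitly here) that the missing ingredient is precisely the character of the PBW-graded module $L_0^a$, which they cannot compute. The only case where the analogous statement is settled in the paper is $\g=\msl_2$, via the $ehf$-basis of \cite{FKLMM}. Your proposal correctly reproduces that two-step mechanism, and the first half is sound: the surjection $L_0^a(\msp_{2n})\twoheadrightarrow\tilde L_0(\msp_{2n})$ does follow from the grading argument of the lemma after Conjecture \ref{T}, since that argument only uses that the true action of $x\T t^{-l}$ decomposes as its $\mgl_\infty^{+,-}$-component (which raises the ``number of positive indices'' grading by exactly one, and which coincides with the action written in the conjecture) plus terms that do not raise the grading; nothing in it is specific to $\msl_n$.

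However, the second half is a genuine gap, and you name it yourself: you do not produce the symplectic analogue of the $ehf$-admissibility conditions, do not prove that the resulting monomials (applied to the highest weight vector, compatibly with the PBW filtration) give a basis of $L_0(\msp_{2n})$, and do not verify the leading-wedge separation that makes the $\msl_2$ independence argument work. As you observe, a single symplectic Chevalley generator moves several wedge factors at once, so the ``shifts differ by at least two'' bookkeeping of the $A_1$ case does not transcribe, and no substitute is offered. Your fallback via a symplectic analogue of Lemma \ref{2n} (embedding $\msp_{2n}^a$ as the abelian nilradical of the Siegel parabolic of $\msp_{4n}$) is a genuinely nice observation, but note what it buys: at best a computation of $\ch\tilde L_0(\msp_{2n})$ from the wedge model. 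The unknown side of the desired equality is $\ch L_0^a(\msp_{2n})$, and no route to it is proposed. So the proposal is a plausible research program whose key step is exactly the open problem; it does not constitute a proof, and the statement remains, as in the paper, a conjecture.
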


The action of the abelian Lie algebra $\msp_{2n}^a\T t^{-1}\bC[t^{-1}]$ as above induces an action of the corresponding infinite-dimensional Lie group
$\exp(\msp_{2n}^a\T t^{-1}\bC[t^{-1}])$.
%which is nothing but the direct sum  of infinitely many copies of the additive groups of the field $\bG_a$
%(one copy for each root of $\msp_{2n}^a\T t^{-1}\bC[t^{-1}]$). We denote this group by $\bG_a^\infty$.
Because of Conjecture \ref{sc} the natural candidate for the degenerate symplectic affine Grassmannian
${\rm Gr}^a(\widehat{\msp}_{2n})$ is the closure of the orbit  of this group through the highest weight vector inside $\bP(\tilde L_0(\msp_{2n}))$.
We denote this closure by ${\rm G}^a(\widehat{\msp}_{2n})$.

Define a symplectic form on the infinite-dimensional space $W\T \bC[t,t^{-1}]$: $\bra v\T t^i,w\T t^j\ket =(v,w)\delta_{i+j,-1}$.
\begin{thm}\label{sp}
The variety ${\rm G}^a(\widehat{\msp}_{2n})$ consists of points $U$ of the Sato Grassmanian $SGr_0$ such that $U$ is isotropic with respect
to the form $\bra\cdot ,\cdot\ket$ and $pr(tU)\subset U$.
\end{thm}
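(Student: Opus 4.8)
The plan is to mirror the proof of Theorem~\ref{sl2}, now for arbitrary $n$. Write $Z\subset{\rm SGr}_0$ for the subvariety cut out by conditions (1) and (2), and let $\mathcal O$ denote the orbit $\exp(\msp_{2n}^a\T t^{-1}\bC[t^{-1}])\cdot|0\rangle$, whose closure \emph{is} ${\rm G}^a(\widehat{\msp}_{2n})$ (note that this closure is defined purely through the semi-infinite wedge realization of $\tilde L_0(\msp_{2n})\subset F^{(0)}$, so the argument does not use Conjecture~\ref{sc}). I would prove the theorem in three stages: first that $\mathcal O\subset Z$, so that $\overline{\mathcal O}\subset Z$ since $Z$ is closed; second that $\mathcal O$ equals the intersection of $Z$ with the big cell of ${\rm SGr}_0$ (the locus where the Pl\"ucker coordinate at $W\T\bC[t]$ is nonzero, equivalently transversality to $W\T t^{-1}\bC[t^{-1}]$), so that $\mathcal O$ is open in $Z$; and third that $Z$ is irreducible, whence $\mathcal O$ is dense and $\overline{\mathcal O}=Z$. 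A simplification at the outset: since $(x\T t^{-l})(x'\T t^{-l'})$ maps $W\T\bC[t]$ first into $W\T t^{-1}\bC[t^{-1}]$ and then to zero, every $X=\sum_{l>0}x_{-l}\T t^{-l}$ satisfies $X^2=0$; hence $\exp(X)=1+X$ and $\mathcal O$ consists exactly of the subspaces $U=(1+X)(W\T\bC[t])$, spanned by $(1+X)(w\T t^j)=w\T t^j+\sum_{l>j}x_{-l}w\T t^{j-l}$ for $w\in W$, $j\ge0$.

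For condition (2), a direct computation gives $pr\bigl(t\,(1+X)(w\T t^j)\bigr)=(1+X)(w\T t^{j+1})$: applying $t$ raises every exponent by one, and $pr$ removes precisely the resulting $t^0$-term (the one coming from $l=j+1$), leaving exactly the defining vector of $U$ with leading term $w\T t^{j+1}$. Thus $pr(tU)\subset U$ on $\mathcal O$, hence on $\overline{\mathcal O}$. For condition (1), the key observation is that each generator $x\T t^{-l}$ with $x\in\msp_{2n}$ is \emph{infinitesimally isotropic}: for $a,b\in W\T\bC[t]$ one has $\bra(x\T t^{-l})a,b\ket+\bra a,(x\T t^{-l})b\ket=0$, since the only surviving contribution comes from a pair of indices $(i,j)$ with $i+j=l-1$ and $0\le i,j<l$, where it equals $(xv,w)+(v,xw)=0$ by the defining relation of $\msp_{2n}$. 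Because $X^2=0$, the cross term $\bra Xa,Xb\ket$ also vanishes (both arguments lie in $W\T t^{-1}\bC[t^{-1}]$, on which $\bra\cdot,\cdot\ket$ is identically zero), so $1+X$ preserves $\bra\cdot,\cdot\ket$ on $W\T\bC[t]$; as that subspace is Lagrangian, every $U\in\mathcal O$ is isotropic.

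For the reverse inclusion, take $U\in Z$ in the big cell and write it as the graph of a linear map $\phi\colon W\T\bC[t]\to W\T t^{-1}\bC[t^{-1}]$, say $\phi(w\T t^j)=\sum_{m<0}\phi_{j,m}(w)\T t^m$ with $\phi_{j,m}\in{\rm End}(W)$. Applying $pr\circ t$ and matching leading terms in the graph, condition (2) forces $\phi_{j+1,m}=\phi_{j,m-1}$, i.e. $\phi_{j,m}$ depends only on $j-m$; setting $x_{-l}:=\phi_{j,\,j-l}$ this says precisely $\phi=X$ for some $X=\sum_{l>0}x_{-l}\T t^{-l}$ with $x_{-l}\in{\rm End}(W)$. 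Evaluating condition (1) on $a=v\T t^i$, $b=w\T t^j$ (for which $\bra a,b\ket=0=\bra Xa,Xb\ket$) then reduces to $(x_{-(i+j+1)}v,w)+(v,x_{-(i+j+1)}w)=0$ for all $i,j\ge0$, i.e. $x_{-l}\in\msp_{2n}$ for every $l\ge1$. Hence $U\in\mathcal O$, so $\mathcal O=Z\cap(\text{big cell})$ is open in $Z$.

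It remains to prove that $Z$ is irreducible, and this is the step I expect to be the main obstacle, since it is glossed over in the $\msl_2$ argument. The natural route, parallel to Lemma~\ref{irr}, is to introduce the finite-dimensional approximations $Z_N=Z\cap\{W\T t^N\bC[t]\subset U\subset W\T t^{-N}\bC[t]\}$ and to resolve each $Z_N$ by an iterated tower of isotropic (symplectic) Grassmann bundles---the symplectic analogue of the tower $R_{N,n}$---whose total space is smooth and irreducible and which projects generically one-to-one onto $Z_N$. Since the isotropic Grassmannians occurring as fibres are themselves irreducible, each $Z_N$ is irreducible; as $Z=\bigcup_N Z_N$ is an increasing union of irreducible varieties all containing $|0\rangle$, the variety $Z$ is irreducible. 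Then $\overline{\mathcal O}\supset\overline{Z\cap(\text{big cell})}=Z$, which together with $\overline{\mathcal O}\subset Z$ completes the proof. The delicate points here are the correct choice of the dimension data along the isotropic tower and the verification of birationality onto $Z_N$; the isotropy constraint (1), absent in the $\mgl_n$ computation, is what must be threaded consistently through the bundle structure.
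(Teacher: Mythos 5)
Your proposal is correct and takes essentially the same route as the paper's own (sketched) proof: the paper likewise identifies the orbit $\exp(\msp_{2n}^a\T t^{-1}\bC[t^{-1}])\cdot\bC|0\ket$ with the intersection of the constraint locus with the big cell $\rm{O}$, introduces the finite-dimensional pieces ${\rm G}_N^a(\widehat{\msp}_{2n})$ cut out by $W\T t^N\bC[t]\subset U\subset W\T t^{-N}\bC[t]$ together with the isotropy and $pr(tU)\subset U$ conditions, identifies them with closures of the finite-dimensional orbits, and reduces everything to their irreducibility, established by the symplectic analogue of the desingularization tower of Lemma \ref{irr} and Section \ref{resolution} (cf. \cite{FFiL}, Definition 5.1). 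The step you flag as the main obstacle is precisely the one the paper also leaves at the level of a reference to that construction, so your argument matches the paper's in both structure and level of detail.
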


\begin{conj}
${\rm Gr}^a(\widehat{\msp}_{2n})\simeq {\rm G}^a(\widehat{\msp}_{2n})$.
\end{conj}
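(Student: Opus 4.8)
The plan is to follow the proof of Theorem~\ref{sl2} (the case $n=1$, in which $\msp_2=\msl_2$), replacing the two-dimensional computations by their $2n$-dimensional symplectic analogues; note that this does not require Conjecture~\ref{sc}, since ${\rm G}^a(\widehat{\msp}_{2n})$ is defined directly as an orbit closure in $\bP(\tilde L_0(\msp_{2n}))$. First I would describe the open orbit explicitly. Each operator $x\T t^{-l}$ with $x\in\msp_{2n}$, $l>0$, sends $W\T\bC[t]$ into $W\T t^{-1}\bC[t^{-1}]$ and annihilates $W\T t^{-1}\bC[t^{-1}]$, so the whole abelian algebra acts on $V$ by square-zero operators whose image lies in $W\T t^{-1}\bC[t^{-1}]$. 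Consequently a group element $g=\exp\bigl(\sum_{l\ge 1}x_l\T t^{-l}\bigr)$ acts on $V$ as $1+A$, where $A=\sum_{l\ge1}x_l\T t^{-l}$ satisfies $A^2=0$, so $g\cdot|0\ket$ is the subspace $U_A=(1+A)(W\T\bC[t])$, spanned by the vectors $w_j\T t^i+\sum_{l>i}x_lw_j\T t^{i-l}$, $i\ge 0$ — the exact analogue of the list in the proof of Lemma~\ref{lc}.

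Second I would show that every $U_A$ lies in the claimed set $S$. Both $W\T\bC[t]$ and $W\T t^{-1}\bC[t^{-1}]$ are isotropic for $\bra\cdot,\cdot\ket$, so by bilinearity isotropy of $U_A$ reduces to the vanishing of $\bra Au,u'\ket+\bra u,Au'\ket$ on $W\T\bC[t]$; this expression is a sum of terms $(x_Lv,w)+(v,x_Lw)$ with $L=i+j+1$, which vanish precisely because $x_L\in\msp_{2n}$. For the projection condition one computes directly that $pr\bigl(t(1+A)(w_j\T t^i)\bigr)=(1+A)(w_j\T t^{i+1})\in U_A$, the term removed by $pr$ being exactly the contribution of $l=i+1$. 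As both conditions are closed, they pass to the closure, giving ${\rm G}^a(\widehat{\msp}_{2n})\subset S$.

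Third I would prove the reverse inclusion on the big cell $\Omega\subset{\rm SGr}_0$ of subspaces transversal to $W\T t^{-1}\bC[t^{-1}]$. For $U\in S\cap\Omega$, write $U=(1+B)(W\T\bC[t])$; the condition $pr(tU)\subset U$ is the $\hbar=0$ specialization of~\eqref{Aki} and forces the block-Hankel form $B(w\T t^i)=\sum_{l>i}\phi_l w\T t^{i-l}$ for operators $\phi_l\in{\rm End}(W)$, while isotropy then forces $(\phi_Lv,w)+(v,\phi_Lw)=0$, i.e.\ $\phi_L\in\msp_{2n}$, for every $L$. Hence $U=U_A$ lies in the orbit, so the open orbit is precisely $S\cap\Omega$.

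It now remains to conclude that $\overline{S\cap\Omega}=S$. Since $S\cap\Omega$ is a nonempty open subset of $S$ and $\overline{S\cap\Omega}\subset S$ by the previous steps, this amounts to proving that $S$ is irreducible, and this is the main obstacle. I would establish it by adapting the resolution of Lemma~\ref{irr}: one builds a tower of fibrations over a point approximating the finite piece $S_N=S\cap\{W\T t^N\bC[t]\subset U\subset W\T t^{-N}\bC[t]\}$, but now with fibers given by isotropic (Lagrangian) Grassmannians of the relevant symplectic subquotients instead of ordinary Grassmannians. Each such fiber is irreducible, so the total space is irreducible and maps generically one-to-one onto $S_N$; passing to the limit $N\to\infty$ yields irreducibility of $S$ and hence ${\rm G}^a(\widehat{\msp}_{2n})=S$. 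The delicate point is to verify that $\bra\cdot,\cdot\ket$ descends nondegenerately to each subquotient appearing in the tower, so that the successive fibers are genuine isotropic Grassmannians of constant dimension; this is exactly where the symplectic nature of the $\phi_L$, as opposed to the general linear case of Lemma~\ref{irr}, must be used.
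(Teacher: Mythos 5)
There is a genuine gap, and it is the central one: your argument proves the wrong statement. What you establish (modulo the details of the tower of Lagrangian-Grassmannian fibrations) is that ${\rm G}^a(\widehat{\msp}_{2n})$ coincides with the explicit subvariety $S\subset{\rm SGr}_0$ of isotropic subspaces $U$ with $pr(tU)\subset U$ --- but that is precisely Theorem \ref{sp} of the paper, which the paper already proves by exactly the route you describe (orbit computation on the big cell, closedness of the two conditions, and irreducibility via the desingularization of Lemma \ref{irr} and Section \ref{resolution}). The conjecture you were asked to prove is a different assertion: that ${\rm Gr}^a(\widehat{\msp}_{2n})\simeq {\rm G}^a(\widehat{\msp}_{2n})$, where ${\rm Gr}^a(\widehat{\msp}_{2n})$ is defined by the general Definition \ref{dafv}, i.e.\ as the closure of the $\exp(\msp_{2n}^a\T t^{-1}\bC[t^{-1}])$-orbit of the highest weight line inside $\bP(L_0^a(\msp_{2n}))$, the projectivization of the \emph{PBW-graded} basic representation. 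Your proposal never touches $L_0^a(\msp_{2n})$ at all, so it produces no map, in either direction, between $\bP(L_0^a)$ and $\bP(\tilde L_0)$, and hence no comparison of the two orbit closures.

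Your opening claim that the argument ``does not require Conjecture \ref{sc}'' is exactly backwards and is the source of the error: relating $L_0^a(\msp_{2n})$ to $\tilde L_0(\msp_{2n})$ is the entire content of the conjecture, and Conjecture \ref{sc} (the module isomorphism $\tilde L_0\simeq L_0^a$ sending $|0\rangle\mapsto l_0$) is precisely the ingredient that would make the two orbit closures tautologically isomorphic. This is how the paper handles the only case it can settle, $\g=\msl_2$: before any geometry, it proves the module isomorphism (Conjecture \ref{T} for $\msl_2$) by taking the FKLMM basis of $ehf$-monomials of $L_0(\msl_2)$ and showing its image stays linearly independent in $\tilde L_0(\msl_2)$ via a leading-wedge-term argument; only then does Theorem \ref{sl2} identify $\Gr^a(\msl_2)$ with the explicit subvariety of ${\rm SGr}_0$. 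To prove the symplectic conjecture along the same lines you would need a symplectic analogue of that monomial basis of $L_0(\msp_{2n})$, compatible with the PBW filtration, together with a proof of linear independence of its image in the wedge-space model --- and no such basis argument appears in your proposal (or in the paper, which is why the statement is left as a conjecture there). In general one also knows only a surjection $L_0^a\twoheadrightarrow\tilde L_0$ (the analogue of the lemma in Section \ref{tilde}), so a character identity or basis count is genuinely needed; it cannot be bypassed by geometry on the $\tilde L_0$ side alone.
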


We sketch the proof of Theorem \ref{sp}.
Let $\rm{O}\subset SGr_0$ be the subvariety of spaces that intersect trivially with $W\T t^{-1}\bC[t^{-1}]$
(i.e. the Pl\"ucker coordinate corresponding to $W\T\bC[t]$ does not vanish).
\begin{prop}
The $\exp(\msp_{2n}\T t^{-1}\bC[t^{-1}])\cdot \bC |0\ket$ orbit coincides with $\rm{O}\cap \overline{{\rm G}^a(\widehat{\msp}_{2n})}$.
\end{prop}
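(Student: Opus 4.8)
The plan is to exploit the extreme degeneracy of the abelian action. On $V=W\T\bC[t,t^{-1}]$ every element $\xi=\sum_{j>0}x_j\T t^{-j}$ of $\msp_{2n}^a\T t^{-1}\bC[t^{-1}]$ acts by an operator that sends $W\T\bC[t]$ into $W\T t^{-1}\bC[t^{-1}]$ and annihilates $W\T t^{-1}\bC[t^{-1}]$; hence $\xi^2=0$ on $V$ and $\exp(\xi)=1+\xi$. The group is therefore literally $\{1+\xi\}$ with $(1+\xi)(1+\eta)=1+\xi+\eta$, and the point $\exp(\xi)\cdot\bC|0\ket$ of $\rm{SGr}_0$ is exactly the subspace $(1+\xi)(W\T\bC[t])$, i.e. the graph of the linear map $\xi|_{W\T\bC[t]}\colon W\T\bC[t]\to W\T t^{-1}\bC[t^{-1}]$.

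First I would record the easy inclusion. Each orbit point, being a graph over $W\T\bC[t]$, is transversal to $W\T t^{-1}\bC[t^{-1}]$, so its Pl\"ucker coordinate along $W\T\bC[t]$ is nonzero and the orbit lies in $\rm{O}$; it lies in ${\rm G}^a(\widehat{\msp}_{2n})$ by definition, this being the closure of the orbit. Thus the orbit is contained in $\rm{O}\cap{\rm G}^a(\widehat{\msp}_{2n})$.

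The substance is the reverse inclusion, and the explicit form makes it formal. Parametrize the cell $\rm{O}$ by the coordinates $(A_{k,i})$, $k\ge 1$, $i\ge 0$, $A_{k,i}\in{\rm End}(W)$, as in the Flatness subsection, so that $U$ is spanned by the vectors $w\T t^i+\sum_{k\ge 1}A_{k,i}w\T t^{-k}$. Writing $\xi=\sum_{j>0}x_j\T t^{-j}$ one computes $A_{k,i}=x_{i+k}$, so, writing $L$ for the orbit, $L$ is cut out in these coordinates by the linear block-Hankel conditions $A_{k,i}=A_{k-1,i+1}$ (the $\hbar=0$ conditions of the Remark after the Flatness lemma, i.e. $pr(tU)\subset U$) together with $A_{k,i}\in\msp_{2n}$ (the isotropy condition). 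In particular $L$ is a linear subspace of the affine cell $\rm{O}$, hence closed in $\rm{O}$. Now $\rm{O}$ is open in $\rm{SGr}_0$, and for a subset closed in an open set one has $\overline{L}\cap\rm{O}=L$, the closure being taken in $\rm{SGr}_0$; since ${\rm G}^a(\widehat{\msp}_{2n})=\overline{L}$ this gives $\rm{O}\cap{\rm G}^a(\widehat{\msp}_{2n})=L$, as desired.

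I expect the main obstacle to be purely in making these topological manipulations rigorous in the ind-scheme setting, since $\rm{SGr}_0$, the cell $\rm{O}$ and the orbit $L$ are all infinite-dimensional and $L$ is spanned by finitely supported $\xi$. The remedy is to intersect everything with the truncations $W\T t^N\bC[t]\subset U\subset W\T t^{-N}\bC[t]$ used to define ${\rm Gr}^a_N(\mgl_n)$ in Section 2, to check that at each level the orbit map restricts to a linear map of genuine finite-dimensional affine spaces so that $L$ is cut out by linear equations and is closed, and finally to verify that closures are compatible with the inductive limit before passing to it. A secondary point worth checking is that the action on $\bP(\tilde L_0(\msp_{2n}))$ is indeed induced by $1+\xi$ on $V$; this follows from $\xi^2=0$ together with the fact that $\xi$ acts on $F^{(0)}$ as the derivation extending its action on $V$.
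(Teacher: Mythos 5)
Your proof is correct, and it is essentially the argument the paper itself relies on: the paper actually states this proposition without proof (it sits inside the sketch of Theorem~\ref{sp}), but the parallel statements — Proposition~\ref{group} for $\mgl_n$ and Lemma~\ref{lc} with its corollary for $\msl_2$ — are proved by exactly your route of computing the orbit explicitly and identifying it with the locus in the big cell cut out by the defining linear conditions. Your packaging is a clean, complete version of that: $\xi^2=0$ makes orbit points the graphs $(1+\xi)(W\T\bC[t])$, the cell coordinates turn membership in the orbit into the block-Hankel equations $A_{k,i}=A_{k-1,i+1}$ plus isotropy (the latter reading as $x_m\in\msp_{2n}$ once Hankel holds), finiteness of the nonzero $A_{k,i}$ for Sato points guarantees $\xi$ is a genuine Laurent polynomial, and the closed-in-open topological step (checked on the truncations $W\T t^N\bC[t]\subset U\subset W\T t^{-N}\bC[t]$, as you indicate) finishes the reverse inclusion.
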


In order to prove Theorem \ref{sp} we consider the finitization of ${\rm G}^a(\widehat{\msp}_{2n})$, thus making explicit the
ind-variety structure.
\begin{dfn}
For $N\ge 0$, let ${\rm G}_N^a(\widehat{\msp}_{2n})$  be the finite-dimensional subvariety of ${\rm G}^a(\widehat{\msp}_{2n})$
consisting of subspaces $U$ such that
\begin{enumerate}
\item $W\T t^N\bC[t]\subset U\subset W\T t^{-N}\bC[t]$,
\item $U$ is isotropic,
\item $pr(tU)\subset U$.
\end{enumerate}
\end{dfn}

\begin{lem}
${\rm G}_N^a(\widehat{\msp}_{2n})$ coincides with the closure of the orbit of the group $\exp(\msp_{2n}^a\T {\rm span}(t^{-1},\dots,t^{-N}))$
through the line spanned by $|0\ket$.
In particular, ${\rm G}_N^a(\widehat{\msp}_{2n})$ are irreducible and of dimension $N\dim\msp_{2n}$.
\end{lem}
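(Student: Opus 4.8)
The plan is to follow the strategy of Lemma \ref{irr} and Proposition \ref{group}: make the orbit explicit, show it is exactly the open part of ${\rm G}_N^a(\widehat{\msp}_{2n})$ on which one Pl\"ucker coordinate is nonzero, and then prove irreducibility of the ambient variety by a symplectic tower of fibrations, so that this open orbit is dense.

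Write $H_N=\exp(\msp_{2n}^a\T\mathrm{span}(t^{-1},\dots,t^{-N}))$ and let $\mathcal O_N=H_N\cdot\bC|0\ket$. First I would linearize the orbit. Since $\msp_{2n}^a\T t^{-1}\bC[t^{-1}]$ is abelian and every $x\T t^{-i}$ sends $W\T\bC[t]$ into $W\T t^{-1}\bC[t^{-1}]$ while annihilating $W\T t^{-1}\bC[t^{-1}]$, the product of any two generators is zero, so for $a=\sum_{i=1}^N x_i\T t^{-i}$ one has $\exp(a)=1+a$. Hence $U=\exp(a)\cdot|0\ket$ is spanned by $W\T t^N\bC[t]$ together with
\[
g_k(w)=w\T t^k+\sum_{i=k+1}^N x_iw\T t^{k-i},\qquad 0\le k<N,\ w\in W.
\]
Conditions (1) and (3) are then immediate, since $pr\,t$ sends $g_k(w)$ to $g_{k+1}(w)$ (respectively into $W\T t^N\bC[t]$). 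For (2) I would use that $x\T t^{-i}$ is skew-adjoint for $\bra\cdot,\cdot\ket$ when $x\in\msp_{2n}$, and that both $W\T\bC[t]$ and $W\T t^{-1}\bC[t^{-1}]$ are isotropic; expanding $\bra(1+a)u,(1+a)u'\ket$ then gives $0$. As (1)--(3) are closed conditions, this yields $\ov{\mathcal O_N}\subseteq {\rm G}_N^a(\widehat{\msp}_{2n})$.

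Next I would identify the orbit with the open part. If $U\in {\rm G}_N^a(\widehat{\msp}_{2n})$ has nonvanishing Pl\"ucker coordinate indexed by $W\T\bC[t]$, it has a unique reduced spanning set $g_k(w)=w\T t^k+(\text{terms in }W\T t^{-1}\bC[t^{-1}])$. Iterating condition (3) and comparing leading terms forces the correction coefficient at shift $i$ to be independent of $k$ (the block-Hankel phenomenon of the $\hbar=0$ case in the proof of Proposition \ref{flatn}), giving the form above with $x_i\in\mathrm{End}(W)$. A direct computation, using $i\ge k+1$ and $i'\ge l+1$ to kill the cross terms, gives
\[
\bra g_k(w),g_l(w')\ket=(x_{k+l+1}w,w')+(w,x_{k+l+1}w'),
\]
so condition (2) is equivalent to $x_s\in\msp_{2n}$ for all $s=1,\dots,N$. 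Thus $\mathcal O_N$ is precisely the open subset of ${\rm G}_N^a(\widehat{\msp}_{2n})$ where this Pl\"ucker coordinate is nonzero; and since the $x_i$ are recovered from the reduced generators, the orbit map of $H_N$ is injective, so $\mathcal O_N\simeq\BA^{N\dim\msp_{2n}}$ is irreducible of dimension $N\dim\msp_{2n}$.

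It remains to prove that ${\rm G}_N^a(\widehat{\msp}_{2n})$ is irreducible: then the open orbit is dense, ${\rm G}_N^a(\widehat{\msp}_{2n})=\ov{\mathcal O_N}$, and the dimension is $N\dim\msp_{2n}$. For this I would construct a symplectic analogue of the resolution $R_{N,n}$ of Lemma \ref{irr}. On $S_{N,2n}=\frac{W\T t^{-N}\bC[t]}{W\T t^N\bC[t]}$ the operator $pr\,t$ is nilpotent with $(pr\,t)^N=0$; set $S_{N,2n}(k)=\mathrm{Im}(pr\,t)^k$. Let $\tilde R_N$ parametrize towers $(U_0,\dots,U_{N-1})$ with $U_k\subset S_{N,2n}(k)$ isotropic of dimension $2n(N-k)$ and $pr\,t\,U_k\subset U_{k+1}$; the projection $U_0\mapsto$ onto $S_{N,2n}$ maps $\tilde R_N$ onto ${\rm G}_N^a(\widehat{\msp}_{2n})$ and is one-to-one over the locus where the ranks of $(pr\,t)^k$ are maximal, i.e. over $\mathcal O_N$, exactly as in Lemma \ref{irr}. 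The main obstacle is precisely this symplectic bookkeeping: one must check that $\bra\cdot,\cdot\ket$ induces a nondegenerate form on the relevant subquotients, so that $\tilde R_N$ is a tower of $N$ Lagrangian-Grassmannian fibrations with fibre $LG(2n,4n)$. Granting this, $\tilde R_N$ is smooth and irreducible of dimension $N\cdot\dim LG(2n,4n)=N\cdot n(2n+1)=N\dim\msp_{2n}$, whence its image ${\rm G}_N^a(\widehat{\msp}_{2n})$ is irreducible of the same dimension, completing the proof.
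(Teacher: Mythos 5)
Your first two steps are correct, and they follow exactly the route the paper takes (they are the symplectic analogues of Lemma \ref{GGr}, Proposition \ref{group}, and of the lemma on ${\rm G}_N^a(\widehat{\msp}_{2n})\cap O$ that follows the statement in question): since each $x\T t^{-i}$ maps $W\T\bC[t]$ into $W\T t^{-1}\bC[t^{-1}]$ and kills the latter, $\exp(a)=1+a$; condition (3) plus transversality forces the block-Hankel shape on the big cell; and isotropy of the resulting span is equivalent to $(x_sw,w')+(w,x_sw')=0$, i.e. $x_s\in\msp_{2n}$, so the orbit is precisely the open cell and is an affine space of dimension $N\dim\msp_{2n}$.

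The gap is in the irreducibility step, and it sits exactly at the point you flagged and then ``granted'': the claim you need is false as stated. The form $\bra\cdot,\cdot\ket$ does \emph{not} induce a nondegenerate form on $S_{N,2n}(k)={\rm Im}(pr\,t)^k$. Indeed, $S_{N,2n}(k)$ is spanned by the $w\T t^j$ with $j\in[-N+k,-1]\cup[k,N-1]$, and $w\T t^i$ pairs nontrivially only with $W\T t^{-1-i}$; for $i\ge k$ the partner has degree $-1-i\le-1-k$, which lies in $S_{N,2n}(k)$ only if $i\le N-1-k$. Hence the restriction of $\bra\cdot,\cdot\ket$ to $S_{N,2n}(k)$ is degenerate for every $k\ge1$, and is \emph{identically zero} as soon as $2k\ge N$. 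In particular the top floor of your tower, the choice of $U_{N-1}\subset S_{N,2n}(N-1)=W\T t^{-1}\oplus W\T t^{N-1}$, is completely unconstrained by isotropy: it is a full Grassmannian $\Gr(2n,4n)$ of dimension $4n^2$, strictly larger than $\dim {\rm LG}(2n,4n)=n(2n+1)=\dim\msp_{2n}$. So $\tilde R_N$ is not a tower of Lagrangian-Grassmannian fibrations, its dimension exceeds $N\dim\msp_{2n}$, and it cannot be an irreducible variety mapping generically one-to-one onto ${\rm G}_N^a(\widehat{\msp}_{2n})$. There is a second, related defect: you dropped the chain condition $U_{k+1}\subset U_k$ (condition (1) in section \ref{resolution}, and implicitly needed already in Lemma \ref{irr}); without it, $pr\,t\,U_0\subset U_1$ does not imply $pr\,t\,U_0\subset U_0$, so the image of your projection is not even contained in ${\rm G}_N^a(\widehat{\msp}_{2n})$. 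A correct symplectic desingularization must keep the chain condition and impose isotropy of the intermediate spaces with respect to the ambient symplectic form on $S_{N,2n}$ (note that over the open orbit one has $(pr\,t)^kU_0\subset U_0$, so this is the condition that actually holds there), or more precisely with respect to the suitably induced forms on subquotients as in \cite{FFiL}, Definition 5.1; setting this up is the genuine content of the irreducibility proof, not a routine verification that can be granted.
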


\begin{lem}
${\rm G}_N^a(\widehat{\msp}_{2n})\cap O = \exp(\msp_{2n}^a\T {\rm span}(t^{-1},\dots,t^{-N}))\cdot\bC |0\ket\cap O$.
\end{lem}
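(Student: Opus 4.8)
The plan is to run an explicit computation in the big cell $O$, exactly parallel to the proof of Proposition \ref{group} and to the $\msl_2$ computation in Lemma \ref{lc}. A subspace $U\in O$ meets $W\T t^{-1}\bC[t^{-1}]$ trivially, so it is the graph of a linear map $\phi\colon W\T\bC[t]\to W\T t^{-1}\bC[t^{-1}]$. Writing $\phi(w\T t^j)=\sum_{l\ge 1}A_{l,j}w\T t^{-l}$ with $A_{l,j}\in\mathrm{End}(W)$ (as in the Flatness subsection), the space $U$ is spanned by the vectors $w\T t^j+\sum_{l\ge 1}A_{l,j}w\T t^{-l}$ for $j\ge 0$. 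The first step is to translate the three defining conditions of ${\rm G}_N^a(\widehat{\msp}_{2n})$ into conditions on the operators $A_{l,j}$, and then to match the resulting subspaces with the orbit.

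Boundedness $W\T t^N\bC[t]\subset U\subset W\T t^{-N}\bC[t]$ becomes $A_{l,j}=0$ whenever $l>N$ or $j\ge N$. For the invariance $pr(tU)\subset U$, I multiply a spanning vector by $t$ and delete its degree-$0$ component (which is the $l=1$ term), obtaining $w\T t^{j+1}+\sum_{l\ge 2}A_{l,j}w\T t^{1-l}$; since $U$ is a graph this must be the graph element over $w\T t^{j+1}$, which forces the block-Hankel relations $A_{l+1,j}=A_{l,j+1}$ (precisely the $\hbar=0$ specialization of \eqref{Aki}). Thus $A_{l,j}$ depends only on $s=l+j$, and I write $A_{l,j}=B_s\in\mathrm{End}(W)$, supported in $1\le s\le N$. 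Finally, for isotropy I compute $\bra u,u'\ket$ for two spanning vectors $u$ over $w\T t^j$ and $u'$ over $w'\T t^{j'}$: the only pairings of $t$-degrees summing to $-1$ give $\bra u,u'\ket=(w,B_{j+j'+1}w')+(B_{j+j'+1}w,w')$, so isotropy is equivalent to $(w,B_s w')+(B_s w,w')=0$ for all $w,w'\in W$ and all $s$, i.e. to $B_s\in\msp_{2n}$ for every $s$. Hence $U\in {\rm G}_N^a(\widehat{\msp}_{2n})\cap O$ if and only if $\phi(w\T t^j)=\sum_{s=j+1}^N B_s w\T t^{j-s}$ with each $B_s\in\msp_{2n}$.

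On the orbit side, since $\msp_{2n}^a\T\spa(t^{-1},\dots,t^{-N})$ is abelian and each generator $x\T t^{-i}$ satisfies $(x\T t^{-i})^2=0$, while any product of two generators annihilates $W\T\bC[t]$ (one application already lands in negative $t$-degree, where the action vanishes), the exponential collapses on $W\T\bC[t]$ to $\exp(\sum_{i=1}^N x_i\T t^{-i})(w\T t^j)=w\T t^j+\sum_{i>j}x_i w\T t^{j-i}$. Thus the orbit $\exp(\msp_{2n}^a\T\spa(t^{-1},\dots,t^{-N}))\cdot\bC|0\ket$ consists exactly of the graphs with $B_s=x_s\in\msp_{2n}$, and every orbit point lies in $O$ since its Plücker coordinate along $W\T\bC[t]$ equals $1$. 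Matching this with the characterization of the previous paragraph gives ${\rm G}_N^a(\widehat{\msp}_{2n})\cap O=\exp(\msp_{2n}^a\T\spa(t^{-1},\dots,t^{-N}))\cdot\bC|0\ket$, and since the orbit is already contained in $O$ this equals $\exp(\msp_{2n}^a\T\spa(t^{-1},\dots,t^{-N}))\cdot\bC|0\ket\cap O$, as claimed. The degree bookkeeping is routine; the one genuinely symplectic point, and the step I would check most carefully, is the last equivalence in the isotropy computation, namely that $(w,Bw')+(Bw,w')\equiv 0$ cuts out exactly $\msp_{2n}\subset\mathrm{End}(W)$ — this is what forces the coefficients of an isotropic graph to be symplectic and so makes the two descriptions coincide.
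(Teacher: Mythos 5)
Your proposal is correct, and in fact the paper offers no proof of this lemma at all — it is stated as part of the explicitly sketched argument for Theorem \ref{sp} — so your write-up fills a gap rather than diverging from anything. Moreover, your method (big-cell graph coordinates $A_{l,j}$, the block-Hankel structure forced by $pr(tU)\subset U$, isotropy forcing the Hankel coefficients $B_s$ into $\msp_{2n}$, and the collapse of the exponential to $1+X$ on $W\T\bC[t]$) is precisely the technique the paper uses for its parallel statements (Lemma \ref{GGr}, Proposition \ref{group}, Lemma \ref{lc}, and the coordinates of the flatness section), so it is essentially the intended argument.
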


Now it remains to prove the irreducibility of the varieties ${\rm G}_N^a(\widehat{\msp}_{2n})$.
This is achieved by constructing explicitly the desingularization via the same procedure as in
Lemma \ref{irr} and section \ref{resolution} (see also \cite{FFiL}, Definition 5.1).

\section*{Acknowledgments}
Thanks are due to L.~Positselski for his explanations about the notion of
flatness. We are very grateful to B.~Feigin for extremely fruitful discussions
of degenerate affine Grassmannians.
The work of E.F. was partially supported
by the Dynasty Foundation and by the Simons foundation. 
The work of EF was supported within the framework of a subsidy granted to the HSE 
by the Government of the Russian Federation for the implementation of the Global Competitiveness Program.
The research of M.F. was carried out at the IITP RAS at the expense of the 
Russian Foundation for Sciences (project no. 14-50-00150).

\end{document}